\theoremstyle{plain}
\newtheorem{theorem}{Theorem}[section]
\newtheorem{lemma}[theorem]{Lemma}
\theoremstyle{remark}
\newtheorem*{example}{Example}
\begin{document}

\begin{frontmatter}
\title{A Proof of consistency and model-selection optimality\\ on the empirical Bayes method}
\runtitle{Consistency and optimality of the parametric empirical Bayes}

\begin{aug}
\author[A]{\fnms{Dye SK}~\snm{Sato}\ead[label=e1]{sato.daisuke.44p@st.kyoto-u.ac.jp}\orcid{0000-0001-8809-9268}}
\and
\author[A]{\fnms{Yukitoshi}~\snm{Fukahata}\ead[label=e2]{fukahata.yukitoshi.3e@kyoto-u.ac.jp}}
\address[A]{Disaster Prevention Research Institute, Kyoto University, Gokasho, Uji, Kyoto 611-0011, Japan\printead[presep={,\ }]{e1,e2}}
\end{aug}

\begin{abstract}

We study the consistency and optimality of the maximum marginal likelihood estimate (MMLE) in the hyperparameter inference for large-degree-of-freedom models. 
We perform main analyses within the exponential family, where the natural parameters are hyperparameters. 
First, we prove the consistency of the MMLE for the general linear models when estimating the scales of variance in the likelihood and prior. 
The proof is independent of the number ratio of data to model parameters and excepts the ill-posedness of the associated regularized least-square model-parameter estimate that is shown asymptotically unbiased. 
Second, we generalize the proof to other models with a finite number of hyperparameters. 
We find that the extensive properties of cost functions in the exponential family generally yield the consistency of the MMLE for the likelihood hyperparameters. 
Besides, we show the MMLE asymptotically almost surely minimizes the Kullback-Leibler divergence between the prior and true predictive distributions even if the true data distribution is outside the model space, under the hypothetical asymptotic normality of the predictive distributions applicable to non-exponential model families. 
Our proof validates the empirical Bayes method using the hyperparameter MMLE in the asymptotics of many model parameters, ensuring the same qualification for the empirical-cross-entropy cross-validation. 
\end{abstract}

\begin{keyword}[class=MSC]
\kwd[Primary ]{60B10}
\kwd{62H10}
\kwd[; secondary ]{62G05}
\end{keyword}

\begin{keyword}
\kwd{Empirical Bayes method}
\kwd{consistency}
\end{keyword}

\end{frontmatter}

\section{Introduction}

Explaining data with a statistical model could necessitate the validation of the statistical model itself~\citep{akaike1980likelihood}. 
Besides the parameters of the statistical model (model parameters), Bayesian hierarchical modelling~\citep{gelman2013bayesian} introduces the parameters that specify the structure of the statistical model (hyperparameters), enabling the estimation of an appropriate model structure from the data. 
Large-degree-of-freedom models containing a large number of model parameters also learn the model structure from the data in a model-parameter space of high dimension comparable to or larger than the number of data, where the hyperparameters often appear in the prior to solve the instability problem in the likelihood function with large degrees of freedom. 
The hierarchical parameter space raises a new need for meta-analysis~\citep{cooper2019handbook} to select a reasonable statistical model from the model family the hyperparameters represent. 

The validity of the statistical model is often measured by the marginal likelihood of the hyperparameters, integrating the product of the likelihood and prior in the model-parameter space~\citep{good1965estimation}. 
The maximum marginal likelihood estimate (MMLE) thus has a wide range of application, with various names of the marginal likelihood of the hyperparameters, such as the type II likelihood~\citep{good1965estimation}, likelihood of a model~\citep{akaike1980likelihood}, and model evidence~\citep{bishop2006pattern}, which is also the empirical cross entropy~\citep{friedman2001elements} for one data set. 
The marginal likelihood of the hyperparameters also describes the probability of expected data for given hyperparameters, termed the prior predictive distribution~\citep{gelman2013bayesian}. 
The MMLE with a number correction of the hyperparameters is then proposed, called Akaike's Bayesian information criterion~\citep[ABIC;][]{akaike1980likelihood},  
aiming the minimum Kullback-Leibler divergence of the prior predictive from the true data distribution. 

On the other, the use of the MMLE is also called the empirical Bayes method~\citep{maritz2018empirical,robbins1992empirical} often criticized for its point estimation nature~\citep{carlin2008bayesian}. 
The censure may also be directed to the rationale of the MMLE. 
In the dimension decision of non-hierarchical models, Akaike's information criterion~\citep[AIC;][]{akaike1974new} is proposed for the minimum Kullback-Leibler divergence between the prior and true predictive distributions~\citep[model-selection efficiency;][]{shibata1981optimal} similarly to ABIC and fails to earn the convergence in probability to the true value (consistency), leading to Bayesian information criterion~\citep[BIC;][]{schwarz1978estimating}. BIC is an approximated MMLE for the model dimension decision, shown consistent yet less optimal than AIC for the Kullback-Leibler minimization in data prediction~\citep{shibata1981optimal,yang2005can}. 
For infinite-degree-of-freedom models with a fixed number of hyperparameters, ABIC and the MMLE are respectively the counterparts of AIC and BIC but are identical, obscuring theoretical explanations on how the MMLE performs in the data predictability and knowledge acquisition on the true hyperparameters. 
In this study, primarily within the exponential family, 
we prove the consistency of the MMLE for the hyperparameter inference in models with model parameters comparable to or greater than data, where we further see the consistency can be compatible with the model-selection efficiency. 

\section{Problem setting and preliminary}
In this section, we define the inversion model we consider and introduce several relations useful for the proof of consistency. 
\subsection{Model family and selection}
We conduct the analysis in the exponential family~\citep{barndorff2014information} except in \S\ref{sec:gen_efficiency_app}. 
Suppose inverting given data ${\bf d}\in\mathbb{R}^N$ to the following probabilistic model: 
\begin{equation}
\begin{aligned}
    P({\bf d}|{\bf a};\boldsymbol\beta_{\rm d})&\simeq \exp[-\boldsymbol\beta_{\rm d}^{\rm T} {\bf U}({\bf d},{\bf a})+F_{\rm d}(\boldsymbol\beta_{\rm d})]
    \\
    P({\bf a};\boldsymbol\beta_{\rm a})&= \exp[-\boldsymbol\beta_{\rm a} ^{\rm T}{\bf V}({\bf a})+F_{\rm a}(\boldsymbol\beta_{\rm a})]
\end{aligned}
\label{eq:modeldefinition}
\end{equation}
where ${\bf a}\in\mathbb{R}^M$ denotes the model parameters, 
$\boldsymbol\beta_{\rm d}\in (0,\infty)^{n_{\rm d}}$ and $\boldsymbol\beta_{\rm a}\in (0,\infty)^{n_{\rm a}}$ the hyperparameters of $
P({\bf d}|{\bf a};\boldsymbol\beta_{\rm d})$ and $P({\bf a};\boldsymbol\beta_{\rm a})$, which are respectively weighted by the cost functions 
${\bf U}({\bf d},{\bf a})\in [0,\infty)^{n_{\rm d}}$ and ${\bf V}({\bf a})\in [0,\infty)^{n_{\rm a}}$ and normalized by $F_{\rm a}(\boldsymbol\beta_{\rm a})$ and $F_{\rm d}(\boldsymbol\beta_{\rm d})$, 
and ${}^{\rm T}$ represents the transpose. 
We presume $U_i$ and $V_i$ may take more than two values to eliminate the possibility of model degeneracy. 
Equations~(\ref{eq:modeldefinition}) with unfixed $\boldsymbol\beta$ [$:=(\boldsymbol\beta_{\rm d},\boldsymbol\beta_{\rm a})^{\rm T}$] provide a variety of the probabilistic models, which constitute a model family termed the exponential family~\citep{gelman2013bayesian}. 
This modeling redundancy enables us to select an appropriate model of ${\bf d}$ and ${\bf a}$ as meta-inference of the hyperparameters $\boldsymbol\beta$ (natural parameters). 
When considering the prior as a stochastic model-parameter generator ${\bf a}\sim P({\bf a};\boldsymbol\beta_{\rm a})$, the above probabilistic model describes the data generation by 
$P({\bf d};\boldsymbol\beta)$~\citep[the prior predictive distribution;][]{gelman2013bayesian}: 
\begin{equation}
    P({\bf d};\boldsymbol\beta)=\int d{\bf a}P({\bf d}|{\bf a};\boldsymbol\beta_{\rm d})P({\bf a};\boldsymbol\beta_{\rm a}).
\end{equation}

Hereafter, our interest is in approximating $P({\bf d};\boldsymbol\beta)$ to the true data distribution $Q({\bf d})$. 
$Q({\bf d})$ necessarily exists as the original data generator [${\bf d}\sim Q({\bf d})$], although not always within the model family of $P({\bf d};\boldsymbol\beta)$. 
When there is a true hyperparameter vector $\boldsymbol\beta_{\rm 0}=(\boldsymbol\beta_{{\rm d}0},\boldsymbol\beta_{{\rm a}0})^{\rm T}$ such that $P({\bf d};\boldsymbol\beta_{\rm 0})=Q({\bf d})$, a hyperparameter estimate $\hat {\boldsymbol\beta}$ is phrased to be consistent if satisfies the following relation for an arbitrarily small positive constant $\epsilon$: 
\begin{equation}
    \lim_{N\to\infty} {\rm Prob}[|\hat{\boldsymbol\beta}-\boldsymbol\beta_{\rm 0}|>\epsilon]=0 \hspace{20pt}({\rm consistency})
\end{equation}
where the realization probability of a proposition ${\rm Prob}[\cdot]$ is evaluated by $Q({\bf d})$, and $|\cdot|$ represents the L2 norm for vectors. 
In other words, the consistent estimate converges in probability to the true value in the limit of a large number of data $N$. 
We treat $M/N=const.$ (here called large degrees of freedom, or for brevity, ``nonparametric cases'') with a large number of model parameters $M$, which is complementary to the assumption of constant $M$ (``parametric cases'') in the proof of consistency on the maximum likelihood methods resulting in $M/N\to0$. 
The number of hyperparameters are meanwhile supposed to be finite. 
We specifically consider the maximum marginal likelihood estimate (MMLE) of $\boldsymbol\beta$: 
\begin{equation}
    \hat{\boldsymbol\beta}= {\rm argmax}_{\boldsymbol\beta} P({\bf d};\boldsymbol\beta).
\end{equation}
In summary, the thesis of this study is the consistency of the maximum marginal likelihood estimator for the natural parameters of exponential families with large $M$ ($\propto N$), which is generalized with the model-selection efficiency and other models in \S\ref{sec:gen_efficiency} and \S\ref{sec:gen_efficiency_app}. 
The natural parameters are identified with hyperparameters in this paper, as far as we consider the exponential family. 
In the proof of consistency, we presume the premised existence of $\boldsymbol\beta_0$.

\subsection{Properties of the MMLE in the natural-parameter inference}
\label{sec:MMLEinGibbsians}
The probability distributions in eqs.~(\ref{eq:modeldefinition}) within the exponential family are also termed Gibbs ensembles, which have several features useful for our proof of consistency~\citep{landau1994statistical}. 
One is the cumulant-generating nature of the normalization functions (called free entropies); the $n$-th derivatives of the minus free entropies $F_{\rm a}(\boldsymbol\beta_{\rm a})$ and $F_{\rm d}(\boldsymbol\beta_{\rm d})$, 
or equivalently, the free entropy $F(\boldsymbol\beta):=F_{\rm d}(\boldsymbol\beta)+F_{\rm a}(\boldsymbol\beta)$ of $P({\bf a},{\bf d};\boldsymbol\beta)$, 
are the $n$-th cumulants of the associated cost functions:
\begin{equation}
\begin{aligned}
    \frac{\partial F}{\partial \beta_{i}}=\langle \mathcal E_i\rangle_{\boldsymbol\beta}
    \\
    \frac{\partial^2 F}{\partial \beta_{i}\partial\beta_{j}}=-{\rm Cov}[\mathcal E_i,\mathcal E_j;{\boldsymbol\beta}]
\end{aligned}
\end{equation}
where $\langle \cdot \rangle_{\boldsymbol\beta}:=\int d{\bf a}d{\bf d}\cdot P({\bf d},{\bf a};\boldsymbol\beta)$ represents averaging by $P({\bf d},{\bf a};\boldsymbol\beta)$, 
and
${\rm Cov}[f,g;\boldsymbol\beta]:=\langle fg\rangle_{\boldsymbol\beta}-\langle f \rangle_{\boldsymbol\beta}\langle g\rangle_{\boldsymbol\beta}$ denotes the associated covariance for any random variables $f$ and $g$; $\boldsymbol{\mathcal E}=({\bf U},{\bf V})^{\rm T}$ is introduced for brevity. 
Higher order cumulants are similarly generated, and 
the cross cumulants of $U_i$ and $V_j$ are all zero. 

Another remarkable feature comes from the conjugacy of the Gibbsian models; 
the Gibbsian prior is the conjugate prior of the Gibbsian likelihood, and they provide the following Gibbsian posterior $P({\bf a}|{\bf d};\boldsymbol\beta)$:
\begin{equation}
    P({\bf a}|{\bf d};\boldsymbol\beta)=\frac{P({\bf a},{\bf d};\boldsymbol\beta)}{P({\bf d};\boldsymbol\beta)}
    =\exp[-\boldsymbol\beta^{\rm T} \boldsymbol{\mathcal E}({\bf a},{\bf d})+F_{\rm pos}(\boldsymbol\beta,{\bf d})],
\end{equation}
where
the free entropy $F_{\rm pos}(\boldsymbol\beta,{\bf d})$ of the posterior is given by the following relation:
\begin{equation}
F_{\rm pos}(\boldsymbol\beta,{\bf d}):=
F(\boldsymbol\beta)
-\ln P({\bf d};\boldsymbol\beta).
\label{eq:defofposfreeentropy}
\end{equation}
$F_{\rm pos}$ is the posterior cumulant generator, similarly to $F_{\rm d}$ and $F_{\rm a}$ (and $F$): 
\begin{equation}
\begin{aligned}
    \frac{\partial F_{\rm pos}}{\partial \beta_{i}}=\langle {\mathcal E}_i\rangle_{\boldsymbol\beta|{\bf d}}
    \\
    \frac{\partial^2 F_{\rm pos}}{\partial \beta_{i}\partial\beta_{j}}=-{\rm Cov}[\mathcal E_i,\mathcal E_j;{\boldsymbol\beta}|{\bf d}]
\end{aligned}
\end{equation}
where $\langle \cdot \rangle_{\boldsymbol\beta|{\bf d}}:=\int d{\bf a}\cdot P({\bf a}|{\bf d};\boldsymbol\beta)$ represents posterior averaging by $P({\bf a}|{\bf d};\boldsymbol\beta)$, 
and ${\rm Cov}[f,g;\boldsymbol\beta|{\bf d}]:=\langle fg\rangle_{\boldsymbol\beta|{\bf d}}-\langle f \rangle_{\boldsymbol\beta|{\bf d}}\langle g\rangle_{\boldsymbol\beta|{\bf d}}$ denotes the posterior covariance. 
Equation~(\ref{eq:defofposfreeentropy}) indicates that the 
marginal posterior of the hyperparameters is the free entropy difference between the data and model-parameter inversion model $P({\bf a},{\bf d};\boldsymbol\beta)$ and the posterior $P({\bf a}|{\bf d};\boldsymbol\beta)$~\citep{iba1996learning}; 
\begin{equation}
    \ln P({\bf d};\boldsymbol\beta)=F(\boldsymbol\beta)-F_{\rm pos}(\boldsymbol\beta,{\bf d}).
\label{eq:logPPMasfreenetropydiff}
\end{equation}
The increase of the free entropy corresponds to the decrease in the (Shannon) entropy in the concept of the second law of thermodynamics~\citep{landau1994statistical}. 
Equation~(\ref{eq:logPPMasfreenetropydiff}) then signifies that the 
log marginal likelihood of the hyperparameteres represents the reduction of uncertainty (i.e. entropy) by an observation, although not a simple comparison given their different sample-spaces (${\bf a}$, ${\bf d}$) and ${\bf d}$. 
For discrete ${\bf d}$, the left-hand side of eq.~(\ref{eq:logPPMasfreenetropydiff}) becomes generally negative (with a probability instead of the probability density) always increasing the free entropy. 

Given the cumulant generating nature of the free entropy, eq.~(\ref{eq:logPPMasfreenetropydiff}) means the marginal likelihoof of the hyperparameters is the cumulant-difference generator. 
Its first derivative is 
\begin{equation}
    \frac{\partial}{\partial\beta_{i}}\ln P ({\bf d};\boldsymbol\beta)= \langle {\mathcal E}_i\rangle_{\boldsymbol\beta}-\langle {\mathcal E}_i\rangle_{\boldsymbol\beta|{\bf d}}.
\label{eq:firstderivlnP}
\end{equation}
Since the mode in an open interval, such as the present case $0<\beta_i<\infty$, satisfies the stationary condition, 
eq.~(\ref{eq:firstderivlnP}) reveals that the MMLE requires the observation invariance of the cost function mean~\citep{iba1996learning}: 
\begin{equation}
    \langle {\mathcal E}\rangle_{\hat{\boldsymbol\beta}}=\langle {\mathcal E}\rangle_{\hat{\boldsymbol\beta}|{\bf d}}.
\label{eq:Ibarel}
\end{equation}
For the second derivative, we have
\begin{equation}
\begin{aligned}
    -\frac{\partial^2}{\partial \beta_i\partial \beta_j} \ln P({\bf d};\boldsymbol\beta)
    &=
    {\rm Cov}[\mathcal E_i,\mathcal E_j;\boldsymbol\beta]-{\rm Cov}[\mathcal E_i,\mathcal E_j;\boldsymbol\beta|{\bf d}].
\end{aligned}
\label{eq:secondderivlnP}
\end{equation}
Equation~(\ref{eq:secondderivlnP}) shows that the concavity of the marginal likelihood of the hyperparameters is equivalent to
the posterior variance reduction of the cost functions. The higher-order cumulant differences are similarly generated. 

Averaging the extremal condition eq.~(\ref{eq:Ibarel}) of the MMLE provides an useful relation for our proof of consistency: 
\begin{equation}
    \langle \langle \boldsymbol{\mathcal E}\rangle_{\boldsymbol\beta|{\bf d}}\rangle_{\boldsymbol\beta_0}=\langle \boldsymbol{\mathcal E}\rangle_{\boldsymbol\beta}.
     \label{eq:avIbarel}
\end{equation}
Remarkably, eq.~(\ref{eq:avIbarel}) comprehends the mean invariance of $\boldsymbol{\mathcal E}$ under the probability decomposition $P({\bf a},{\bf d};\boldsymbol\beta)=P({\bf a}|{\bf d};\boldsymbol\beta)P({\bf d};\boldsymbol\beta)$ for $\boldsymbol\beta=\boldsymbol\beta_0$: 
\begin{equation}
    \langle \langle \boldsymbol{\mathcal E}\rangle_{\boldsymbol\beta_0|{\bf d}}\rangle_{\boldsymbol\beta_0}=\langle \boldsymbol{\mathcal E}\rangle_{\boldsymbol\beta_0},
    \label{eq:avIbarel2}
\end{equation}
which is the rewritten stationary condition of the minimum cross entropy: 
\begin{equation}
    \left.\left\langle \frac{\partial}{\partial \boldsymbol\beta}\ln P({\bf d};\boldsymbol\beta)\right\rangle_{\boldsymbol\beta_0}\right|_{\boldsymbol\beta=\boldsymbol\beta_0}={\bf 0}.
    \label{eq:avIbarel2_probform}
\end{equation}
Equations~(\ref{eq:avIbarel}) and (\ref{eq:avIbarel2}) derive from the definitional identity of the cross entropy $\langle -\ln P({\bf d};\boldsymbol\beta)\rangle_{\boldsymbol\beta_0}$, 
which is the $\boldsymbol\beta$-dependent part of the Kullback-Leibler divergence of $P({\bf d};\boldsymbol\beta)$ from the true data generator $P({\bf d};\boldsymbol\beta_0)$:
\begin{equation}
    \mathcal D[P({\bf d}|\boldsymbol\beta)||P({\bf d}|\boldsymbol\beta_0)]:=\langle -\ln P({\bf d}|\boldsymbol\beta)\rangle_{\boldsymbol\beta_0}-\langle -\ln P({\bf d}|\boldsymbol\beta_0)\rangle_{\boldsymbol\beta_0},
\end{equation}
which is generally nonnegative and takes zero if and only if $P({\bf d}|\boldsymbol\beta)=P({\bf d}|\boldsymbol\beta_0)$ that corresponds to $\boldsymbol\beta=\boldsymbol\beta_0$. 
Equations~(\ref{eq:avIbarel}) and (\ref{eq:avIbarel2}) motivate us to prove the consistency of the MMLE for the natural parameters with the large-deviation properties of the cost functions. 

Likewise, averaging eq.~(\ref{eq:secondderivlnP}) by $P({\bf d};\boldsymbol\beta)$, we have
\begin{equation}
    -\left.\left\langle \frac{\partial^2}{\partial \beta_i\partial\beta_j} \ln P({\bf d};\boldsymbol\beta)\right\rangle_{\boldsymbol\beta_0}\right|_{\boldsymbol\beta=\boldsymbol\beta_0}= {\rm Cov}[\langle \mathcal E_i\rangle_{\boldsymbol\beta|{\bf d}},\langle \mathcal E_j\rangle_{\boldsymbol\beta|{\bf d}};\boldsymbol\beta]\succ 0,
    \label{eq:meanvariancereduction}
\end{equation}
where $\succ$ denotes the positive definiteness of the left-hand side minus the right-hand side. 
The calculation utilizes the decomposition of the covariance ${\rm Cov}[\cdot,\cdot;\boldsymbol\beta]$ into the interclass covariance ${\rm Cov}[\langle\cdot\rangle_{\boldsymbol\beta|{\bf d}},\langle\cdot\rangle_{\boldsymbol\beta|{\bf d}};\boldsymbol\beta]$ and intraclass covariance $\langle{\rm Cov}[\cdot,\cdot|{\bf d};\boldsymbol\beta]\rangle_{\boldsymbol\beta}$; for random variables $x$ and $y$,
\begin{equation}
{\rm Cov}[x,y;\boldsymbol\beta]=
{\rm Cov}[\langle x\rangle_{\boldsymbol\beta|{\bf d}},\langle y\rangle_{\boldsymbol\beta|{\bf d}};\boldsymbol\beta]
+
\langle{\rm Cov}[x,y|{\bf d};\boldsymbol\beta]\rangle_{\boldsymbol\beta}.
\label{eq:variancedecomposition}
\end{equation}
Equation~(\ref{eq:meanvariancereduction}) represents the posterior variance reduction of the cost functions $\boldsymbol{\mathcal E}$ on average. 
While the positive semidefiniteness of the Fisher information matrix is generally true without limited to the Gibbs distribution, its positive definiteness further follows that of the interclass variance in Gibbsian $P({\bf a},{\bf d};\boldsymbol\beta)$, as far as there is an $\langle E_i\rangle_{\boldsymbol\beta|{\bf d}}$ value such that $\min \langle E_i\rangle_{\boldsymbol\beta|{\bf d}}<\langle E_i\rangle_{\boldsymbol\beta|{\bf d}}<\infty$.

\section{Consistency of the hyperparameter MMLE in general linear models}
\label{eq:proof4gaussmodel}
Examples of the exponential family include the general linear models. 
Its two-hyperparameter form is often employed, associated with the regularized least squares~\citep{akaike1980likelihood,yabuki1992geodetic}: 
\begin{equation}
    \begin{aligned}
    {\bf d}|{\bf a};\sigma^2&\sim \mathcal N({\bf Ha},\sigma^2{\bf E}^{-1})
    \\
    {\bf a};\rho^2&\sim \mathcal N(0,\rho^2{\bf G}^\dag)\times u
    \end{aligned}
    \label{eq:twohyperparameterform}
\end{equation}
where $\sigma^2\in (0,\infty)$ and $\rho^2\in (0,\infty)$ are the hyperparameters of the likelihood and prior, respectively,
${\bf H}\in \mathbb R^{N\times M}$, ${\bf E}\in \mathbb R^{N\times N}$, and ${\bf G}\in \mathbb R^{M\times M}$ denote the coefficient matrices, and ${}^\dag$ represents the generalized inverse; 
a uniform filter $u$ is assumed over a sufficiently wide range within the null-eigenspace of ${\bf G}$ for the properness of the model-parameter prior. 
The hyperparameters $\sigma^2$ and $\rho^2$ here represent the scales of variance in the likelihood and prior of ${\bf a}$, respectively. 
This model is rewritten in a canonical form (eqs.~\ref{eq:modeldefinition}) with
$\beta_{\rm d}=1/\sigma^2$ and $\beta_{\rm a}=1/\rho^2$ (scales of precision) and the quadratic cost functions: 
\begin{equation}
    \begin{aligned}
        U&=\frac 1 2 ({\bf d}-{\bf Ha})^{\rm T}{\bf E}^{-1}({\bf d}-{\bf Ha})
        \\
        V&=\frac 1 2 {\bf a}^{\rm T}{\bf Ga}
    \end{aligned}
\end{equation}
The rank [${\rm rk}(\cdot)$] of ${\bf G}$ is hereafter denoted by $P:={\rm rk}({\bf G})>0$ [with ${\rm rk}(u)=M-P$], and ${\bf E}$ is presumed to be full-ranked. We also suppose $N+P>M$ for the well-posedness of the after-mentioned regularized least square estimates ${\bf a}_*$; otherwise ($N+P\leq M$), the regularized least square estimate could overfit the data, similarly to the least square for $N\leq M$. For the same reason, we assume ${\bf G}\not\propto{\bf H}^{\rm T}{\bf E}^{-1}{\bf H}$. 
Additionally, we limit our asymptotic analysis to the case of $P/M\to const.$ to focus on the two intrinsic large parameters of large-degree-of-freedom models: $N$ and $M$; constant $P/M$ and $P$ close to $M$ may be practical examples. 

Here we present the proof of consistency on the maximum marginal likelihood estimation of $\beta_{\rm d}$ and $\beta_{\rm a}$. 
The prior predictive distribution for this case is given as follows~\citep[e.g.][]{yabuki1992geodetic}:
\begin{equation}
\begin{aligned}
    \ln P({\bf d};\boldsymbol\beta)=&-\beta_{\rm d}[U_*({\bf d},\lambda)+\lambda V_*({\bf d},\lambda)]+\frac{N+P-M}2\ln \beta_{\rm d}
    \\
    &+\frac P 2 \ln\lambda -\frac 1 2 \ln |{\bf H}^{\rm T}{\bf E}^{-1}{\bf H}+\lambda{\bf G}|+c,
    \end{aligned}
    \label{eq:priorpredictivemodel}
\end{equation}
where $U_*({\bf d},\lambda):=U({\bf d},{\bf a}_*(\lambda,{\bf d}))$ and $V_*({\bf d},\lambda):=({\bf d},{\bf a}_*(\lambda,{\bf d}))$ are the cost-function values for the regularized least square estimate of the model parameters 
${\bf a}_*({\bf d},\lambda):=({\bf H}^{\rm T}{\bf E}^{-1}{\bf H}+\lambda {\bf G})^{-1}{\bf H}^{\rm T}{\bf E}^{-1}{\bf d}$ for given data ${\bf d}$ and $\lambda:=\beta_{\rm a}/\beta_{\rm d}(=\sigma^2/\rho^2)$, 
and $|\cdot|$ represents the determinant for matrices; $c$ represents a normalization constant independent of the data and hyperparameter values. 

\subsection{Local analysis}

The proof is designed to show the true hyperparameters $\boldsymbol\beta_0$ constitute the mode (besides the unique stationary point) of the marginal likelihood of the hyperparameters. 
In this subsection, we prove the existence of the stationary point within the $\epsilon$-neighborhood of the true hyperparameters; it is the proof of consistency provided the stationary point is unique in $\ln P({\bf d};\boldsymbol\beta)$ for given data ${\bf d}$, which leads to the general proof in the next subsection. 

We first find the stationary condition of the marginal likelihood of the hyperparameters $\partial_{\boldsymbol\beta} \ln P({\bf d};\boldsymbol\beta)={\bf 0}$ is almost surely satisfied in leading orders at $\boldsymbol\beta=\boldsymbol\beta_0$. 
The left hand side of eq.~(\ref{eq:Ibarel}) is now
\begin{equation}
\begin{aligned}
    \langle U\rangle_{\boldsymbol\beta}=\frac{N}{2\beta_{\rm d}}
    \\
    \langle V\rangle_{\boldsymbol\beta}=\frac{P}{2\beta_{\rm a}}
\end{aligned}
\label{eq:equipartition_UV}
\end{equation}
The right hand side of eq.~(\ref{eq:Ibarel}) for $\boldsymbol\beta=\boldsymbol\beta_0$ fluctuates around the mean identical to its left hand side (eq.~\ref{eq:avIbarel}) with the variance bounded as
\begin{equation}
    {\rm Var}[\langle U\rangle_{\boldsymbol\beta|{\bf d}};\boldsymbol\beta]
    ={\rm Var}[ U;\boldsymbol\beta]-{\rm Var}[ U|{\bf d};\boldsymbol\beta]
    \leq {\rm Var}[ U;\boldsymbol\beta]
    =\frac{N}{2\beta_{\rm d}^2},
    \label{eq:variancebounding_U}
\end{equation}
and similarly, 
\begin{equation}
    {\rm Var}[\langle V\rangle_{\boldsymbol\beta|{\bf d}};\boldsymbol\beta]\leq {\rm Var}[ V;\boldsymbol\beta]=\frac{P}{2\beta_{\rm a}^2},
    \label{eq:variancebounding_V}
\end{equation}
where ${\rm Var}[\cdot;\boldsymbol\beta]:=\langle \cdot:\cdot\rangle_{\boldsymbol\beta}$ and ${\rm Var}[\cdot|{\bf d};\boldsymbol\beta]:=\langle \cdot:\cdot\rangle_{\boldsymbol\beta|{\bf d}}$.
We use the decomposition of the variance ${\rm Var}[\cdot;\boldsymbol\beta]$ into the interclass variance ${\rm Var}[\langle\cdot\rangle_{\boldsymbol\beta|{\bf d}};\boldsymbol\beta]$ and intraclass variance $\langle{\rm Var}[\cdot|{\bf d};\boldsymbol\beta]\rangle_{\boldsymbol\beta}$ (a scalar form of eq.~\ref{eq:variancedecomposition}): 
\begin{equation}
    {\rm Var}[\cdot;\boldsymbol\beta]
    ={\rm Var}[\langle\cdot\rangle_{\boldsymbol\beta|{\bf d}};\boldsymbol\beta]+
    \langle{\rm Var}[\cdot|{\bf d};\boldsymbol\beta]\rangle_{\boldsymbol\beta}.
\end{equation}
Equations~(\ref{eq:avIbarel}) and (\ref{eq:equipartition_UV})--(\ref{eq:variancebounding_V}) with Chebyshev's inequality yield
\begin{equation}
\begin{aligned}
    {\rm Prob}\left[\left|\frac{\langle U\rangle_{\boldsymbol\beta_0|{\bf d}}}{\langle U\rangle_{\boldsymbol\beta_0}}-1\right|>\epsilon\right]
    \leq 
    \frac{{\rm Var}[\langle U\rangle_{\boldsymbol\beta_0|{\bf d}};\boldsymbol\beta_0]}{\epsilon^2\langle U\rangle_{\boldsymbol\beta_0}^2}
    \leq 
    \frac{{\rm Var}[U;\boldsymbol\beta_0]}{\epsilon^2\langle U\rangle_{\boldsymbol\beta_0}^2}
    = \frac{2}{N\epsilon^2}
    \\
    {\rm Prob}\left[\left|\frac{\langle V\rangle_{\boldsymbol\beta_0|{\bf d}}}{\langle V\rangle_{\boldsymbol\beta_0}}-1\right|>\epsilon\right]
    \leq 
    \frac{{\rm Var}[\langle V\rangle_{\boldsymbol\beta_0|{\bf d}};\boldsymbol\beta_0]}{\epsilon^2\langle V\rangle_{\boldsymbol\beta_0}^2}
    \leq 
    \frac{{\rm Var}[V;\boldsymbol\beta_0]}{\epsilon^2\langle V\rangle_{\boldsymbol\beta_0}^2}
    = \frac{2}{P\epsilon^2}
    \end{aligned}
    \label{eq:fluctuationprobabilitybound}
\end{equation}
and therefore the desired relation:
\begin{equation}
    \lim_{N\to\infty}
    {\rm Prob}\left[\left|\frac{\langle U\rangle_{\boldsymbol\beta_0|{\bf d}}}{\langle U\rangle_{\boldsymbol\beta_0}}-1\right|>\epsilon\right]
    =
    \lim_{P\to\infty}
    {\rm Prob}\left[\left|\frac{\langle V\rangle_{\boldsymbol\beta_0|{\bf d}}}{\langle V\rangle_{\boldsymbol\beta_0}}-1\right|>\epsilon\right]
    =0.
    \label{eq:almostsureleadingorderstationarity}
\end{equation}
More symbolically, we have
    $\langle U\rangle_{\boldsymbol\beta_0|{\bf d}} =\langle U\rangle_{\boldsymbol\beta_0} +\mathcal O_p(\sqrt N)
    $, 
    $
    \langle V\rangle_{\boldsymbol\beta_0|{\bf d}} =\langle V\rangle_{\boldsymbol\beta_0} +\mathcal O_p(\sqrt P)
    $. 
We also obtain the following for fixed $P/N$:
\begin{equation}
    \lim_{N,P\to\infty}{\rm Prob}\left[\left|\frac 1 N\partial_{\boldsymbol\beta}\ln P({\bf d};\boldsymbol\beta)\right|_{\boldsymbol\beta=\boldsymbol\beta_0}>\epsilon\right]=0.
    \label{eq:almostsureleadingorderstationarity_lnPform}
\end{equation}
The essence in the derivation of eq.~(\ref{eq:almostsureleadingorderstationarity}) is the extensive property of the means and variances of the cost functions $U$ and $V$, which makes the randomness of the cost functions asymptotically negligible in their leading orders.

Since the true hyperparameters $\boldsymbol\beta_0$ almost surely satisfy the stationarity condition of the log marginal likelihood of the hyperparameters $\ln P ({\bf d};\boldsymbol\beta)$ in leading orders, there is almost surely a single local maximum of $P ({\bf d};\boldsymbol\beta)$ near $\boldsymbol\beta_0$ if $\ln P ({\bf d};\boldsymbol\beta)$ is locally sufficiently concave having the same-order curvature with probability 1 at $\boldsymbol\beta=\boldsymbol\beta_0$. 
We actually find such local sufficient concavity of $\ln P ({\bf d};\boldsymbol\beta)$ at $\boldsymbol\beta=\boldsymbol\beta_0$. 
Its derivation relies on a quadratic bound for the $2\times 2$ matrix ${\mathcal I}({\bf d},\boldsymbol\beta):=-\partial^2_{\boldsymbol\beta}\ln P ({\bf d};\boldsymbol\beta)$, which corresponds to the observed Fisher information matrix for $\boldsymbol\beta=\hat{\boldsymbol\beta}$; using a unit vector $\hat x$, we have
\begin{equation}
\begin{aligned}
    \hat x^{\rm T}\mathcal I\hat x 
    &
    \geq \frac{{\rm Tr}(\mathcal I)}2-\sqrt{\frac{{\rm Tr}^2(\mathcal I)}4-|\mathcal I|}
    \geq 
    \frac{{\rm Tr}(\mathcal I)}2-\left|\frac{{\rm Tr}(\mathcal I)}2-\frac{|\mathcal I|}{{\rm Tr}(\mathcal I)}\right|
    \\&
    \geq \frac{|\mathcal I|}{|{\rm Tr}(\mathcal I)|}+\frac{1}{2}[{\rm Tr}(\mathcal I)-|{\rm Tr}(\mathcal I)|],
\end{aligned}
\label{eq:eigenvaluebound}
\end{equation}
where ${\rm Tr}[\cdot]$ represents the trace for a matrix; 
the first transform represents the bound of the quadratic left hand side using the minimum eigenvalue of $\mathcal I$, and the second and third transforms utilize the properties of the quadratic and absolute functions with ${\rm Tr}^2(\mathcal I)/4\geq|\mathcal I|$. 
Inequality~(\ref{eq:eigenvaluebound}) bounds the minimum eigenvalue of a $2\times 2$ matrix by its determinant and trace. 
We use ineq.~(\ref{eq:eigenvaluebound}) and examine the positive definiteness of $\mathcal I$ in its leading order by evaluating the determinant and trace of $\mathcal I$ from the derivatives of $\ln P ({\bf d};\boldsymbol\beta)$:
\begin{equation}
\begin{aligned}
    &\left(\frac{\partial \ln P({\bf d};\boldsymbol\beta)}{\partial \beta_{\rm d}}\right)_{\beta_{\rm a}}=-U_*+ \frac{N-M}{2\beta_{\rm d}}+\frac{\lambda}{2\beta_{\rm d}}{\rm Tr}[{\bf J}]
\\
    &\left(\frac{\partial \ln P({\bf d};\boldsymbol\beta)}{\partial \beta_{\rm a}}\right)_{\beta_{\rm d}}=-V_*+ \frac{P}{2\beta_{\rm a}}-\frac{\lambda}{2\beta_{\rm a}}{\rm Tr}[{\bf J}]
\end{aligned}
\label{eq:partialfirstderiv_beta}
\end{equation}
and 
\begin{equation}
\begin{aligned}
    &\left(\frac{\partial^2 \ln P({\bf d};\boldsymbol\beta)}{\partial \beta_{\rm d}^2}\right)_{\beta_{\rm a}}=-\frac{N-M}{2\beta_{\rm d}^2}-\frac{\lambda}{2\beta_{\rm d}^2}{\rm Tr}[{\bf J}]
    -\lambda \left(\frac{\partial}{\partial\beta_{\rm d}}\left(\frac{\partial \ln P({\bf d};\boldsymbol\beta)}{\partial \beta_{\rm a}}\right)_{\beta_{\rm d}}\right)_{\beta_{\rm a}}
    \\
    &\left(\frac{\partial^2 \ln P({\bf d};\boldsymbol\beta)}{\partial \beta_{\rm a}^2}\right)_{\beta_{\rm d}}=-\frac{P}{2\beta_{\rm a}^2}+\frac{\lambda}{2\beta_a^2}{\rm Tr}[{\bf J}]
    -\frac 1 \lambda \left(\frac{\partial}{\partial\beta_{\rm d}}\left(\frac{\partial \ln P({\bf d};\boldsymbol\beta)}{\partial \beta_{\rm a}}\right)_{\beta_{\rm d}}\right)_{\beta_{\rm a}}
    \\
    &\left(\frac{\partial}{\partial\beta_{\rm d}}\left(\frac{\partial \ln P({\bf d};\boldsymbol\beta)}{\partial \beta_{\rm a}}\right)_{\beta_{\rm d}}\right)_{\beta_{\rm a}}
    =\frac{\lambda}{\beta_{\rm d}}\frac{d V_*}{d\lambda}+\frac{\lambda}{2\beta_{\rm a}\beta_{\rm d}}{\rm Tr}[{\bf J}({\bf I}-\lambda {\bf J})]
\end{aligned}
\label{eq:secondderiv}
\end{equation}
where ${\bf J}(\lambda):=({\bf H}^{\rm T}{\bf E}^{-1}{\bf H}+\lambda {\bf G})^{-1}{\bf G}$, ${\bf I}$ denotes a unit matrix, and subscripts of partial derivatives represent fixed variables in the differentiation. 
We utilize $d{\bf J}/d\lambda=-{\bf J}^2$ and the following relation:
\begin{equation}
\frac{dU_*(\lambda)}{d\lambda}+\lambda \frac{d V_*(\lambda)}{d\lambda}=0,
\end{equation}
which comes from the mode condition $d(U+\lambda V)/d{\bf a}={\bf 0}$ of the model-parameter posterior $P({\bf a}|{\bf d};\boldsymbol\beta)$ satisfied at ${\bf a}={\bf a}_*$. 
Equations~(\ref{eq:secondderiv}) show the randomness of the second derivatives are all represented by that of the cross derivative, or more specifically by $dV_*/d\lambda$. 
Then, utilizing $\ln P ({\bf d};\boldsymbol\beta)$ is a Gaussian weighted by 
a quadratic $U_*+\lambda V_*$, 
\begin{equation}
    U_*({\bf d},\lambda)+\lambda V_*({\bf d},\lambda)=\frac 1 2 {\bf d}^{\rm T}[{\bf E}^{-1}-{\bf E}^{-1}{\bf H}{\bf C}^\prime(\lambda){\bf H}^{\rm T}{\bf E}^{-1}]{\bf d},
    \label{eq:effecitiveposteriorweightfunction}
\end{equation}
where ${\bf C}^\prime(\lambda):=({\bf H}^{\rm T}{\bf E}^{-1}{\bf H}+\lambda {\bf G})^{-1}$ denotes the normalized covariance of $P({\bf a}|{\bf d};\boldsymbol\beta)$, 
and using $\langle({\bf d}^T...{\bf d})^2\rangle_{\boldsymbol\beta}=(\langle{\bf d}^T...{\bf d}\rangle)_{\boldsymbol\beta}^2+2\langle{\bf d}^T...\langle{\bf d}{\bf d}^T\rangle_{\boldsymbol\beta}...{\bf d}\rangle_{\boldsymbol\beta}$ for Gaussian ${\bf d}$, 
we evaluate the mean and variance for the cross derivative of $\ln P ({\bf d};\boldsymbol\beta)$: 
\begin{equation}
\begin{aligned}
    \left\langle\left(\frac{\partial}{\partial \beta_{\rm d}}\left(\frac{\partial \ln P({\bf d};\boldsymbol\beta)}{\partial \beta_{\rm a}}\right)_{\beta_{\rm d}}\right)_{\beta_{\rm a}}\right\rangle_{\boldsymbol\beta}
    =-\frac1{2\beta_{\rm d}\beta_{\rm a}}{\rm Tr}[{\bf H}{\bf C}^\prime{\bf H}^{\rm T}\tilde{\bf E}]
    \\
    {\rm Var}\left[\left(\frac{\partial}{\partial \beta_{\rm d}}\left(\frac{\partial \ln P({\bf d};\boldsymbol\beta)}{\partial \beta_{\rm a}}\right)_{\beta_{\rm d}}\right)_{\beta_{\rm a}};\boldsymbol\beta\right]
    =\frac{2}{\beta_{\rm d}^2\beta_{\rm a}^2}{\rm Tr}[({\bf H}{\bf C}^\prime{\bf H}^{\rm T}\tilde{\bf E})^2]
    \end{aligned}
    \label{eq:meanandvarianceofcrossderivativeatbetazero}
\end{equation}
where 
$\tilde{\bf E}(\lambda):={\bf E}^{-1}-{\bf E}^{-1}{\bf H}{\bf C}^\prime(\lambda){\bf H}^{\rm T}{\bf E}^{-1}$ is the matrix coefficient of $U_*+\lambda V_*$ (eq.~\ref{eq:effecitiveposteriorweightfunction}), corresponding to the generalized inverse of the normalized data covariance for $P ({\bf d};\boldsymbol\beta)$ in the limit of the improper prior $u\to0$. 
Note $\lambda{\bf HJC}^\prime{\bf H}^{\rm T}{\bf E}^{-1}={\bf HC}^\prime{\bf H}^{\rm T}\tilde{\bf E}$.
Equations~(\ref{eq:secondderiv}) and (\ref{eq:meanandvarianceofcrossderivativeatbetazero}) show the observed Fisher information almost surely reproduces the Fisher information in leading orders when $\hat{\boldsymbol\beta}=\boldsymbol\beta_0$: 
\begin{equation}
    \lim_{N,M,P\to\infty}{\rm Prob}\left[\left|\left|-\frac 1 {N}\partial_{\boldsymbol\beta}^2\ln P({\bf d};\boldsymbol\beta)-\left\langle-\frac 1 {N} \partial_{\boldsymbol\beta}^2\ln P({\bf d};\boldsymbol\beta)\right\rangle_{\boldsymbol\beta_0}\right|\right|_{\boldsymbol\beta=\boldsymbol\beta_0}>\epsilon\right]=0,
    \label{eq:almostsureconvergenceFisher}
\end{equation}
where $||\cdot||$ denotes the Frobenius norm for a matrix, and we evaluate ${\rm Tr}({\bf A})=\Theta[{\rm rk}({\bf A})]$ for a matrix ${\bf A}$; symbolically, 
$ 
    -\partial_{\boldsymbol\beta}^2\ln P({\bf d};\boldsymbol\beta)=\left\langle- \partial_{\boldsymbol\beta}^2\ln P({\bf d};\boldsymbol\beta)\right\rangle_{\boldsymbol\beta_0}+\mathcal O_p(\sqrt {N})+\mathcal O_p(\sqrt {M})
$ for $\boldsymbol\beta=\boldsymbol\beta_0$. 
We continue the matrix analysis of $\mathcal I$ on average given its convergence in probability: 
\begin{equation}
\begin{aligned}
    &\left\langle-\left(\frac{\partial^2 \ln P({\bf d};\boldsymbol\beta)}{\partial \beta_{\rm d}^2}\right)_{\beta_{\rm a}}\right\rangle_{\boldsymbol\beta}=
    \left\langle
    \frac{U_*}{\beta_{\rm d}}
    +\lambda \left(\frac{\partial}{\partial\beta_{\rm d}}\left(\frac{\partial \ln P({\bf d};\boldsymbol\beta)}{\partial \beta_{\rm a}}\right)_{\beta_{\rm d}}\right)_{\beta_{\rm a}}
    \right\rangle_{\boldsymbol\beta}
    \\
    &\left\langle-\left(\frac{\partial^2 \ln P({\bf d};\boldsymbol\beta)}{\partial \beta_{\rm a}^2}\right)_{\beta_{\rm d}}\right\rangle_{\boldsymbol\beta}=
    \left\langle\frac{V_*}{\beta_{\rm a}}
    +\frac 1 \lambda \left(\frac{\partial}{\partial\beta_{\rm d}}\left(\frac{\partial \ln P({\bf d};\boldsymbol\beta)}{\partial \beta_{\rm a}}\right)_{\beta_{\rm d}}\right)_{\beta_{\rm a}}
    \right\rangle_{\boldsymbol\beta}
\end{aligned}
\end{equation}
or explicitly, 
\begin{equation}
    \begin{aligned}
    &\left\langle-\left(\frac{\partial^2 \ln P({\bf d};\boldsymbol\beta)}{\partial \beta_{\rm d}^2}\right)_{\beta_{\rm a}}\right\rangle_{\boldsymbol\beta}=
    \frac{1}{2\beta_{{\rm d}}^2}{\rm Tr}[({\bf E}\tilde{\bf E})^2]
    \\
    &\left\langle-\left(\frac{\partial^2 \ln P({\bf d};\boldsymbol\beta)}{\partial \beta_{\rm a}^2}\right)_{\beta_{\rm d}}\right\rangle_{\boldsymbol\beta}=
    \frac{1}{2\beta_{{\rm a}}^2}{\rm Tr}[({\bf I}-{\bf E}\tilde{\bf E})^2\tilde{\bf E}\tilde{\bf E}^\dag]
    \end{aligned}
\end{equation}
where we use $\langle\partial_{\boldsymbol\beta}\ln P({\bf d};\boldsymbol\beta)\rangle_{\boldsymbol\beta}={\bf 0}$ (i.e. eq.~\ref{eq:avIbarel2_probform}) 
and the means of $U_*$ and $V_*$: 
\begin{equation}
\begin{aligned}
    \langle U_*({\bf d},\lambda)\rangle_{\boldsymbol\beta}&=\frac{1}{2\beta_{\rm d}}{\rm Tr}[{\bf E}\tilde{\bf E}]
    \\
    \langle U_*+\lambda V_*({\bf d},\lambda)\rangle_{\boldsymbol\beta}&=\frac{N+P-M}{2\beta_{\rm d}}
\end{aligned}
\end{equation}
Finally, we acquire the trace and determinant of the associated Fisher information matrix $\langle \mathcal I\rangle$: 
\begin{equation}
    {\rm Tr}[\langle -\partial_{\boldsymbol\beta}^2\ln P({\bf d};\boldsymbol\beta)\rangle_{\boldsymbol\beta}]=\frac{1}{2\beta_{{\rm d}}^2}{\rm Tr}[({\bf E}\tilde{\bf E})^2]+\frac{1}{2\beta_{{\rm a}}^2}{\rm Tr}[({\bf I}-{\bf E}\tilde{\bf E})^2\tilde{\bf E}\tilde{\bf E}^\dag]
    \label{eq:explicitrepresentation_trace}
\end{equation}
and
\begin{equation}
    \left|\langle -\partial_{\boldsymbol\beta}^2\ln P({\bf d};\boldsymbol\beta)\rangle_{\boldsymbol\beta}\right|=
    \left(\frac{N+P-M}{2\beta_{\rm d}\beta_{\rm a}}\right)^2 c_{\rm det}
    \label{eq:explicitrepresentation_determinant}
\end{equation}
with
\begin{equation}
    c_{\rm det}(\lambda):=\left\{\frac{{\rm Tr}[({\bf E}\tilde {\bf E})^2]}{N+P-M}-\left[\frac{{\rm Tr}({\bf E}\tilde {\bf E})}{N+P-M}\right]^2\right\}> 0,
    \label{eq:cdetfordeterminant}
\end{equation}
where $c_{\rm det}$ is the difference between the mean square and squared mean with respect to the eigenvalues of ${\bf E}\tilde {\bf E}$, generally nonnegative, and zero if and only if all eigenvalues of ${\bf E}\tilde {\bf E}$ take the same value (i.e. ${\bf E}\tilde{\bf E}\propto {\bf I}$), as noticed from Jensen's inequality; 
${\bf E}\tilde{\bf E}\propto {\bf I}$ is realized only if ${\bf G}\propto{\bf H}^{\rm T}{\bf E}^{-1}{\bf H}$ with $N=M=P$, and thus $c_{\rm det}$ is positive as far as in the case of ${\bf G}\not\propto{\bf H}^{\rm T}{\bf E}^{-1}{\bf H}$ we investigate. 
We note ${\rm rk}(\tilde{\bf E})=N+P-M$, derived from that $P ({\bf d};\boldsymbol\beta)$ is rank-deficient for $P<M$ without the uniform filter $u$ in the model-parameter prior. 
With 
the above expressions of the trace and determinant of $\langle\mathcal I({\bf d};\boldsymbol\beta)\rangle_{\boldsymbol\beta}$ (eqs.~\ref{eq:explicitrepresentation_trace} and \ref{eq:explicitrepresentation_determinant}),
ineq.~(\ref{eq:eigenvaluebound}) provides an almost sure lower bound of $\mathcal I({\bf d},\boldsymbol\beta)$ for $\boldsymbol\beta=\boldsymbol\beta_0$ under the leading-order convergence in probability of $\mathcal I({\bf d},\boldsymbol\beta_0)$ (eq.~\ref{eq:almostsureconvergenceFisher}); defining $\mathcal I_0({\bf d}):=\mathcal I({\bf d},\boldsymbol\beta_0)$ and $Nc_{\mathcal I}:=|\langle\mathcal I_0\rangle_{\boldsymbol\beta_0}|/{\rm Tr}(\langle\mathcal I_0\rangle_{\boldsymbol\beta_0})$,
ineq.~(\ref{eq:eigenvaluebound}) yields 
\begin{equation}
\begin{aligned}
    {\rm Prob}\left[\frac 1 N \hat x \mathcal I_0\hat x< c_{\mathcal I}-\epsilon\right]
    &\leq
    {\rm Prob}\left[\frac 1 N \hat x \mathcal I_0\hat x<\frac1N\hat x^{\rm T}\langle\mathcal I_0\rangle_{\boldsymbol\beta_0}\hat x-\epsilon\right]
\end{aligned}
\label{eq:rewritingI0boundwithFisherinfoconvergence}
\end{equation}
where we use ineq.~(\ref{eq:eigenvaluebound}) replacing $\mathcal I$ with the Fisher information matrix $\langle\mathcal I_0\rangle_{\boldsymbol\beta}$ of $P({\bf d};\boldsymbol\beta_0)$:
\begin{equation}
    \hat x^{\rm T}\langle\mathcal I_0\rangle_{\boldsymbol\beta_0}\hat x\geq Nc_{\mathcal I}.
    \label{eq:boundforFisherinfo}
\end{equation}
Applying Chebyshev's inequality with eq.~(\ref{eq:almostsureconvergenceFisher}), 
eq.~(\ref{eq:rewritingI0boundwithFisherinfoconvergence}) leads to
\begin{equation}
    \lim_{N,M,P\to\infty} {\rm Prob}\left[\frac 1 N \hat x \mathcal I_0\hat x< c_{\mathcal I}-\epsilon\right]=0,
\label{eq:almostsuresufficientconvexity}
\end{equation}
where we evaluate ${\rm Tr}({\bf A})$ as $\Theta[{\rm rk}({\bf A})]$ for a matrix ${\bf A}$. 
Equation~(\ref{eq:almostsuresufficientconvexity}) indicates $P({\bf d};\boldsymbol\beta)$ is in its leading order asymptotically almost surely locally concave at $\boldsymbol\beta=\boldsymbol\beta_0$ when the given data ${\bf d}$ is generated by the prior predictive distribution $P({\bf d};\boldsymbol\beta)$ of $\boldsymbol\beta=\boldsymbol\beta_0$. 
Given eq.~(\ref{eq:secondderivlnP}), eq.~(\ref{eq:almostsuresufficientconvexity}) also means the almost sure posterior covariance reduction with respect to the cost functions $U$ and $V$ in asymptotics: 
for the covariance matrices of the cost function vector $\boldsymbol{\mathcal E}$, 
\begin{equation}
    \lim_{N,M,P\to\infty}{\rm Prob}\left[\frac 1 N {\rm Cov}[\boldsymbol{\mathcal E},\boldsymbol{\mathcal E}^{\rm T}|{\bf d};\boldsymbol\beta_0]\preceq \frac 1 N {\rm Cov}[\boldsymbol{\mathcal E},\boldsymbol{\mathcal E}^{\rm T};\boldsymbol\beta_0]- (c_{\mathcal I}-\epsilon) \hat x\hat x^{\rm T}\right]=1,
    \label{eq:posteriorcovariancereduction}
\end{equation}
where $\preceq$ denotes the positive semidefiniteness of the right-hand side minus the left-hand side. 
Equation~(\ref{eq:posteriorcovariancereduction}) comprehends 
asymptotic almost sure posterior variance reductions for $U$ and $V$:
\begin{equation}
\begin{aligned}
    &\lim_{N,M,P\to\infty}{\rm Prob}\left[\frac 1 N {\rm Var}[U|{\bf d};\boldsymbol\beta_0]\leq \frac 1 N {\rm Var}[U;\boldsymbol\beta_0]-(c_{\mathcal I}-\epsilon) \right]=1
    \\
    &\lim_{N,M,P\to\infty}{\rm Prob}\left[\frac 1 P {\rm Var}[V|{\bf d};\boldsymbol\beta_0]\leq \frac 1 P {\rm Var}[V;\boldsymbol\beta_0]-(c_{\mathcal I}-\epsilon) \right]=1
\end{aligned}
\end{equation}

The asymptotic almost sure leading-order stationarity and concavity (eqs.~\ref{eq:almostsureleadingorderstationarity_lnPform} and \ref{eq:almostsuresufficientconvexity}) of the log marginal likelihood of the hyperparameters $\ln P ({\bf d};\boldsymbol\beta)$ at the true hyperparameter set $\boldsymbol\beta=\boldsymbol\beta_0$ lead to a single stationary point of $\ln P ({\bf d};\boldsymbol\beta)$ in the $\epsilon$-neighborhood of $\boldsymbol\beta_0$ asymptotically with probability 1.
\begin{theorem}[Existence of the stationary point near the true hyperparameters]\label{th:localsufficientconvexity}
If exists, the true hyperparameter vector $\boldsymbol\beta_0$ is 
located within the $\epsilon$-neighborhood of one single stationary point in $\ln P({\bf d};\boldsymbol\beta)$ 
for the two-hyperparameter general linear models (eqs.~\ref{eq:twohyperparameterform})  with probability 1 in the limit $N,M\to\infty$ of $N/M=const.$, as long as the associated regularized least square estimate is well-posed. 
\end{theorem}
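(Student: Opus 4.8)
The plan is to lift the two asymptotic almost-sure facts already obtained at $\boldsymbol\beta_0$ — leading-order stationarity, eq.~(\ref{eq:almostsureleadingorderstationarity_lnPform}), and leading-order strict concavity, eq.~(\ref{eq:almostsuresufficientconvexity}) — to statements valid on a whole fixed-radius ball about $\boldsymbol\beta_0$, and then to finish by a deterministic concavity argument. Write $g_N(\boldsymbol\beta):=\tfrac1N\partial_{\boldsymbol\beta}\ln P({\bf d};\boldsymbol\beta)$, so that its Jacobian is $Dg_N(\boldsymbol\beta)=-\tfrac1N\mathcal I({\bf d};\boldsymbol\beta)$ with $\mathcal I:=-\partial^2_{\boldsymbol\beta}\ln P$. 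Because $\boldsymbol\beta_0\in(0,\infty)^2$, I would fix once and for all a sufficiently small radius $r\in(0,\epsilon)$ with $\bar B(\boldsymbol\beta_0,r)\subset(0,\infty)^2$ (how small is decided in Step~1); on this ball the ratio $\lambda=\beta_{\rm a}/\beta_{\rm d}$ stays in a compact subinterval $[\lambda_-,\lambda_+]\subset(0,\infty)$.

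\textbf{Step 1: uniform concavity on a fixed ball.} By eq.~(\ref{eq:almostsuresufficientconvexity}), for any fixed $\delta>0$ one has $\tfrac1N\mathcal I({\bf d};\boldsymbol\beta_0)\succeq(c_{\mathcal I}-\delta){\bf I}$ with probability tending to $1$, where the constant $c_{\mathcal I}=|\langle\mathcal I_0\rangle_{\boldsymbol\beta_0}|/(N\,{\rm Tr}\langle\mathcal I_0\rangle_{\boldsymbol\beta_0})$ of eqs.~(\ref{eq:boundforFisherinfo})--(\ref{eq:cdetfordeterminant}) is $\Theta(1)$ and strictly positive, since $c_{\rm det}>0$ under ${\bf G}\not\propto{\bf H}^{\rm T}{\bf E}^{-1}{\bf H}$ and both $N+P-M$ and ${\rm Tr}\langle\mathcal I_0\rangle_{\boldsymbol\beta_0}$ are $\Theta(N)$ under the well-posedness hypothesis and the standing convention ${\rm Tr}({\bf A})=\Theta[{\rm rk}({\bf A})]$; take $\delta=c_{\mathcal I}/4$. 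To pass from $\boldsymbol\beta_0$ to the ball, I would expand the Hessian to first order about $\boldsymbol\beta_0$ with Lagrange remainder,
\[
\tfrac1N\mathcal I({\bf d};\boldsymbol\beta)=\tfrac1N\mathcal I({\bf d};\boldsymbol\beta_0)+O\!\Big(r\sup_{\boldsymbol\beta'\in\bar B(\boldsymbol\beta_0,r)}\big\|\tfrac1N\partial^3_{\boldsymbol\beta}\ln P({\bf d};\boldsymbol\beta')\big\|\Big),
\]
and bound the remainder by showing $\sup_{\boldsymbol\beta'\in\bar B(\boldsymbol\beta_0,r)}\|\tfrac1N\partial^3_{\boldsymbol\beta}\ln P\|=\mathcal O_p(1)$. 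Differentiating eqs.~(\ref{eq:secondderiv}) once more, every third $\boldsymbol\beta$-derivative of $\ln P$ is a finite combination of $U_*$, $V_*$, their $\lambda$-derivatives and traces ${\rm Tr}[{\bf J}^k(\lambda)]$; the deterministic traces are $\Theta(N)$ with operator-norm-bounded summands, while $U_*$, $V_*$ and their $\lambda$-derivatives are quadratic forms ${\bf d}^{\rm T}{\bf M}(\lambda){\bf d}$ with $\|{\bf M}(\lambda)\|_{\rm op}=\mathcal O(1)$ and ${\rm rk}\,{\bf M}(\lambda)=\mathcal O(N)$ uniformly on $[\lambda_-,\lambda_+]$ (cf.\ eq.~\ref{eq:effecitiveposteriorweightfunction}), so for Gaussian ${\bf d}$ the second-moment estimates used for eqs.~(\ref{eq:almostsureleadingorderstationarity}) and (\ref{eq:almostsureconvergenceFisher}), together with a covering/equicontinuity argument in $\lambda$, give the claimed $\mathcal O_p(1)$ bound. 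Shrinking the fixed $r$ to absorb the remainder then yields, with probability tending to $1$, $\tfrac1N\mathcal I({\bf d};\boldsymbol\beta)\succeq c_*{\bf I}$ for every $\boldsymbol\beta\in\bar B(\boldsymbol\beta_0,r)$, with $c_*:=c_{\mathcal I}/2$.

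\textbf{Step 2: deterministic conclusion.} On the event $A_N$ that this uniform curvature bound holds and $|g_N(\boldsymbol\beta_0)|<c_*N^{-1/4}$ — of probability tending to $1$ by Step~1 and eqs.~(\ref{eq:almostsureleadingorderstationarity}), (\ref{eq:fluctuationprobabilitybound}) via Chebyshev — put $\rho_N:=N^{-1/4}$, so $\rho_N\to0$ and $\rho_N<r$ for $N$ large. By the fundamental theorem of calculus along $[\boldsymbol\beta_0,\boldsymbol\beta]$ and the curvature bound, every $\boldsymbol\beta$ with $|\boldsymbol\beta-\boldsymbol\beta_0|=\rho_N$ satisfies
\[
(\boldsymbol\beta-\boldsymbol\beta_0)^{\rm T}g_N(\boldsymbol\beta)\le\rho_N|g_N(\boldsymbol\beta_0)|-c_*\rho_N^2<0,
\]
so the gradient of $\ln P$ points strictly inward on the sphere $\partial\bar B(\boldsymbol\beta_0,\rho_N)$. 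Since $\ln P({\bf d};\cdot)$ is strictly concave on the convex set $\bar B(\boldsymbol\beta_0,r)\supset\bar B(\boldsymbol\beta_0,\rho_N)$, its maximum over $\bar B(\boldsymbol\beta_0,\rho_N)$ is attained at an interior point — the boundary being excluded by the inward-pointing gradient — which is therefore a stationary point of $\ln P({\bf d};\boldsymbol\beta)$, and strict concavity on $\bar B(\boldsymbol\beta_0,r)$ forbids any second stationary point there. Hence, with probability tending to $1$, $\ln P({\bf d};\boldsymbol\beta)$ possesses a single stationary point in $\bar B(\boldsymbol\beta_0,r)$ and it lies within distance $\rho_N<\epsilon$ of $\boldsymbol\beta_0$; equivalently $\boldsymbol\beta_0$ lies in the $\epsilon$-neighborhood of that single stationary point. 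Global uniqueness of the stationary point of $\ln P({\bf d};\boldsymbol\beta)$ is deferred to the next subsection.

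\textbf{Main obstacle.} The one genuinely new ingredient is Step~1: the passage from estimates established pointwise at $\boldsymbol\beta_0$ to estimates uniform over a ball whose radius does not shrink with $N$. The Taylor-with-remainder reduction replaces this by the single requirement $\sup_{\lambda\in[\lambda_-,\lambda_+]}\|\tfrac1N\partial^3_{\boldsymbol\beta}\ln P\|=\mathcal O_p(1)$, which in turn rests only on uniform operator-norm and rank bounds for the matrix families ${\bf C}'(\lambda)$, ${\bf J}(\lambda)$, $\tilde{\bf E}(\lambda)$ on the compact $\lambda$-interval, together with the Gaussian moment computations already performed above; I expect this to be routine but somewhat tedious bookkeeping, and it is the part I would write out in full.
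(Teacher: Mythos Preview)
Your approach is essentially the paper's: both combine the leading-order stationarity (eq.~\ref{eq:almostsureleadingorderstationarity_lnPform}) with the leading-order concavity (eq.~\ref{eq:almostsuresufficientconvexity}), extend the Hessian control from $\boldsymbol\beta_0$ to a neighborhood via a Taylor remainder, and finish by a gradient-sign argument on the boundary of a small region; the paper packages your third-derivative bound into a single random coefficient $c_{\rm TS}=1+\mathcal O(\epsilon_\beta)$ and checks the axial gradient components at $\boldsymbol\beta_0\pm\epsilon_\beta\hat x_{\rm d}$, $\boldsymbol\beta_0\pm\epsilon_\beta\hat x_{\rm a}$, whereas you use an inward-gradient argument on a sphere of radius $\rho_N=N^{-1/4}$. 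Your Step~1 is more explicit about the uniform-in-$\lambda$ control that the paper asserts through $c_{\rm TS}=1+\mathcal O(\epsilon_\beta)$, and your uniqueness via strict concavity on a fixed-radius ball is cleaner than the paper's informal ``multiple stationary points effectively merge'' remark, but the substance is the same.
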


\begin{proof}[Proof of Theorem~\ref{th:localsufficientconvexity}]
The following corollary of eq.~(\ref{eq:almostsureleadingorderstationarity_lnPform}) with eq.~(\ref{eq:almostsuresufficientconvexity}) is useful to evaluate the gradient variation of $\ln P({\bf d};\boldsymbol\beta)$ 
around $\boldsymbol\beta_0$ within an $\epsilon_\beta$-interval along any travelling direction $\hat x$: 
\begin{equation}
    \lim_{N,M,P\to\infty}{\rm Prob}\left[\pm \hat x^{\rm T}\partial_{\boldsymbol\beta}\ln P ({\bf d};\boldsymbol\beta_0\pm \epsilon_\beta\hat x)> 0\right]=0,
\label{eq:gradientbound}
\end{equation} 
which is obtained as follows [where we simplify $P ({\bf d};\boldsymbol\beta)$ as $L(\boldsymbol\beta)$]:
\begin{equation}
\begin{aligned}
    &
    {\rm Prob}\left[\pm \hat x^{\rm T}\partial_{\boldsymbol\beta}\ln L (\boldsymbol\beta_0\pm \epsilon_\beta\hat x)> 0\right]
    \\=&
    {\rm Prob}\left[\pm \frac 1 N \hat x^{\rm T}\partial_{\boldsymbol\beta}\ln L (\boldsymbol\beta_0)
    -\frac {c_{\rm TS}\epsilon_\beta} N 
    \hat x^{\rm T}\mathcal I_0\hat x
    > 0\right]
    \\\leq &
    {\rm Prob}\left[\pm \frac 1 N \hat x^{\rm T}\partial_{\boldsymbol\beta}\ln L (\boldsymbol\beta_0)
    -c_{\rm TS}\epsilon_\beta(c_{\mathcal I}-\epsilon)
    > 0\right]
    +
    {\rm Prob}\left[\frac 1 N 
    \hat x^{\rm T}\mathcal I_0\hat x<c_{\mathcal I}-\epsilon
    \right]
    \\\leq &
    {\rm Prob}\left[\left|\frac 1 N \hat x^{\rm T}\partial_{\boldsymbol\beta}\ln L (\boldsymbol\beta_0)\right|
    > c_{\rm TS}\epsilon_\beta(c_{\mathcal I}-\epsilon)\right]
    +
    {\rm Prob}\left[\frac 1 N 
    \hat x^{\rm T}\mathcal I_0\hat x<c_{\mathcal I}-\epsilon
    \right]
    \\\to&0 \hspace{4pt}(N,M,P\to\infty),
\end{aligned}
\label{eq:boundinglocalgradient}
\end{equation}
where the first transform employs a Taylor series with defining an $\mathcal O(1)$ coefficient,
\begin{equation}
    c_{\rm TS}:=
    \left[\epsilon_\beta \hat x^{\rm T}\frac{\partial^2\ln P}{\partial\boldsymbol\beta\partial\boldsymbol\beta^{\rm T}} ({\bf d};\boldsymbol\beta_0)\hat x\right]^{-1}
    \int^{\epsilon_\beta}_0d\delta\beta^\prime
    \hat x^{\rm T}\frac{\partial^2\ln P}{\partial\boldsymbol\beta\partial\boldsymbol\beta^{\rm T}} ({\bf d};\boldsymbol\beta_0+\delta\beta^\prime\hat x)\hat x=1+\mathcal O(\epsilon_\beta),
\end{equation}
and the left transforms utilize eqs.~(\ref{eq:almostsureleadingorderstationarity_lnPform}) and (\ref{eq:almostsuresufficientconvexity}) using a small positive number $\epsilon$. 
The symmetric quadratic of the Fisher information matrix plays a role of the absolute normal curvature on the log probability landscape in eq.~(\ref{eq:boundinglocalgradient}) as in information geometry~\citep{amari2000methods,fisher1922mathematical}. 
Equation~(\ref{eq:gradientbound}) immediately concludes a stationary point of the marginal likelihood of the hyperparameters almost surely exists within the $\epsilon$-neighborhood of $\boldsymbol\beta_0$: using unit vectors $\hat x_{\rm d}$ and $\hat x_{\rm a}$ respectively parallel to the $\beta_{\rm d}$ and $\beta_{\rm a}$ axes, 
\begin{equation}
\begin{aligned}
    &
    {\rm Prob}\left[\bigcap_{\hat x=\hat x_{\rm d},\hat x_{\rm a}} [\left( \hat x^{\rm T}\partial_{\boldsymbol\beta}\ln P ({\bf d};\boldsymbol\beta_0- \epsilon_\beta\hat x)\geq 0\right)
    \cap
    \left(\hat x^{\rm T}\partial_{\boldsymbol\beta}\ln P ({\bf d};\boldsymbol\beta_0+ \epsilon_\beta\hat x)\leq 0\right)]\right]
    \\&\geq 1 -
    \sum_{\pm=\pm1}\sum_{\hat x=\hat x_{\rm d},\hat x_{\rm a}}
    {\rm Prob}\left[\pm \hat x^{\rm T}\partial_{\boldsymbol\beta}\ln P ({\bf d};\boldsymbol\beta_0\pm \epsilon_\beta\hat x)> 0\right]
    \to1 \hspace{4pt}(N,M,P\to\infty).
\end{aligned}
\label{eq:stationarityproofbody}
\end{equation}
Considering $\epsilon\geq\sqrt{2}\epsilon_\beta$ with $\epsilon_{\beta}<\beta_{d0}, \beta_{a0}$, 
(in)eq.~(\ref{eq:stationarityproofbody}) assures the almost sure existence of at least one stationary point within the $\epsilon$-neighborhood of $\boldsymbol\beta_0$ for the marginal likelihood of the hyperparameters. 
The oneness of the stationary point also follows by taking a sufficiently small $\epsilon$ compared to the characteristic hyperparameter scale of the $\ln P ({\bf d}; \boldsymbol\beta)$ variation; and even if not, the multiple stationary points effectively merge in the limit of infinitesimal $\epsilon$. 
Symbolically, there is asymptotically almost surely an $\mathcal O(1/\sqrt{N})$ vector $\boldsymbol\epsilon_*$ such that
\begin{equation}
    \left.\frac{\partial \ln P({\bf d};\boldsymbol\beta)}{\partial \boldsymbol\beta}\right|_{\boldsymbol\beta=\boldsymbol\beta_0+\boldsymbol\epsilon_*}=0.
\end{equation}
\end{proof}

\subsection{Global analysis: A complete proof}
\label{sec:proofwithoutassuminguniquestationarity}

Now that we saw there is asymptotically almost surely a local maximum of the marginal likelihood of the hyperparameters $P ({\bf d};\boldsymbol\beta)$ within the $\epsilon$-neighborhood of the true hyperparameter set $\boldsymbol\beta=\boldsymbol\beta_0$, 
we last show it is asymptotically almost surely the mode of $P ({\bf d};\boldsymbol\beta)$. 
The associated relation is derived with the following strategy. 
We first show the leading-order convergence in probability of the empirical cross entropy $-\ln P ({\bf d};\boldsymbol\beta)$ of $P ({\bf d};\boldsymbol\beta)$ relative to $P ({\bf d};\boldsymbol\beta_0)$ for a single data sample ${\bf d}$, besides those of its first (and second) derivatives. 
Next we find the zero point of $\partial_{\boldsymbol\beta}\langle\ln P ({\bf d};\boldsymbol\beta)\rangle_{\boldsymbol\beta_0}=\langle\partial_{\boldsymbol\beta}\ln P ({\bf d};\boldsymbol\beta)\rangle_{\boldsymbol\beta_0}$ is uniquely the mode $\boldsymbol\beta=\boldsymbol\beta_0$ of $\langle\ln P ({\bf d};\boldsymbol\beta)\rangle_{\boldsymbol\beta_0}$, that is the uniqueness of the stationary point in the cross entropy (and the Kullback-Leibler divergence) of $P ({\bf d};\boldsymbol\beta)$ relative to $P ({\bf d};\boldsymbol\beta_0)$. 
From these and the aforementioned concavity of $\ln P ({\bf d};\boldsymbol\beta)$ at $\boldsymbol\beta=\boldsymbol\beta_0$ in probability asymptotics, we derive the asymptotic almost sure modality of $\boldsymbol\beta=\boldsymbol\beta_0+\mathcal O(\epsilon)$ in $\ln P ({\bf d};\boldsymbol\beta)$. 

Because $P({\bf d};\boldsymbol\beta)$ for fixed $\lambda$ is a Gaussian formally identical to $P({\bf d}|{\bf a};\beta_{\rm d})$ (mapped if ${\bf E}^{-1}\to\tilde{\bf E}$) being concave with respect to $\beta_{\rm d}$, 
the nontrivial part of our mode search falls within the one-dimensional $\lambda$ domain. 
The $\boldsymbol\beta^\prime:=(\beta_{\rm d},\lambda)^{\rm T}$ space provides the following partial derivatives of $\ln P ({\bf d};\boldsymbol\beta)$: 
\begin{equation}
    \begin{aligned}
&\left(\frac{\partial\ln P({\bf d};\boldsymbol\beta)}{\partial \beta_{\rm d}}\right)_\lambda=-(U_*+\lambda V_*)+\frac{N+P-M}{2\beta_{\rm d}}
\\
&\left(\frac{\partial\ln P({\bf d};\boldsymbol\beta)}{\partial \lambda}\right)_{\beta_{\rm d}}=-\beta_{\rm d}V_*+\frac{P}{2\lambda}-\frac 1 2 {\rm Tr}[{\bf J}]
    \end{aligned}
    \label{eq:partialfirstderiv_betaprime}
\end{equation}
and 
\begin{equation}
    \begin{aligned}
&\left(\frac{\partial^2\ln P({\bf d};\boldsymbol\beta)}{\partial \beta_{\rm d}^2}\right)_\lambda=-\frac{N+P-M}{2\beta_{\rm d}^2}
\\
&\left(\frac{\partial^2\ln P({\bf d};\boldsymbol\beta)}{\partial \lambda^2}\right)_{\beta_{\rm d}}=-\beta_{\rm d}\frac{d V_*}{d\lambda}-\frac{P}{2\lambda^2}+\frac 1 2 {\rm Tr}[{\bf J}^2]
\\
&\left(\frac{\partial}{\partial\lambda} \left(\frac{\partial\ln P({\bf d};\boldsymbol\beta)}{\partial \beta_{\rm d}}\right)_\lambda\right)_{\beta_{\rm d}}=-V_*
    \end{aligned}
    \label{eq:secondderivinbetaprimespace}
\end{equation}
The random variables in the first derivatives of $\ln P({\bf d};\boldsymbol\beta)$ are limited to $U_*$ and $V_*$ (eqs.~\ref{eq:partialfirstderiv_beta} and \ref{eq:partialfirstderiv_betaprime} for $\boldsymbol\beta$ and $\boldsymbol\beta^\prime$, respectively), which have the following means and variances: 
\begin{equation}
\begin{aligned}
    \langle U_*({\bf d},\lambda)\rangle_{\boldsymbol\beta_0} &= \frac{1}{2\beta_{\rm d0}}{\rm Tr}[{\bf E}\tilde {\bf E}^2\tilde {\bf E}^\dag_0]
    \\
    \langle \lambda V_*({\bf d},\lambda)\rangle_{\boldsymbol\beta_0} &=\frac{1}{2\beta_{\rm d0}}{\rm Tr}[({\bf I}-{\bf E}\tilde {\bf E})\tilde {\bf E}\tilde {\bf E}^\dag_0]
    \\
    {\rm Var}[U_*({\bf d},\lambda);\boldsymbol\beta_0] &=\frac{1}{2\beta_{\rm d0}^2}{\rm Tr}[({\bf E}\tilde {\bf E}^2\tilde {\bf E}^\dag_0)^2]
    \\
    {\rm Var}[\lambda V_*({\bf d},\lambda);\boldsymbol\beta_0] &=\frac{1}{2\beta_{\rm d0}^2}{\rm Tr}\{[({\bf I}-{\bf E}\tilde {\bf E})\tilde {\bf E}\tilde {\bf E}^\dag_0]^2\}
\end{aligned}
\label{eq:UstarVstarmeanvariance}
\end{equation}
where $\tilde{\bf E}_0:=\tilde{\bf E}(\lambda_0)$. Note the null eigenspace of $\tilde {\bf E}$ is independent of the $\lambda$ value. 
Equations~(\ref{eq:UstarVstarmeanvariance}) with Chebyshev's inequality indicate $U_*$ and $V_*$ are convergent in probability in their leading orders: 
\begin{equation}
    \begin{aligned}
    \lim_{N,P,M\to\infty}
    {\rm Prob}\left[\left|\frac{U_*}{\langle U_*\rangle_{\boldsymbol\beta_0}}-1\right|>\epsilon\right]
    =
    \lim_{N,P,M\to\infty}
    {\rm Prob}\left[\left|\frac{V_*}{\langle  V_*\rangle_{\boldsymbol\beta_0}}-1\right|>\epsilon\right]
    =0,
    \end{aligned}
\end{equation}
where we evaluate ${\rm Tr}({\bf A})$ as $\Theta[{\rm rk}({\bf A})]$ for a matrix ${\bf A}$. 
Then, the gradient of $\ln P({\bf d};\boldsymbol\beta)$ is found convergent in its leading order both in the $\boldsymbol\beta$ and $\boldsymbol\beta^\prime$ space: 
\begin{equation}
\lim_{N,P,M\to\infty}
    {\rm Prob}\left[
    \frac{1}{N}\left|\partial_{\boldsymbol\beta}\ln P({\bf d};\boldsymbol\beta)-
    \langle
    \partial_{\boldsymbol\beta}\ln P({\bf d};\boldsymbol\beta)
    \rangle_{\boldsymbol\beta_0}
    \right|
    >\epsilon
    \right]=0.
    \label{eq:deterministiccrossentropygradient}
\end{equation}
We note the following transform assures one-to-one correspondence between the gradient of $\ln P({\bf d};\boldsymbol\beta)$ in the $\boldsymbol\beta$ and $\boldsymbol\beta^\prime$ spaces: 
\begin{equation}
    \partial_{\boldsymbol\beta}\ln P({\bf d};\boldsymbol\beta)=
    \left(
    \begin{array}{cc}
         1&  -\lambda/\beta_{\rm d}\\
         0& 1/\beta_{\rm d}
    \end{array}
    \right)
    \partial_{\boldsymbol\beta^\prime}\ln P({\bf d};\boldsymbol\beta).
\end{equation}
Further recalling eq.~(\ref{eq:priorpredictivemodel}), we find $\ln P({\bf d};\boldsymbol\beta)$ is also convergent in probability in the leading order:
\begin{equation}
    \lim_{N,P,M\to\infty}
    {\rm Prob}\left[
    \frac{1}{N+P-M}\left|-\ln P({\bf d};\boldsymbol\beta)-
    \langle
    -\ln P({\bf d};\boldsymbol\beta)
    \rangle_{\boldsymbol\beta_0}
    \right|
    >\epsilon
    \right]=0,
    \label{eq:deterministiccrossentropy}
\end{equation}
where the mean and variance of $U_*+\lambda V_*$ are both $ \mathcal O[{\rm rk}({\bf E}\tilde{\bf E})]=\mathcal O\{{\rm rk}[({\bf E}\tilde{\bf E})^2]\}=\mathcal O(N+P-M)$. 
Symbolically,
$
    -\ln P({\bf d};\boldsymbol\beta)=\langle
    -\ln P({\bf d};\boldsymbol\beta)
    \rangle_{\boldsymbol\beta_0}+\mathcal O_p(\sqrt{N+P-M})
$. 
Equation~(\ref{eq:deterministiccrossentropy}) implies leading-order determinicacy (convergence in probability) of the empirical cross entropy of $P({\bf d};\boldsymbol\beta)$ relative to $P({\bf d};\boldsymbol\beta_0)$ even for one data sample ${\bf d}$,
and the same for their Kullback-Leibler divergence:
\begin{equation}
    \lim_{N,P,M\to\infty}
    {\rm Prob}\left[
    \frac{1}{N+P-M}\left|-\ln\frac{P({\bf d};\boldsymbol\beta)}{P({\bf d};\boldsymbol\beta_0)}-
    \mathcal D[P({\bf d};\boldsymbol\beta)||P({\bf d};\boldsymbol\beta_0)]
    \right|
    >\epsilon
    \right]=0.
\end{equation}
The leading-order convergence in probability applies also to the second derivatives of $\ln P({\bf d};\boldsymbol\beta)$, although $\langle \ln P({\bf d};\boldsymbol\beta)\rangle_{\boldsymbol\beta_0}$ is not the Fisher information (generally positive semidefinite) and thus can be negative definite, except when $\boldsymbol\beta=\boldsymbol\beta_0$ (eq.~\ref{eq:almostsureconvergenceFisher}). 
Actually, the uniqueness of the stationary point in nonconvex $\langle- \ln P({\bf d};\boldsymbol\beta)\rangle_{\boldsymbol\beta_0}$ is a central subject in the following analysis. 
The above convergence in probability of $\ln P({\bf d};\boldsymbol\beta)$ and those of its derivatives intrinsically jointly follow given the permutability between the partial differentiation by $\boldsymbol\beta$ and averaging by the true data distribution $P({\bf d};\boldsymbol\beta_0)$ (i.e. $\partial_{\boldsymbol\beta}\langle\cdot\rangle_{\boldsymbol\beta_0}=\langle\partial_{\boldsymbol\beta}\cdot\rangle_{\boldsymbol\beta_0}$), 
although the calculation complicates as the leading order of each differentiated $\ln P({\bf d};\boldsymbol\beta)$ is a mixture of $N, M,P$-dependent parts as seen earlier. 

The above relations for the convergence in probability treat a single $\boldsymbol\beta$ value, rather than multiple $\boldsymbol\beta$ for the functional convergence. Yet, we can show the values of $\ln P({\bf d};\boldsymbol\beta)$ (as well as $\partial_{\boldsymbol\beta}\ln P({\bf d};\boldsymbol\beta)$ and $\partial_{\boldsymbol\beta}^2\ln P({\bf d};\boldsymbol\beta)$) for a finite number of $\boldsymbol\beta$ values $\{\boldsymbol\beta_n\}_{n=1,2,...}$ simultaneously satisfy the convergence in probability, considering that the probability of $\epsilon$-scattering at any of $\boldsymbol\beta$ values (${\rm Prob}[(|...|>\epsilon)\cup...]$) does not exceed the sum of the $\epsilon$-scattering probability over each of $\boldsymbol\beta$ values. 
Therefore, in its limit with sufficiently finer grids than the convergence radii of the Taylor series of $\ln P({\bf d};\boldsymbol\beta)$ within a sufficiently wide hyperparameter subdomain, we notice $\ln P({\bf d};\boldsymbol\beta)$ observes the convergence in probability even as a function.

The leading-order convergence in probability of $\partial_{\boldsymbol\beta}\ln P({\bf d};\boldsymbol\beta)$ 
implies
the primal profile of $\partial_{\boldsymbol\beta}\ln P({\bf d};\boldsymbol\beta)$ is drawn by $\langle \partial_{\boldsymbol\beta}\ln P({\bf d};\boldsymbol\beta)\rangle_{\boldsymbol\beta_0}= \partial_{\boldsymbol\beta}\langle\ln P({\bf d};\boldsymbol\beta)\rangle_{\boldsymbol\beta_0}$ (the negative gradient of the cross entropy and the Kullback-Leibler divergence), where remarkably we find the uniqueness of the zero point for this model setting as below. 
Since $\ln P({\bf d};\boldsymbol\beta)$ is always monotonic for fixed $\lambda$, we first partially maximize $\langle \partial_{\boldsymbol\beta}\ln P({\bf d};\boldsymbol\beta)\rangle_{\boldsymbol\beta_0}$ with respect to $\beta_{\rm d}$,
analogously to the profile likelihood method~\citep{murphy2000profile}. 
From the zero-point condition of $\partial_{\beta_{\rm d}}\langle \ln P({\bf d};\boldsymbol\beta)\rangle_{\boldsymbol\beta_0}=\langle \partial_{\beta_{\rm d}}\ln P({\bf d};\boldsymbol\beta)\rangle_{\boldsymbol\beta_0}$ with using eqs.~(\ref{eq:partialfirstderiv_betaprime}) and (\ref{eq:UstarVstarmeanvariance}), we have an expression of the conditional mode $\tilde \beta_{\rm d}(\lambda)$ with respect to $\beta_{\rm d}$ given $\lambda$:
\begin{equation}
    \tilde \beta_{\rm d}(\lambda)=\left(\frac{{\rm Tr}(\tilde{\bf  E}\tilde{\bf  E}_0^\dag)}{N+P-M}\right)^{-1}\beta_{\rm d0},
    \label{eq:implicativebetadform}
\end{equation}
which is a monotinically decreasing function of $\lambda$. 
Substituting $\beta_{\rm d}=\tilde \beta_{\rm d}$ into $\partial_\lambda\langle\ln P({\bf d};\boldsymbol\beta)\rangle_{\boldsymbol\beta_0}=\langle\partial_\lambda\ln P({\bf d};\boldsymbol\beta)\rangle_{\boldsymbol\beta_0}$, and defining ${\bf D}(\lambda):={\bf E}\tilde{\bf E}(\lambda)$ and $N^\prime=N+P-M$, we have
\begin{equation}
    \partial_\lambda\langle\ln P({\bf d};\tilde \beta_{\rm d},\lambda)\rangle_{\boldsymbol\beta_0}=\frac{N^\prime}{2\lambda}\frac{{\rm Tr}[{\bf D}^2{\bf D}_0^\dag]-{\rm Tr}[{\bf D}{\bf D}_0^\dag]{\rm Tr}[{\bf D}]/N^\prime}{{\rm Tr}[{\bf D}{\bf D}_0^\dag]},
    \label{eq:differentiatedprofilelikelihoodlikefunc}
\end{equation}
where ${\bf D}_0={\bf D}(\lambda_0)$. 
We use ${\rm Tr}[({\bf D D}^\dag{\bf H}-{\bf HGG}^\dag){\bf C}^\prime{\bf H}^{\rm T}{\bf E}^{-1}]=0$, which follows $\partial_\lambda\langle\ln P({\bf d};\tilde \beta_{\rm d},\lambda)\rangle_{\boldsymbol\beta_0}=0$ at $\lambda=\lambda_0$. 
The zero points of eq.~(\ref{eq:differentiatedprofilelikelihoodlikefunc}) and associated $\tilde\beta_{\rm d}$ values coincide with the zero points of $ \partial_{\boldsymbol\beta}\langle\ln P({\bf d};\boldsymbol\beta)\rangle_{\boldsymbol\beta_0}$, and actually the true hyperparameter $\lambda=\lambda_0$ is one solution of $ \partial_{\boldsymbol\beta}\langle\ln P({\bf d};\boldsymbol\beta)\rangle_{\boldsymbol\beta_0}={\bf 0}$ with $\tilde\beta_{\rm d}(\lambda_0)=\beta_{\rm d0}$. 
Since it is hard to fully solve eq.~(\ref{eq:differentiatedprofilelikelihoodlikefunc}) with respect to $\lambda$, 
we further differentiate eq.~(\ref{eq:differentiatedprofilelikelihoodlikefunc}) with respect to $\lambda_0$ and widen the solution space of $\partial_{\boldsymbol\beta}\langle\ln P({\bf d};\tilde \beta_{\rm d},\lambda)\rangle_{\boldsymbol\beta_0}={\bf 0}$ also to the $\lambda_0$ axis: 
\begin{equation}
    \begin{aligned}
    \partial_{\lambda_0}\partial_\lambda\langle\ln P({\bf d};\tilde \beta_{\rm d},\lambda)\rangle_{\boldsymbol\beta_0}=&
    \{-1+{\rm Tr}[{\bf D}({\bf D}_0^\dag-{\bf I})]/{\rm Tr}[{\bf D}{\bf D}_0^\dag]\}\partial_\lambda\ln P({\bf d};\tilde \beta_{\rm d},\lambda)/\lambda_0
    \\&+
    \frac{N^\prime}{2\lambda\lambda_0}\frac{{\rm Tr}[{\bf D}^2]/N^\prime-({\rm Tr}[{\bf D}]/N^\prime)^2}{{\rm Tr}[{\bf D}{\bf D}_0^\dag]/N^\prime}.
    \end{aligned}
\label{eq:pdfdiffedbylambda0}
\end{equation}
Note $d{\bf D}/d\lambda={\bf D}({\bf I}-{\bf D})/\lambda$.
Especially for the zero points $\lambda_*(\lambda_0)$ of $\partial_\lambda\langle\ln P({\bf d};\tilde \beta_{\rm d},\lambda)\rangle_{\boldsymbol\beta_0}$, such that
\begin{equation}
    \left.\partial_\lambda\langle\ln P({\bf d};\tilde \beta_{\rm d},\lambda)\rangle_{\boldsymbol\beta_0}\right|_{\lambda=\lambda_*(\lambda_0)}=0,
\end{equation}
eq.~(\ref{eq:pdfdiffedbylambda0}) is simplified to
\begin{equation}
    \left.\partial_{\lambda_0}\partial_\lambda\langle\ln P({\bf d};\tilde \beta_{\rm d},\lambda)\rangle_{\boldsymbol\beta_0}\right|_{\lambda=\lambda_*(\lambda_0)}=
    \left.
    \frac{N^\prime}{2\lambda\lambda_0}\frac{{\rm Tr}[{\bf D}^2]/N^\prime-({\rm Tr}[{\bf D}]/N^\prime)^2}{{\rm Tr}[{\bf D}{\bf D}_0^\dag]/N^\prime}\right|_{\lambda=\lambda_*(\lambda_0)}\geq 0.
    \label{eq:floweqforlambdastar}
\end{equation}
The numerator of eq.~(\ref{eq:floweqforlambdastar}) is proportional to the normalized determinant of the Fisher information matrix ($c_{\rm det}>0$; eq.~\ref{eq:cdetfordeterminant}), and the right-hand side of eq.~(\ref{eq:floweqforlambdastar}) is found nonnegative for $\lambda\leq \infty$ (positive for $\lambda< \infty$). 
Referring to eq.~(\ref{eq:floweqforlambdastar}), we trace the evolution of $\lambda_*$ as a function of the true hyperparameter $\lambda_0$. 
The $\lambda_*$ evolution begins with the initial condition for sufficiently large $\lambda_0=\lambda_{0\rm init}\gg1$. 
The right-hand side of eq.~(\ref{eq:differentiatedprofilelikelihoodlikefunc}) for large $\lambda_0$ does not possess any zero points within a moderate $\lambda$ range; for $\lambda=\mathcal O(1)$,
\begin{equation}
    \partial_\lambda\langle\ln P({\bf d};\tilde \beta_{\rm d},\lambda)\rangle_{\boldsymbol\beta_0}=
    \frac{N^\prime}{2\lambda}\frac{{\rm Tr}[{\bf D}^2]/N^\prime-({\rm Tr}[{\bf D}]/N^\prime)^2}{{\rm Tr}[{\bf D}]/N^\prime}+\mathcal O(1/\lambda_{0{\rm init}})>0. 
    \label{eq:expandedgradientformoderatelambda}
\end{equation}
Its possible zero point is then limited to $\lambda\gg1$, where eq.~(\ref{eq:differentiatedprofilelikelihoodlikefunc}) is reduced to 
\begin{equation}
    \partial_\lambda\langle\ln P({\bf d};\tilde \beta_{\rm d},\lambda)\rangle_{\boldsymbol\beta_0}=
    -\frac{N^\prime(\lambda_{0{\rm init}}^{-1}-\lambda^{-1})}{2\lambda^2}c_{\lambda\infty}+\mathcal O[(1/\lambda+1/\lambda_{0{\rm init}})^3/\lambda]
    \label{eq:expandedgradientforlargelambda}
\end{equation}
with 
\begin{equation}
    c_{\lambda\infty}:=\left(\frac{1}{N^\prime}{\rm Tr}[({\bf G}^\dag{\bf H}^{\rm T}{\bf E}^{-1}{\bf H})^2]-\left[\frac{1}{N^\prime}{\rm Tr}({\bf G}^\dag{\bf H}^{\rm T}{\bf E}^{-1}{\bf H})\right]^2\right)> 0,
\end{equation}
which is positive for $N+P>M$ when ${\bf G}\not\propto{\bf H}^{\rm T}{\bf E}^{-1}{\bf H}$ as $c_{\rm det}$ (see below eq.~\ref{eq:cdetfordeterminant}; one may use the aforementioned relations: ${\rm Tr}[({\bf D D}^\dag{\bf H}-{\bf HGG}^\dag){\bf C}^\prime{\bf H}^{\rm T}{\bf E}^{-1}]=0$ and $\partial_\lambda({\bf D}{\bf D}^\dag)=0$). 
Equation~(\ref{eq:expandedgradientforlargelambda}) shows $\lambda_*(\lambda_{0{\rm init}})\gg1$ can exist only around $\lambda=\lambda_{0{\rm init}}\gg1$ and $\lambda=\infty$. From eq.~(\ref{eq:differentiatedprofilelikelihoodlikefunc}), we find the associated zero points are 
\begin{equation}
    \lambda_*(\lambda_{0{\rm init}})=\lambda_{0{\rm init}}, \infty.
    \label{eq:lambdastarforunshiftedlambda0}
\end{equation} 
Next, we solve an initial value problem of calculating $\lambda_*$ for $\lambda_0<\lambda_{0{\rm init}}$ in a recursive manner with eq.~(\ref{eq:floweqforlambdastar}). 
A Taylor series of eq.~(\ref{eq:differentiatedprofilelikelihoodlikefunc}) using eq.~(\ref{eq:floweqforlambdastar}) near $\lambda=\lambda_*(\lambda_{0{\rm init}})$ concludes 
the zero points $\lambda_*$ for $\epsilon_\lambda$-decreasing $\lambda_0(=\lambda_{0{\rm init}}-\epsilon_\lambda)$ almost satisfy the zero-point condition when $\lambda_0=\lambda_{0{\rm init}}$:
\begin{equation}
    \left.\partial_\lambda\langle\ln P({\bf d};\tilde \beta_{\rm d},\lambda)\rangle_{\boldsymbol\beta_0}\right|_{\lambda=\lambda_*(\lambda_{0{\rm init}}-\epsilon_\lambda),\lambda_0=\lambda_{0{\rm init}}}=\mathcal O(\epsilon_\lambda).
    \label{eq:almoststationaryrecursion}
\end{equation}
Because $\partial_\lambda\langle\ln P({\bf d};\tilde \beta_{\rm d},\lambda)\rangle_{\boldsymbol\beta_0}$ is a smooth function of $\lambda_0$, 
eq.~(\ref{eq:almoststationaryrecursion}) means 
the evolved $1/\lambda_*$ keeps within the $\mathcal O(\epsilon_\lambda)$-neighborhoods of its initial values for sufficiently small $\epsilon_\lambda$; further considering eq.~(\ref{eq:lambdastarforunshiftedlambda0}), 
there is an $\mathcal O(1)$ positive constant $c_\lambda$ such that 
\begin{equation}
    \frac{1}{\lambda_*(\lambda_{0{\rm init}}-\epsilon_\lambda)}\in (0,c_\lambda\epsilon_\lambda)+\left(\lambda_{0{\rm init}}^{-1}-c_\lambda\epsilon_\lambda,\lambda_{0{\rm init}}^{-1}+c_\lambda\epsilon_\lambda\right). 
\end{equation}
The zero point of $\partial_\lambda\langle\ln P({\bf d};\tilde \beta_{\rm d},\lambda)\rangle_{\boldsymbol\beta_0}$ within the $\mathcal O(\epsilon_\lambda)$-neighborhood of $\lambda_0$ is uniquely $\lambda_*=\lambda_0$, given the local concavity of $\langle \ln P({\bf d};\boldsymbol\beta)\rangle_{\boldsymbol\beta_0}$ [negative definite $\partial_{\boldsymbol\beta}^2\langle \ln P({\bf d};\boldsymbol\beta)\rangle_{\boldsymbol\beta_0}=\langle \partial_{\boldsymbol\beta}^2\ln P({\bf d};\boldsymbol\beta)\rangle_{\boldsymbol\beta_0}$] at $\boldsymbol\beta=\boldsymbol\beta_0$ (eq.~\ref{eq:boundforFisherinfo}). 
Besides, $\lambda_*=\infty$ is a stable fixed point of the $\lambda_*$ flow driven by eq.~(\ref{eq:floweqforlambdastar}), where the right hand side is zero for $\lambda_*=\infty$ and positive for $\lambda_*<\infty$, and hence the zero point at infinity $\lambda_*=\infty$ does not generate proximate $\lambda_*$ in the real space. 
Consequently, we derive 
\begin{equation}
    \lambda_*(\lambda_{0{\rm init}}-\epsilon_\lambda)= \lambda_{0{\rm init}}-\epsilon_\lambda,\infty. 
    \label{eq:lambdastarforshiftedlambda0}
\end{equation}
Recursive derivation of eq.~(\ref{eq:lambdastarforshiftedlambda0}) from eq.~(\ref{eq:lambdastarforunshiftedlambda0}) inducts $\lambda_*(\lambda_0)=\lambda_0,\infty$ below near infinite $\lambda_0=\lambda_{0{\rm init}}$ where eq.~(\ref{eq:lambdastarforunshiftedlambda0}) follows eqs.~(\ref{eq:expandedgradientformoderatelambda}) and (\ref{eq:expandedgradientforlargelambda}) with eq.~(\ref{eq:differentiatedprofilelikelihoodlikefunc}), and 
especially for the positive real space of $\lambda$ ($0<\lambda<\infty$) we consider, 
\begin{equation}
    \lambda_*(\lambda_0)=\lambda_0.
    \label{eq:uniquestationarity_lambda}
\end{equation}
Equations~(\ref{eq:implicativebetadform}) and (\ref{eq:uniquestationarity_lambda}) show the stationary point of the cross entropy $\langle-\ln P({\bf d};\boldsymbol\beta)\rangle_{\boldsymbol\beta_0}$
(${\bf 0}=\partial_{\boldsymbol\beta}\langle\ln P({\bf d};\boldsymbol\beta)\rangle_{\boldsymbol\beta_0}$) 
is uniquely the mode of $\langle\ln P({\bf d};\boldsymbol\beta)\rangle_{\boldsymbol\beta_0}$, that is $\boldsymbol\beta=\boldsymbol\beta_0$: 
\begin{equation}
    \left.\frac{\partial \langle-\ln P({\bf d};\boldsymbol\beta)\rangle_{\boldsymbol\beta_0}}{\partial \boldsymbol\beta}\right|_{\boldsymbol\beta\neq\boldsymbol\beta_0}\neq {\bf 0}.
    \label{eq:uniquestationarity}
\end{equation}
Equation~(\ref{eq:uniquestationarity}) also indicates the stationary point of the Kullback-Leibler divergence of $P({\bf d};\boldsymbol\beta)$ from $P({\bf d};\boldsymbol\beta_0)$ is here uniquely $\boldsymbol\beta=\boldsymbol\beta_0$:
$
    \left.
    \partial_{\boldsymbol\beta} D[P({\bf d};\boldsymbol\beta)||P({\bf d};\boldsymbol\beta_0)]\right|_{\boldsymbol\beta\neq\boldsymbol\beta_0}\neq {\bf 0}
$. 

The log marginal likelihood of the hyperparameters $\ln P({\bf d};\boldsymbol\beta)$ has shown in its leading order identical to the average (eq.~\ref{eq:deterministiccrossentropy}), 
where we further saw the mode $\boldsymbol\beta=\boldsymbol\beta_0$ is a unique stationary point (eq.~\ref{eq:uniquestationarity}) and has a sufficiently large curvature (eq.~\ref{eq:boundforFisherinfo}). 
These anticipate and actually derive the desired consistency of the MMLE. 
The body part of the proof begins with showing the modality of $\boldsymbol\beta=\boldsymbol\beta_0$ in $\ln P({\bf d};\boldsymbol\beta_0)$ in a strong sense; considering a sufficiently small positive constant $\epsilon$, 
for $|\boldsymbol\beta-\boldsymbol\beta_0|>\epsilon$, we have
\begin{equation}
  \langle\ln P({\bf d};\boldsymbol\beta_0)\rangle_{\boldsymbol\beta_0}
  -
  \langle\ln P({\bf d};\boldsymbol\beta)\rangle_{\boldsymbol\beta_0}
  \geq  \frac{(N+P-M)\delta_{H}}{2}(\epsilon/\beta_{\rm d0})^2, 
  \label{eq:strongmodality}
\end{equation} 
where $\delta_{H}$ is a $\Theta (1)$ positive number such that 
\begin{equation}
    \delta_{H}:=\max \left[\left[6\left(1+\frac{2N^\prime-{\rm Tr}({\bf D}_0)}{N^\prime-{\rm Tr}({\bf D}_0)}\lambda_0\right)\right]^{-2},
    \frac{c_{\rm det}(\lambda_0)}{27}\left[\lambda_0+\frac{N^\prime-{\rm Tr}({\bf D}_0)}{N^\prime}(1+\lambda_0)\right]^{-2}
    \right].
    \label{eq:defofdeltaH}
\end{equation}
Note $N^\prime -{\rm Tr}({\bf D}_0)={\rm Tr}({\bf H CH}^{\rm T}{\bf E}^{-1}{\bf D}_0{\bf D}_0^\dag)>0$. 
Inequality~(\ref{eq:strongmodality}) naturally follows the uniqueness of the stationary point in $\langle -\ln P({\bf d};\boldsymbol\beta)\rangle_{\boldsymbol\beta_0}$ being an $\mathcal O(N+P-M)$ variable, and the derivation of eq.~(\ref{eq:strongmodality}) appears in the supplement~\citep{sato2022supplement} to obtain an explicit expression of $\delta_{H}$ (eq.~\ref{eq:defofdeltaH}). 
Equation~(\ref{eq:strongmodality}) concludes the local maximum $\boldsymbol\beta=\boldsymbol\beta_*$ within the $\epsilon$-neighborhood of $\boldsymbol\beta_0$ guaranteed to exist by Theorem~\ref{th:localsufficientconvexity} is the mode in $P({\bf d};\boldsymbol\beta)$, given the almost deterministic nature of the empirical cross entropy within the model family of $P({\bf d};\boldsymbol\beta)$ (eq.~\ref{eq:deterministiccrossentropy}).

\begin{theorem}[Consistency of the MMLE]\label{th:consistencyingenlinmodels}
The MMLE of the hyperparameters $\boldsymbol\beta$ is consistent in the two-hyperparameter general linear models (eqs.~\ref{eq:twohyperparameterform})
with large degrees of freedom $M\propto N$,
as long as the associated regularized least square estimate is well-posed. 
\end{theorem}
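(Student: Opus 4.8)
The plan is to show that the global maximizer of $P({\bf d};\boldsymbol\beta)$ is, asymptotically almost surely, the local maximizer $\boldsymbol\beta_*=\boldsymbol\beta_0+\mathcal O_p(1/\sqrt N)$ supplied by Theorem~\ref{th:localsufficientconvexity}. Three facts already established combine for this. The strong modality bound eq.~(\ref{eq:strongmodality}) makes the mean log-marginal-likelihood $\langle\ln P({\bf d};\boldsymbol\beta)\rangle_{\boldsymbol\beta_0}$ smaller at every $\boldsymbol\beta$ outside the $\epsilon$-ball around $\boldsymbol\beta_0$ by a gap of order $N+P-M$; the leading-order convergence in probability eq.~(\ref{eq:deterministiccrossentropy}), together with the functional upgrade noted below it, makes $\ln P({\bf d};\boldsymbol\beta)$ track that mean up to an $\mathcal O_p(\sqrt{N+P-M})$ error; and Theorem~\ref{th:localsufficientconvexity} locates, inside the $\epsilon$-ball, a unique stationary point which is a strict local maximum. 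Because a $\Theta(N+P-M)$ gap dominates an $\mathcal O_p(\sqrt{N+P-M})$ fluctuation, the global maximizer cannot sit outside the $\epsilon$-ball, and inside it must coincide with $\boldsymbol\beta_*$.

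Carrying this out, I would fix $\epsilon>0$ and first establish, with probability tending to one, the event
\[
\ln P({\bf d};\boldsymbol\beta_0)-\ln P({\bf d};\boldsymbol\beta)\ \ge\ \tfrac14(N+P-M)\delta_{H}(\epsilon/\beta_{\rm d0})^2\quad\text{for every }|\boldsymbol\beta-\boldsymbol\beta_0|>\epsilon .
\]
On a compact part of $\{|\boldsymbol\beta-\boldsymbol\beta_0|>\epsilon\}$ this is the functional form of eq.~(\ref{eq:deterministiccrossentropy}): cover by a finite grid finer than the Taylor radius of $(N+P-M)^{-1}\ln P({\bf d};\boldsymbol\beta)$, apply the union bound, interpolate using the convergent bounded second derivatives, and insert eq.~(\ref{eq:strongmodality}) at the grid points. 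For the noncompact tails $\beta_{\rm d}\to 0,\infty$ and $\lambda\to 0,\infty$ I would invoke the Gaussian structure of $P({\bf d};\boldsymbol\beta)$: at fixed $\lambda$ it is log-concave in $\beta_{\rm d}$ and tends to $-\infty$ at both ends, while the $\lambda$-profile analysis behind eqs.~(\ref{eq:differentiatedprofilelikelihoodlikefunc})--(\ref{eq:uniquestationarity}) shows $\langle\ln P({\bf d};\boldsymbol\beta)\rangle_{\boldsymbol\beta_0}$ has $\boldsymbol\beta_0$ as its unique interior critical point and diverges to $-\infty$ at the boundary; by the same convergence, both $\langle\ln P\rangle_{\boldsymbol\beta_0}$ and $\ln P$ attain their exterior suprema on a compact set where the grid argument applies.

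From the displayed event it follows that the MMLE $\hat{\boldsymbol\beta}={\rm argmax}_{\boldsymbol\beta}P({\bf d};\boldsymbol\beta)$ lies in the $\epsilon$-ball with probability tending to one, since $\ln P$ there is strictly below $\ln P({\bf d};\boldsymbol\beta_0)\le\ln P({\bf d};\boldsymbol\beta_*)$ (the latter because $\boldsymbol\beta_*$ is the unique interior critical point of the smooth $\ln P$ on the ball and the ball boundary is dominated by $\boldsymbol\beta_0$). Being an interior maximizer, $\hat{\boldsymbol\beta}$ is stationary, so by uniqueness of the interior critical point (Theorem~\ref{th:localsufficientconvexity}, using the strict concavity at $\boldsymbol\beta_0$ from eqs.~(\ref{eq:boundforFisherinfo}) and (\ref{eq:almostsuresufficientconvexity}) and the uniqueness eq.~(\ref{eq:uniquestationarity})) it equals $\boldsymbol\beta_*=\boldsymbol\beta_0+\mathcal O_p(1/\sqrt N)$. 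Hence ${\rm Prob}[|\hat{\boldsymbol\beta}-\boldsymbol\beta_0|>\epsilon]\to 0$ for every $\epsilon>0$, which is the asserted consistency (and in fact yields the rate $|\hat{\boldsymbol\beta}-\boldsymbol\beta_0|=\mathcal O_p(1/\sqrt N)$).

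The step I expect to be the main obstacle is the uniform control of $\ln P({\bf d};\boldsymbol\beta)$ on the noncompact hyperparameter domain: promoting the pointwise convergence eq.~(\ref{eq:deterministiccrossentropy}) to a statement holding simultaneously at every exterior $\boldsymbol\beta$ and, in particular, excluding any boundary spike as $\beta_{\rm d}\to 0,\infty$ or $\lambda\to 0,\infty$ that could attract the argmax. This is precisely where the log-concavity of $P({\bf d};\boldsymbol\beta)$ in $\beta_{\rm d}$ at fixed $\lambda$ and the global $\lambda$-profile analysis of eqs.~(\ref{eq:differentiatedprofilelikelihoodlikefunc})--(\ref{eq:uniquestationarity}) are indispensable; the remainder --- comparing the $\Theta(N+P-M)$ modality gap with the $\mathcal O_p(\sqrt{N+P-M})$ noise and invoking Theorem~\ref{th:localsufficientconvexity} for the fine localization --- is largely bookkeeping.
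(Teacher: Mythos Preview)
Your proposal is correct and follows essentially the same approach as the paper: combine the strong modality inequality eq.~(\ref{eq:strongmodality}) with the leading-order convergence eq.~(\ref{eq:deterministiccrossentropy}) to show that $\ln P({\bf d};\boldsymbol\beta_0)$ beats $\ln P({\bf d};\boldsymbol\beta)$ for exterior $\boldsymbol\beta$, then invoke Theorem~\ref{th:localsufficientconvexity} for the interior localization. The only implementation difference is that you aim for a uniform exterior bound (grid plus tail control via the $\beta_{\rm d}$-concavity and the $\lambda$-profile analysis), whereas the paper restricts the comparison to the other stationary points $\boldsymbol\beta_{*n}$ of $\ln P({\bf d};\boldsymbol\beta)$---the only exterior candidates for the argmax---and relies on the subsequent Theorem~\ref{th:uniquestationarityingenlinearmodels} to argue these are asymptotically absent; both routes draw on the same key ingredients and the same global information from eqs.~(\ref{eq:differentiatedprofilelikelihoodlikefunc})--(\ref{eq:uniquestationarity}).
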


\begin{proof}[Proof of Theorem~\ref{th:consistencyingenlinmodels}]
Defining $\delta\ln P({\bf d};\boldsymbol\beta)=\ln P({\bf d};\boldsymbol\beta)-\langle\ln P({\bf d};\boldsymbol\beta)\rangle_{\boldsymbol\beta_0}$ with 
the true data average $\langle\cdot\rangle:=\int d{\bf d}\cdot Q({\bf d})$ by $Q({\bf d})$,
for $|\boldsymbol\beta-\boldsymbol\beta_0|>\epsilon$, we have
\begin{equation}
\begin{aligned}
    &
    {\rm Prob}\left[\ln P({\bf d};\boldsymbol\beta_0)
    -
    \ln P({\bf d};\boldsymbol\beta)\leq 0\right]
    \\
    \leq&
    {\rm Prob}\left[(N+P-M)\delta_{H}\epsilon^2/2+\delta\ln P({\bf d};\boldsymbol\beta_0)-
    \delta\ln P({\bf d};\boldsymbol\beta) \leq 0\right]
    \\
    \leq&
    {\rm Prob}\left[(N+P-M)\delta_{H}\epsilon^2/2-2\max[|\delta\ln P({\bf d};\boldsymbol\beta)|,
    |\delta\ln P({\bf d};\boldsymbol\beta_0)|] \leq 0\right]
    \\\to& 0 \hspace{4pt}(N,M,P\to\infty),
\end{aligned}
\label{eq:maximalityweakform}
\end{equation}
where the first and last transform utilize eqs.~(\ref{eq:strongmodality}) and (\ref{eq:deterministiccrossentropy}), respectively. 
Our proof is almost completed by eq.~(\ref{eq:maximalityweakform}) that supposes a single value set of $\boldsymbol\beta$; 
because the $\epsilon$-proximity between $\boldsymbol\beta_0$ and $\hat{\boldsymbol\beta}$ is realized when a local maximum exists within the $\epsilon$-neighborhood of $\boldsymbol\beta_0$ and is larger than the probability values at the other stationary points $\boldsymbol\beta_{*n}$ if $\boldsymbol\beta_{*n}$ exist (although such a probability is negligible as mentioned below), we have the following evaluation using $\epsilon_{\beta}<\min[\epsilon/\sqrt{2},\beta_{d0}, \beta_{a0}]$:
\begin{equation}
\begin{aligned}
    {\rm Prob}[|\hat{\boldsymbol\beta}-\boldsymbol\beta_0|\leq\epsilon]\geq {\rm Prob}\left[
    \bigcap_{\hat x=\hat x_{\rm d},\hat x_{\rm a}} \bigcap_{\pm=\pm1}
    \left(\pm\hat x^{\rm T}\partial_{\boldsymbol\beta}\ln P ({\bf d};\boldsymbol\beta_0\pm \epsilon_\beta\hat x)\leq 0\right)
    \right.\\\left.
    \cap
    \max_{n;|\boldsymbol\beta_{*n}-\boldsymbol\beta_0|>\epsilon}\left(\ln P({\bf d};\boldsymbol\beta_0)
    -
    \ln P({\bf d};\boldsymbol\beta_{* n})>0\right)
    \right] 
    \\
    \geq 
    1-
    \sum_{\hat x=\hat x_{\rm d},\hat x_{\rm a}} \sum_{\pm=\pm1}{\rm Prob}\left[\pm\hat x^{\rm T}\partial_{\boldsymbol\beta}\ln P ({\bf d};\boldsymbol\beta_0\pm \epsilon_\beta\hat x)> 0
    \right]\\
    -\max_{n;|\boldsymbol\beta_{*n}-\boldsymbol\beta_0|>\epsilon}{\rm Prob}\left[\ln P({\bf d};\boldsymbol\beta_0)
    -
    \ln P({\bf d};\boldsymbol\beta_{*n})\leq 0
    \right] 
    \end{aligned}
    \label{eq:consistencyproofbody}
\end{equation}
where the subscript $n;|\boldsymbol\beta_{*n}-\boldsymbol\beta_0|>\epsilon$ represents $n$ such that $|\boldsymbol\beta_{*n}-\boldsymbol\beta_0|>\epsilon$; 
in the initial transform, the first proposition represents the existence of the local maximum (maxima) within the $\epsilon$-neighborhood, and the 
second proposition requires this local maximum (these maxima), equals to or larger than $P({\bf d};\boldsymbol\beta_0)$, to be larger than the other possible stationary points. 
In the asymptotics of ineq.~(\ref{eq:consistencyproofbody}) with eqs.~(\ref{eq:stationarityproofbody}) (Theorem~\ref{th:localsufficientconvexity}) and (\ref{eq:maximalityweakform}) for small $\epsilon$,
\begin{equation}
    \lim_{N,M,P\to\infty}{\rm Prob}[|\hat{\boldsymbol\beta}-\boldsymbol\beta_0|>\epsilon]=0,
\end{equation}
which was to be demonstrated. 

\end{proof}

The uniqueness of the stationary point in $\ln P({\bf d};\boldsymbol\beta)$ also follows for this model below a finite cutoff of $\lambda$  (denoted by $\lambda_c(<\infty)$), that is within a reasonable finite interval of $\boldsymbol\beta$, $\lambda<\lambda_c$. 
The following inequality may be useful; 
for any random variable $f$ and $\epsilon<\min\langle |f|\rangle$,
\begin{equation}
    {\rm Prob}[|f|<\epsilon]\leq
    {\rm Prob}[|f-\langle f \rangle |>\min\langle |f|\rangle-\epsilon].
    \label{eq:absoluteboundbyfluctuation}
\end{equation}
Inequality~(\ref{eq:absoluteboundbyfluctuation}) explains infinitesimal $f$ requires a stochastic deviation $f-\langle f\rangle$ by the absolute mean $\langle |f|\rangle$, which is bounded by $\min\langle |f|\rangle$. For $|\boldsymbol\beta-\boldsymbol\beta_0|>\epsilon\cap \lambda<\lambda_c$,  eqs.~(\ref{eq:deterministiccrossentropygradient}) and (\ref{eq:absoluteboundbyfluctuation}) yield
\begin{equation}
    \begin{aligned}
    &{\rm Prob}\left[\frac 1 {N^\prime} |\partial_{\boldsymbol\beta}\ln P({\bf d};\boldsymbol\beta)|<\epsilon\right]
    \\\leq&
    {\rm Prob}\left[\frac 1 {N^\prime} \left|\partial_{\boldsymbol\beta}\ln P({\bf d};\boldsymbol\beta)-\langle\partial_{\boldsymbol\beta}\ln P({\bf d};\boldsymbol\beta)\rangle\right|>\frac 1 {N^\prime} \min\left|\langle\partial_{\boldsymbol\beta}\ln P({\bf d};\boldsymbol\beta)\rangle\right|-\epsilon\right]
    \\\to& 0 \hspace{4pt} (N,M,P\to\infty).
    \end{aligned}
\end{equation}
In the last transform, we use $\partial_{\beta_{\rm d}}\ln P({\bf d};\boldsymbol\beta)=\mathcal O(N^\prime)$ for $|\lambda-\lambda_0|>\epsilon$ and
$\partial_\lambda\ln P({\bf d};\tilde\beta_{\rm d},\lambda)=\mathcal O(N^\prime)$ for $|\lambda-\lambda_0|>\epsilon/\beta_{\rm d0}\cap \lambda<\lambda_{\rm c}$; this conditional branch is parallel to the derivation of eq.~(\ref{eq:strongmodality}) thus not detailed here. The cross entropy $\langle-\ln P({\bf d};\boldsymbol\beta) \rangle$ is almost stationary at $\lambda\to\infty$, $\beta_{\rm d}\to\tilde\beta_{\rm d}(\infty)$, and then we require the additional assumption of finite $\lambda$ here for showing the above bound. 
The same logic applies to multiple $\boldsymbol\beta$ values even with arbitrarily large $\lambda_{\rm c}$, and the uniqueness of the stationary point, which at least exists with probability 1 given Theorem~\ref{th:localsufficientconvexity}, formally follows in the unlimited original $\boldsymbol\beta$ ($\lambda$) domain. 

\begin{theorem}[Unique existence of the stationary point in the empirical and generalization cross-entropy losses]\label{th:uniquestationarityingenlinearmodels}
The two-hyperparameter general linear models (eqs.~\ref{eq:twohyperparameterform}) defines 
a unique stationary point in 
$\langle \ln P({\bf d};\boldsymbol\beta)\rangle_{\boldsymbol\beta_0}$ 
and asymptotically almost surely  also in
$\ln P({\bf d};\boldsymbol\beta)$ for ${\bf d}\sim P({\bf d};\boldsymbol\beta_0)$, 
as long as the associated regularized least square estimate is well-posed. 
\end{theorem}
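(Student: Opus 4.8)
The plan is to establish the two assertions in turn: first the \emph{exact} uniqueness of the stationary point of the population cross-entropy $\langle\ln P({\bf d};\boldsymbol\beta)\rangle_{\boldsymbol\beta_0}$, and then, by transferring that fact through the leading-order convergence in probability of $\partial_{\boldsymbol\beta}\ln P({\bf d};\boldsymbol\beta)$, the asymptotic almost-sure uniqueness of the stationary point of the single-sample empirical cross-entropy $\ln P({\bf d};\boldsymbol\beta)$ for ${\bf d}\sim P({\bf d};\boldsymbol\beta_0)$.

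For the population part I would exploit that $\ln P({\bf d};\boldsymbol\beta)$ is strictly concave in $\beta_{\rm d}$ at fixed $\lambda$ (eq.~\ref{eq:secondderivinbetaprimespace}), so a profile-type reduction to the single variable $\lambda$ is legitimate: the conditional mode $\tilde\beta_{\rm d}(\lambda)$ is available in closed form by eq.~(\ref{eq:implicativebetadform}), and the stationary points of $\langle\ln P\rangle_{\boldsymbol\beta_0}$ are in bijection with the zeros of the one-dimensional profile derivative eq.~(\ref{eq:differentiatedprofilelikelihoodlikefunc}). Since that equation has no closed-form solution, I would instead track its zero set $\lambda_*(\lambda_0)$ as the (formally free) parameter $\lambda_0$ is varied, using the flow identity eq.~(\ref{eq:floweqforlambdastar}) whose right-hand side is proportional to the normalized Fisher determinant $c_{\rm det}>0$ (eq.~\ref{eq:cdetfordeterminant}) and is therefore strictly positive for $\lambda_0<\infty$. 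Starting the flow at $\lambda_0=\lambda_{0{\rm init}}\gg 1$, where eqs.~(\ref{eq:expandedgradientformoderatelambda})--(\ref{eq:expandedgradientforlargelambda}) together with the positivity of $c_{\lambda\infty}$ pin the zeros to $\lambda_*\in\{\lambda_{0{\rm init}},\infty\}$, and propagating downward recursively via eq.~(\ref{eq:almoststationaryrecursion}), one obtains $\lambda_*(\lambda_0)\in\{\lambda_0,\infty\}$; restricting to the open positive axis $0<\lambda<\infty$ and invoking the negative-definiteness of $\partial_{\boldsymbol\beta}^2\langle\ln P\rangle_{\boldsymbol\beta_0}$ at $\boldsymbol\beta_0$ (eq.~\ref{eq:boundforFisherinfo}) yields precisely eq.~(\ref{eq:uniquestationarity}): $\boldsymbol\beta=\boldsymbol\beta_0$ is the unique stationary point of the cross-entropy, hence also of the Kullback--Leibler divergence $\mathcal D[P({\bf d};\boldsymbol\beta)||P({\bf d};\boldsymbol\beta_0)]$.

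For the empirical part I would transfer this using eq.~(\ref{eq:deterministiccrossentropygradient}): on $\{|\boldsymbol\beta-\boldsymbol\beta_0|>\epsilon\}\cap\{\lambda<\lambda_c\}$ the mean gradient has norm of exact order $N'=N+P-M$ --- component-wise, $\partial_{\beta_{\rm d}}\langle\ln P\rangle_{\boldsymbol\beta_0}$ is of order $N'$ whenever $|\lambda-\lambda_0|>\epsilon$, and $\partial_\lambda\langle\ln P({\bf d};\tilde\beta_{\rm d},\lambda)\rangle_{\boldsymbol\beta_0}$ is of order $N'$ whenever $|\lambda-\lambda_0|>\epsilon/\beta_{\rm d0}$ and $\lambda<\lambda_c$, by the profile analysis just described --- so inequality~(\ref{eq:absoluteboundbyfluctuation}) bounds ${\rm Prob}[\,|\partial_{\boldsymbol\beta}\ln P({\bf d};\boldsymbol\beta)|<\epsilon'\,]$ by a fluctuation probability that tends to $0$. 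A union bound over a finite grid of $\boldsymbol\beta$-values finer than the convergence radius of the Taylor expansion of $\partial_{\boldsymbol\beta}\ln P$ upgrades this to a statement about the whole truncated domain, and since $\lambda_c$ may be taken arbitrarily large the exclusion of stationary points extends to the entire open $\lambda$-axis. Combined with Theorem~\ref{th:localsufficientconvexity}, which provides with probability tending to $1$ at least one stationary point, necessarily lying in the $\epsilon$-neighborhood of $\boldsymbol\beta_0$, this gives exactly one stationary point asymptotically almost surely.

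I expect the genuinely delicate point to be the boundary behaviour at $\lambda\to\infty$. The population cross-entropy is only \emph{almost} stationary there --- the flow eq.~(\ref{eq:floweqforlambdastar}) carries a stable fixed point at $\lambda_*=\infty$ --- so the clean uniqueness statement needs the finite cutoff $\lambda_c$ and the observation that this spurious zero at infinity does not seed nearby real zeros. Making this quantitative amounts to showing that $\min|\langle\partial_{\boldsymbol\beta}\ln P({\bf d};\boldsymbol\beta)\rangle_{\boldsymbol\beta_0}|$ stays of exact order $N'$ on $\{|\boldsymbol\beta-\boldsymbol\beta_0|>\epsilon\}\cap\{\lambda<\lambda_c\}$ with a lower constant that depends on $\epsilon$ and $\lambda_c$ but not on $N,M,P$, which in turn requires careful bookkeeping of which of the $N$-, $M$-, and $P$-dependent contributions dominates each derivative while the ratios $P/M$ and $M/N$ are held fixed.
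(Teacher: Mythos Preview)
Your proposal is correct and follows essentially the same route as the paper: the population uniqueness is obtained via the profile reduction to $\lambda$ (eq.~\ref{eq:implicativebetadform}), the flow argument in $\lambda_0$ driven by the strictly positive $c_{\rm det}$ (eqs.~\ref{eq:floweqforlambdastar}--\ref{eq:uniquestationarity}), and the empirical uniqueness is then transferred through eq.~(\ref{eq:deterministiccrossentropygradient}) and ineq.~(\ref{eq:absoluteboundbyfluctuation}) on $\{\lambda<\lambda_c\}$, with existence supplied by Theorem~\ref{th:localsufficientconvexity}. Your identification of the $\lambda\to\infty$ boundary as the delicate point, handled by the finite cutoff $\lambda_c$ together with the stability of the $\lambda_*=\infty$ fixed point, also matches the paper's treatment.
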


Besides the two-hyperparameter case (eqs.~\ref{eq:twohyperparameterform}) of the general linear model analyzed in this section, 
its one-hyperparameter form is also often employed for the regularized least square where $\beta_{\rm d}$ or $\beta_{\rm a}$ ($\sigma^2$ or $\rho^2$) or $\lambda$ is a unique unknown. 
For known $\beta_{\rm d}$, the stationary point search utilizes the following instead of eq.~(\ref{eq:differentiatedprofilelikelihoodlikefunc}) for the extremum condition with respect to $\lambda$:
\begin{equation}
    \langle\partial_\lambda\ln P({\bf d};\beta_{{\rm d}0},\lambda)\rangle_{\boldsymbol\beta_0}=\frac{1}{2\lambda}{\rm Tr}[{\bf HC}^\prime{\bf H}^{\rm T}{\bf E}^{-1}({\bf I}-{\bf D}{\bf D}_0^\dag)],
    \label{eq:differentiatedprofilelikelihoodlikefunc_knownbetad}
\end{equation}
which also defines the unique stationary point in $\langle\ln P({\bf d};\boldsymbol\beta)\rangle$; 
likewise, eq.~(\ref{eq:implicativebetadform}) uniquely determines the stationary point as $\beta_{\rm d0}$ for the case of
known $\beta_{\rm a}$ (or known $\lambda$). 
Thus, the same analysis proves the consistency of $\hat \beta$ [besides the asymptotic almost sure uniqueness of the stationary point in $\ln P({\bf d};\boldsymbol\beta)$] in one-hyperparameter cases. 
A simple proof with known $\sigma^2$ is previously shown in Ref.~\citep{mukhopadhyay2003parametric}. 
Remarkably, the uniqueness of the zero point in eq.~(\ref{eq:differentiatedprofilelikelihoodlikefunc_knownbetad}) holds even with ${\bf E}\tilde {\bf E}={\bf I}$ ($N=P=M\cap{\bf G}\propto{\bf H}^{\rm T}{\bf E}^{-1}$, yielding an ill-posed regularized least square estimate ${\bf a}_*$), which makes $\lambda$ unconstrained when both $\beta_{\rm d}$ and $\beta_{\rm a}$ are unknown. It asserts the reason for the consistency loss of $\hat{\boldsymbol\beta}$ in the ill-posed two-hyperparameter analysis may be that $\beta_{\rm d}$ and $\beta_{\rm a}$ are indistinguishable from the data in that ill-posed model. 
We last emphasize that our analysis with the zero-mean prior is applicable to the prior of non-zero means, by redefining the model parameters ${\bf a}^\prime$ with the prior average ${\bf a}_{\rm prior}^\prime$ as ${\bf a}^\prime:={\bf a}+{\bf a}_{\rm prior}^\prime$. 

Since $\hat{\boldsymbol\beta}$ is a consistent estimator, 
the regularized least square ${\bf a}_*$ using $\hat\lambda$ is asymptotically unbiased. 
Considering the stability of ${\bf a}_*$ for $\lambda>0$ in the well-posed case with
$N+P-M>0$ and ${\bf G}\not\propto{\bf H}^{\rm T}{\bf E}^{-1}{\bf E}$, ${\bf a}_*$ asymptotically combines the unbiasedness and stability: 
\begin{equation}
\begin{aligned}
    \langle a_{0i}-a_{*i}(\hat\lambda)\rangle_{\boldsymbol\beta_0}&\to 0
    \\
    \frac 1 N\sum_{i}\langle |a_{0i}- a_{*i}(\hat\lambda)|^2\rangle_{\boldsymbol\beta_0}&<\infty
\end{aligned}
\end{equation}
It contrasts with the the maximum likelihood estimate ${\bf a}_*(0)$ being unbiased but unstable when $N\leq M$:
    $\langle a_{0i}-a_{*i}(0)\rangle_{\boldsymbol\beta_0}
    =0$, 
    $\frac 1 N\sum_{i}\langle |a_{0i}- a_{*i}(0)|^2\rangle_{\boldsymbol\beta_0}
    \to\infty$.

\section{Essentials and generalization}
In this section, we generalize the proof of consistency and highlight the framework of our proof detailed in the previous section for the general linear models. 
The essential postulates of the proof are found in the extensive properties of the cost functions (\S\ref{sec:gen1}) and that of the empirical cross entropy (\S\ref{sec:gen_efficiency}). 
The proof is immediately generalized for nonlinear multiple cost functions with extensive properties in estimating the hyperparameters $\boldsymbol\beta_{\rm d}$ of the model-parameter likelihood (\S\ref{sec:gen2}). 
The asymptotic normality of the predictive distribution further extends the applicability even outside the exponential families, besides even to the true data distribution outside the model space (\S\ref{sec:gen_efficiency_app}). 

\subsection{Almost sure almost stationarity of the marginal likelihood of the hyperparameters and extensive properties of Gibbsian cost functions}
\label{sec:gen1}
Adopting the same logic as in the Gaussian ones, 
we obtain a similar results for general Gibbsian models (eqs.~\ref{eq:modeldefinition}). 

Similarly to eq.~(\ref{eq:fluctuationprobabilitybound}), 
Chebyshev's inequality and the variance decomposition yield
\begin{equation}
    {\rm Prob}\left[\left|\frac{\langle U_i\rangle_{\boldsymbol\beta_0|{\bf d}}}{\langle U_i\rangle_{\boldsymbol\beta_0}}-1\right|>\epsilon\right]
    \leq 
    \frac{{\rm Var}[U_i;\boldsymbol\beta_0]}{\epsilon^2\langle U_i\rangle_{\boldsymbol\beta_0}^2}
    \label{eq:fluctuationprobabilitybound_genlikelihood}
\end{equation}
and
\begin{equation}
    {\rm Prob}\left[\left|\frac{\langle V_i\rangle_{\boldsymbol\beta_0|{\bf d}}}{\langle V_i\rangle_{\boldsymbol\beta_0}}-1\right|>\epsilon\right]
    \leq 
    \frac{{\rm Var}[V_i;\boldsymbol\beta_0]}{\epsilon^2\langle V_i\rangle_{\boldsymbol\beta_0}^2}
    \label{eq:fluctuationprobabilitybound_genprior}
\end{equation}
The following lemma then holds with applicability to various Gibbsian statistical models. 

\begin{lemma}\label{le:almostsurealmoststationarity}
When the first and second cumulants of the cost functions are extensive, 
\begin{equation}
\begin{aligned}
    \forall i, \langle U_i\rangle_{\boldsymbol\beta}, {\rm Var}[U_i;\boldsymbol\beta]=\Theta (N)
    \\
    \langle V_j\rangle_{\boldsymbol\beta},{\rm Var}[V_j;\boldsymbol\beta]=\Theta (M)
    \label{eq:assumptionofalmostsurealmoststationarity}
\end{aligned}
\end{equation}
the marginal likelihood of the hyperparameters is almost surely stationary in leading orders:
\begin{equation}
    \lim_{N\to\infty} {\rm Prob}\left[\left|\frac{\langle U_i\rangle_{\boldsymbol\beta_0|{\bf d}}}{\langle U_i\rangle_{\boldsymbol\beta_0}}-1\right|>\epsilon\right]=\lim_{M\to\infty} {\rm Prob}\left[\left|\frac{\langle V_j\rangle_{\boldsymbol\beta_0|{\bf d}}}{\langle V_j\rangle_{\boldsymbol\beta_0}}-1\right|>\epsilon\right]=0,
    \label{eq:generalalmostsurealmoststationarity_costfunctionform}
\end{equation}
and for fixed $M/N$,
\begin{equation}
    \lim_{N,M\to\infty} {\rm Prob}\left[\frac 1 N\left|\partial_{\boldsymbol\beta}\ln P({\bf d};\boldsymbol\beta)\right|_{\boldsymbol\beta=\boldsymbol\beta_0}>\epsilon\right]=0.
    \label{eq:generalalmostsurealmoststationarity}
\end{equation}
\end{lemma}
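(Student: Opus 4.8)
The plan is to mirror the local-analysis step of \S\ref{eq:proof4gaussmodel}, replacing the explicit Gaussian moment computations by the abstract extensivity hypothesis (\ref{eq:assumptionofalmostsurealmoststationarity}); the two displayed conclusions then follow almost immediately from the identities already assembled in \S\ref{sec:MMLEinGibbsians}.

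First I would prove (\ref{eq:generalalmostsurealmoststationarity_costfunctionform}). By the mean-invariance identity (\ref{eq:avIbarel2}), under ${\bf d}\sim P({\bf d};\boldsymbol\beta_0)$ the random variable $\langle U_i\rangle_{\boldsymbol\beta_0|{\bf d}}$ has expectation $\langle U_i\rangle_{\boldsymbol\beta_0}$, and by the variance decomposition (\ref{eq:variancedecomposition}) its variance is the interclass variance, which is bounded above by ${\rm Var}[U_i;\boldsymbol\beta_0]$; combined with Chebyshev's inequality this is precisely (\ref{eq:fluctuationprobabilitybound_genlikelihood}), and the analogous statement for $V_j$ is (\ref{eq:fluctuationprobabilitybound_genprior}). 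Inserting the extensivity assumption (\ref{eq:assumptionofalmostsurealmoststationarity}), the right-hand side of (\ref{eq:fluctuationprobabilitybound_genlikelihood}) is $\Theta(N)/(\epsilon^2\Theta(N)^2)=\Theta(1/(N\epsilon^2))$, which vanishes as $N\to\infty$, while that of (\ref{eq:fluctuationprobabilitybound_genprior}) is $\Theta(1/(M\epsilon^2))\to0$ as $M\to\infty$; this yields (\ref{eq:generalalmostsurealmoststationarity_costfunctionform}). As in the Gaussian case, the essence is that extensivity makes the cost-function standard deviations $\Theta(\sqrt N)$ and $\Theta(\sqrt M)$ negligible against the means $\Theta(N)$ and $\Theta(M)$, so the posterior cost-function means are asymptotically pinned to their prior-predictive counterparts.

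Next I would derive (\ref{eq:generalalmostsurealmoststationarity}). By the cumulant-difference identity (\ref{eq:firstderivlnP}), at $\boldsymbol\beta=\boldsymbol\beta_0$ each component of the gradient is $\partial_{\beta_i}\ln P({\bf d};\boldsymbol\beta_0)=\langle\mathcal E_i\rangle_{\boldsymbol\beta_0}-\langle\mathcal E_i\rangle_{\boldsymbol\beta_0|{\bf d}}$, whose first term is a deterministic constant. For a likelihood component this difference equals $\langle U_i\rangle_{\boldsymbol\beta_0}\bigl(1-\langle U_i\rangle_{\boldsymbol\beta_0|{\bf d}}/\langle U_i\rangle_{\boldsymbol\beta_0}\bigr)$, which is $o_p(N)$ by (\ref{eq:generalalmostsurealmoststationarity_costfunctionform}) together with $\langle U_i\rangle_{\boldsymbol\beta_0}=\Theta(N)$; for a prior component it is $o_p(M)=o_p(N)$ since $M/N$ is held fixed. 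Dividing by $N$ and applying a union bound over the finite number $n_{\rm d}+n_{\rm a}$ of hyperparameter components --- the probability that the Euclidean norm of $N^{-1}\partial_{\boldsymbol\beta}\ln P({\bf d};\boldsymbol\beta_0)$ exceeds $\epsilon$ is at most the sum over components of the probability that a single rescaled component exceeds $\epsilon/\sqrt{n_{\rm d}+n_{\rm a}}$ --- yields (\ref{eq:generalalmostsurealmoststationarity}).

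The main point requiring care, rather than any genuine difficulty, is the passage from the componentwise statement (\ref{eq:generalalmostsurealmoststationarity_costfunctionform}) to the vector statement (\ref{eq:generalalmostsurealmoststationarity}): one must track the two distinct extensivity scales $N$ and $M$ and use the fixed ratio $M/N$ to normalise both contributions by the single factor $1/N$. All the structural ingredients --- the mean invariance (\ref{eq:avIbarel2}), the variance decomposition (\ref{eq:variancedecomposition}), and Chebyshev's inequality --- are already in place, so the lemma is essentially a repackaging of the local analysis behind Theorem~\ref{th:localsufficientconvexity} with the explicit Gaussian moments replaced by the hypothesis (\ref{eq:assumptionofalmostsurealmoststationarity}).
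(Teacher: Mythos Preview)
Your proposal is correct and follows the same approach as the paper: the paper simply states that ``similarly to eq.~(\ref{eq:fluctuationprobabilitybound}), Chebyshev's inequality and the variance decomposition yield'' the bounds (\ref{eq:fluctuationprobabilitybound_genlikelihood}) and (\ref{eq:fluctuationprobabilitybound_genprior}), from which the lemma follows under the extensivity hypothesis, with (\ref{eq:generalalmostsurealmoststationarity}) then obtained from (\ref{eq:firstderivlnP}) exactly as you describe. Your write-up is in fact more explicit than the paper's, particularly in spelling out the union bound over the finitely many hyperparameter components and the role of the fixed ratio $M/N$ in reducing both scales to a single $1/N$ normalisation.
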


Lemma~\ref{le:almostsurealmoststationarity} immediately assures the consistency of $\hat{\boldsymbol\beta}$ for sufficiently concave $\ln P({\bf d};\boldsymbol\beta)$.

\begin{theorem}[Consistency of the MMLE in the sufficiently globally concave marginal likelihood]\label{th:genconsistencywithsufficientconvexity}
When $\ln P({\bf d};\boldsymbol\beta)$ is globally concave in its leading order with respect to $\boldsymbol\beta$, or more weakly,
\begin{equation}
    \max_{\boldsymbol\beta}\lim_{N\to\infty}\frac 1 N \partial^2_{\beta_i} \ln P({\bf d};\boldsymbol\beta)<0,
    \label{eq:assumptionofsufficientglobalconvexity}
\end{equation}
then the MMLE is a consistent estimator for the hyperparameters conjugated with the extensive cost functions that satisfy eq.~(\ref{eq:assumptionofalmostsurealmoststationarity}). 
\end{theorem}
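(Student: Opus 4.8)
The plan is to combine the almost-sure leading-order stationarity of $\ln P({\bf d};\boldsymbol\beta)$ at the true value $\boldsymbol\beta_0$ (Lemma~\ref{le:almostsurealmoststationarity}) with the hypothesized leading-order concavity (ineq.~\ref{eq:assumptionofsufficientglobalconvexity}) through a mean-value argument on the gradient, exactly mirroring the local analysis behind Theorem~\ref{th:localsufficientconvexity} for the general linear models but now without having to locate or count the stationary points globally: concavity makes the stationary point unique and global, so the existence-and-maximality bookkeeping of ineqs.~(\ref{eq:stationarityproofbody})--(\ref{eq:consistencyproofbody}) collapses to a single localization estimate.

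First I would record that Lemma~\ref{le:almostsurealmoststationarity} gives $N^{-1}\partial_{\boldsymbol\beta}\ln P({\bf d};\boldsymbol\beta_0)=\mathcal O_p(N^{-1/2})$, i.e. it converges to zero in probability (eq.~\ref{eq:generalalmostsurealmoststationarity}). Second, from ineq.~(\ref{eq:assumptionofsufficientglobalconvexity}) I would extract a deterministic constant $c_*>0$ and argue that, with probability tending to one, the Hessian $-N^{-1}\partial^2_{\boldsymbol\beta}\ln P({\bf d};\boldsymbol\beta)$ dominates $c_*{\bf I}$ uniformly over the hyperparameter region relevant to the argument; since only the pure second partials are assumed controlled, this step uses the eigenvalue-by-trace-and-determinant bound of ineq.~(\ref{eq:eigenvaluebound}) together with the extensivity (\ref{eq:assumptionofalmostsurealmoststationarity}) of the cost-function cumulants to promote per-coordinate concavity to genuine strong concavity --- or, failing a full such bound, the argument is run direction by direction along the coordinate axes as in eqs.~(\ref{eq:boundinglocalgradient})--(\ref{eq:stationarityproofbody}), which is precisely what makes the weaker hypothesis sufficient. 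Third, because a concave $\ln P$ is maximized at an interior point $\hat{\boldsymbol\beta}$ with $\partial_{\boldsymbol\beta}\ln P({\bf d};\hat{\boldsymbol\beta})={\bf 0}$ (concavity together with the negativity of the leading-order gradient for extreme $\boldsymbol\beta$ also excludes escape of $\hat{\boldsymbol\beta}$ to the boundary of $(0,\infty)^{n_{\rm d}+n_{\rm a}}$ or to infinity), the fundamental theorem of calculus gives
\[
-\partial_{\boldsymbol\beta}\ln P({\bf d};\boldsymbol\beta_0)=\left(\int_0^1\left[-\partial^2_{\boldsymbol\beta}\ln P({\bf d};\boldsymbol\beta_0+t(\hat{\boldsymbol\beta}-\boldsymbol\beta_0))\right]dt\right)(\hat{\boldsymbol\beta}-\boldsymbol\beta_0),
\]
so that on the high-probability event of the preceding step $|\hat{\boldsymbol\beta}-\boldsymbol\beta_0|\le (c_* N)^{-1}\,|\partial_{\boldsymbol\beta}\ln P({\bf d};\boldsymbol\beta_0)|=\mathcal O_p(N^{-1/2})$. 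Finally, a Chebyshev/union-bound bookkeeping identical to ineq.~(\ref{eq:consistencyproofbody}) turns this into $\lim_{N,M\to\infty}{\rm Prob}[|\hat{\boldsymbol\beta}-\boldsymbol\beta_0|>\epsilon]=0$, the claimed consistency.

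The hard part will be the second step: upgrading the weak per-coordinate hypothesis (\ref{eq:assumptionofsufficientglobalconvexity}) into a uniform, probability-one strong-concavity bound over a region \emph{guaranteed} to contain $\hat{\boldsymbol\beta}$. Two subtleties live here --- controlling (or circumventing, by moving only along coordinate axes) the off-diagonal Hessian entries, and ensuring the bound holds not only at $\boldsymbol\beta_0$ but uniformly, so that a priori the maximizer cannot drift to where the leading-order concavity degrades or to the boundary of the parameter domain. Both are handled by the extensivity assumption (\ref{eq:assumptionofalmostsurealmoststationarity}), which makes all relevant cumulants $\Theta(N)$ (and $\Theta(M)$) and their fluctuations $\mathcal O_p(\sqrt N)$, hence negligible against the leading order; the remaining steps are routine adaptations of the general-linear-model proof of Theorem~\ref{th:consistencyingenlinmodels}.
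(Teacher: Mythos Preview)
Your proposal is essentially the paper's proof: combine the almost-sure leading-order stationarity of Lemma~\ref{le:almostsurealmoststationarity} with the assumed global concavity via a Taylor expansion of the gradient along each coordinate axis, then close with a union bound---your ``direction by direction'' fallback is exactly the paper's eq.~(\ref{eq:proof_consistencyforsufficientconvexity}). The detour through a full-Hessian lower bound via ineq.~(\ref{eq:eigenvaluebound}) is unnecessary and would not quite succeed under the weak hypothesis alone (that bound needs the determinant, hence off-diagonal information the hypothesis does not supply), but you correctly identify the coordinate-wise route as what makes the weak hypothesis work, and that is precisely what the paper does.
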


\begin{proof}[Proof of Theorem~\ref{th:genconsistencywithsufficientconvexity}]
Theorem~\ref{th:genconsistencywithsufficientconvexity} is derived as follows:
\begin{equation}
\begin{aligned}
    {\rm Prob}[|\hat{\boldsymbol\beta}-\boldsymbol\beta_0|\leq\epsilon]\geq{\rm Prob}\left[
    \bigcap_i \bigcap_{\pm=\pm1}
    \left(\pm\partial_{\beta_i}\ln P ({\bf d};\boldsymbol\beta_0\pm \epsilon_\beta\hat x_{\beta_i})\leq 0\right)
    \right] 
    \\
    \geq 
    1-
    \sum_{i} \sum_{\pm=\pm1}{\rm Prob}\left[\pm\partial_{\beta_i}\ln P ({\bf d};\boldsymbol\beta_0\pm \epsilon_\beta\hat x_{\beta_i})> 0
    \right]
    \\
    \geq 
    1-
    \sum_{i} \sum_{\pm=\pm1}{\rm Prob}\left[\pm\frac 1 N \partial_{\beta_i}\ln P ({\bf d};\boldsymbol\beta_0)> -\frac 1 N \int_0^{\pm\epsilon_\beta} dl \partial^2_{\beta_i}\ln P ({\bf d}; \boldsymbol\beta_0+\hat x_{\beta_i}l)
    \right]
    \\\to 1 \hspace{4pt}(N,P,M\to\infty).
    \end{aligned}
    \label{eq:proof_consistencyforsufficientconvexity}
\end{equation}
The uniqueness of the stationary point due to the assumed global concavity of $\ln P({\bf d};\boldsymbol\beta)$ is used in the first transform, and eqs.~(\ref{eq:generalalmostsurealmoststationarity}) and (\ref{eq:assumptionofsufficientglobalconvexity}) are used in the last transform. 
\end{proof}

Although not applicable to the general linear model analyzed in the previous section, where we saw $\ln P({\bf d};\boldsymbol\beta)$ is not concave along $\beta=\tilde \beta_{\rm d}$, Theorem~\ref{th:genconsistencywithsufficientconvexity} generally covers the hyperparameters of the Gibbsian model-parameter likelihood, as shown later in \S\ref{sec:gen2}. 

The assumption of Lemma~\ref{le:almostsurealmoststationarity} is associated with the proportionality of the mean and variance of the cost functions $U_i$ and $V_i$ to the dimensions of the data and model parameter spaces. 
The macroscopic variables in the statistical thermodynamics~\citep{landau1994statistical} are phrased to be extensive 
when in their leading orders proportional to the 
sample-space dimension of the stochastically treated particle states; the following relation describes the leading-order proportionality of 
the extensive variables $Y(L)$ to the dimension $\dim (x)=L$ of a random variable $x$:
\begin{equation}
    \lim_{L\to\infty} \left[\frac{Y(L)}L-\frac{Y(cL)}{cL}\right]=0 \hspace{10pt}({\rm extensive\,\, property})
    \label{eq:extensivecond}
\end{equation}
Equation~(\ref{eq:extensivecond}) entails the existence of an asymptotic $N$-independent function $y$ such that $Y(L)=Ly+o(L)$: 
\begin{equation}
    \lim_{L\to\infty} \left[\frac{Y(L)}L-y\right]=0.
\end{equation}
The cumulants of the cost functions are often extensive in a wide range of the Gibbs ensembles. Examples include the Gaussian ones (eqs.~\ref{eq:twohyperparameterform}), where the distribution of $2U$ in $P({\bf d}|{\bf a};\boldsymbol\beta_{\rm d})$ follows the $\chi^2$-distribution with $N$ degrees of freedom, 
and
the effective cost function 
$U+\lambda V-\min_{{\bf a}|{\bf d}}[U+\lambda V]$, subtracting the minimum value of the cost function $U+\lambda V$, also follows the $\chi^2$-distribution with $M$ degrees of freedom in the model-parameter posterior $P({\bf a}|{\bf d};\boldsymbol\beta)$ after multiplied by two, similarly to $2V$ in the model-parameter prior $P({\bf a};\boldsymbol\beta_{\rm a})$; here the minimum value subtraction is associated with the arbitrariness of the ground-state value in the cost functions.

\subsection{Consistency with model-selection efficiency
under the effective equivalence between the empirical and generalization cross-entropy losses}
\label{sec:gen_efficiency}

The proof of consistency in the previous section is besides based on the extensive properties of the cross entropy and Fisher information of the marginal likelihood of the hyperparameters, which are actually true for the two-hyperparameter general linear models (eqs.~\ref{eq:twohyperparameterform}). 

The consistency of the MMLE may be obvious when 
the extensive property of the empirical cross entropy is satisfied in a deterministic manner; if
the empirical cross entropy of $P({\bf d};\boldsymbol\beta)$ relative to $P({\bf d};\boldsymbol\beta_0)$ for one data sample ${\bf d}$ satisfies the leading-order convergence in probability, 
\begin{equation}
    \lim_{N,M\to\infty}{\rm Prob}\left[\left|
    -\frac 1 N\ln P({\bf d};\boldsymbol\beta)-
    \left\langle -\frac 1 N\ln P({\bf d};\boldsymbol\beta)\right\rangle
    \right|>\epsilon
    \right]=0,
    \label{eq:extensiveempiricalcrossentropy}
\end{equation}
then the MMLE is consistent, that is 
\begin{equation}
    \lim_{N,M\to\infty}{\rm Prob}\left[\left|
    {\boldsymbol\beta}-\boldsymbol\beta_0
    \right|>\epsilon
    \right]=0,
\end{equation}
as far as the global minimum is well-defined in $\lim_{N\to\infty}N^{-1}\langle -\ln P({\bf d};\boldsymbol\beta)\rangle_{\boldsymbol\beta_0}$. 
Equation~(\ref{eq:extensiveempiricalcrossentropy}) means there is an asymptotic $N$-independent function $h(\boldsymbol\beta)$ such that
\begin{equation}
    h(\boldsymbol\beta)=\lim_{N,M\to\infty}\left\langle -\frac 1 N\ln P({\bf d};\boldsymbol\beta)\right\rangle_{\boldsymbol\beta_0}.
    \label{eq:largedeviationcrossentropy}
\end{equation}
Equation~(\ref{eq:extensiveempiricalcrossentropy}) with $h(\boldsymbol\beta)$ 
provides a bound of the $\ln P({\bf d};\boldsymbol\beta_0)$ difference; for $|\boldsymbol\beta-\boldsymbol\beta_0|>\epsilon$, 
\begin{equation}
\begin{aligned}
    \frac 1 N [\ln P({\bf d};\boldsymbol\beta_0)-\ln P({\bf d};\boldsymbol\beta)]
    >&\frac 1 N \min_{|\boldsymbol\beta-\boldsymbol\beta_0|>\epsilon}[\ln P({\bf d};\boldsymbol\beta_0)-\ln P({\bf d};\boldsymbol\beta)]
    \\
    \xrightarrow{p}& \min_{|\boldsymbol\beta-\boldsymbol\beta_0|>\epsilon}[h(\boldsymbol\beta_0)-h(\boldsymbol\beta)] \hspace{4pt}(N\to\infty),
    \end{aligned}
    \label{eq:schematicextensivityofcrossentropydiff}
\end{equation}
where the convergence in probability under $Q({\bf d})$ is represented by $p$ over an arrow. 
Equation~(\ref{eq:extensiveempiricalcrossentropy}) assuming the well-defined global minimum of $h(\boldsymbol\beta)$ also means the positivity of the observed Fisher information in its leading order; 
\begin{equation}
    \pm\frac 1 N \partial_{\beta_i}\ln P({\bf d};\boldsymbol\beta_0\pm\epsilon_\beta\hat x_{\beta_i})\xrightarrow{p}
    \epsilon_\beta \partial_{\beta_i}^2h(\boldsymbol\beta_0)+\mathcal O(\epsilon_\beta^2)<0 \hspace{4pt}(N,M\to\infty),
    \label{eq:schematicconvergenceofFisherinfo}
\end{equation}
where the last inequality is evaluated with sufficiently small $\epsilon_\beta$. 
Here we may take constant $M/N$, considering
$-\partial_{\beta_i}^2h(\boldsymbol\beta_0)$ in Gibbsian models represents the interclass variance of the cost function divided by $N$, which can be finite for both $U_i$ and $V_i$ when $M/N$ is constant. 
Equations~(\ref{eq:schematicextensivityofcrossentropydiff}) and (\ref{eq:schematicconvergenceofFisherinfo}) assure the consistency of the MMLE as in other cases shown earlier (cf. eqs.~\ref{eq:maximalityweakform}, \ref{eq:consistencyproofbody}, and \ref{eq:proof_consistencyforsufficientconvexity}); using $\epsilon_{\beta}<\min[\epsilon/\sqrt{{\rm dim}(\boldsymbol\beta)},\min_i(\beta_{0i})]$, 
\begin{equation}
\begin{aligned}
    {\rm Prob}[|\hat{\boldsymbol\beta}-\boldsymbol\beta_0|\leq\epsilon]\geq{\rm Prob}\left[
    \bigcap_i \bigcap_{\pm=\pm1}
    \left(\pm\partial_{\beta_i}\ln P ({\bf d};\boldsymbol\beta_0\pm \epsilon_\beta\hat x_{\beta_i})\leq 0\right)
    \right.\\\left.
    \cap
    \max_{n;|\boldsymbol\beta_{*n}-\boldsymbol\beta_0|>\epsilon}\left(\ln P({\bf d};\boldsymbol\beta_0)
    -
    \ln P({\bf d};\boldsymbol\beta_{* n})>0\right)
    \right] 
    \\
    \geq 
    1-
    \sum_{i} \sum_{\pm=\pm1}{\rm Prob}\left[\pm\frac 1 N \partial_{\beta_i}\ln P ({\bf d};\boldsymbol\beta_0)> -\frac 1 N \int_0^{\pm\epsilon_\beta} dl \partial^2_{\beta_i}\ln P ({\bf d}; \boldsymbol\beta_0+\hat x_{\beta_i}l)
    \right]
    \\
    -\max_{n;|\boldsymbol\beta_{*n}-\boldsymbol\beta_0|>\epsilon}
    {\rm Prob}\left[\frac 1 N \langle\ln P({\bf d};\boldsymbol\beta_0)
    -
    \ln P({\bf d};\boldsymbol\beta_{*n})\rangle
    \right.
    \\
    \left.
    -\frac 1 N [|\delta\ln P({\bf d};\boldsymbol\beta_0)|+|\delta\ln P({\bf d};\boldsymbol\beta_{*n})|] \leq 0\right]
    \\\to 1 \hspace{4pt}(N,M\to\infty).
    \end{aligned}
    \label{eq:consistencywithextensivecrossentropy}
\end{equation}
Since the mode of the cross entropy is the true hyperparameter set, 
the determinacy of the empirical cross entropy  (eq.~\ref{eq:extensiveempiricalcrossentropy}) almost implies the consistency of $\hat {\boldsymbol\beta}$. 
The analysis in \S\ref{sec:proofwithoutassuminguniquestationarity} validates eq.~(\ref{eq:extensiveempiricalcrossentropy}) with well-behaved $\lim_{N\to\infty}N^{-1}\langle -\ln P({\bf d};\boldsymbol\beta)\rangle_{\boldsymbol\beta_0}$ in a general linear model. 

The leading-order convergence in probability of the empirical cross entropy (eq.~\ref{eq:extensiveempiricalcrossentropy}) also 
leads to the Kullback-Leibler minimization nature of the MMLE even if the true data distribution $Q({\bf d})$ is outside the model family of $P({\bf d};\boldsymbol\beta)$. 
A model selection algorithm is occasionally phrased to be efficient when minimizes a given risk function~\citep{shibata1981optimal} asymptotically with probability 1, especially for the Kullback-Leibler divergence risk of $P({\bf d};\boldsymbol\beta)$ from $Q({\bf d})$~\citep{watanabe2018mathematical}:
\begin{equation}
    \lim_{N\to\infty} {\rm Prob}[|\hat{\boldsymbol\beta}-\boldsymbol\beta_{\rm 0}^\prime|>\epsilon]=0\hspace{20pt}({\rm model\mbox{-}selection \,\,efficiency}),
    \label{eq:defofmodelselectionefficiency}
\end{equation}
where
\begin{equation}
\boldsymbol\beta_0^\prime:={\rm argmin}_{\boldsymbol\beta}\mathcal D[P({\bf d}|\boldsymbol\beta)||Q({\bf d})].
\label{eq:defofeffectivetruehyperparameter}
\end{equation}
We may specifically call the condition of eq.~(\ref{eq:defofmodelselectionefficiency}) the model-selection efficiency, 
to distinguish the ordinary use of the term {\it efficiency} that refers to the consistent estimator of the minimum variance~\citep{gelman2013bayesian}. 
The concept of the model-selection efficiency is a generalization of the consistency extended to the true data distribution not included in the fitting model family [$Q({\bf d})\not\in \{P({\bf d}|\boldsymbol\beta)\}_{\boldsymbol\beta}$], where
$\boldsymbol\beta_0^\prime$ (eq.~\ref{eq:defofeffectivetruehyperparameter}) is contrasted with the true hyperparameters $\boldsymbol\beta_0$ such that $D[P({\bf d}|\boldsymbol\beta_0)||Q({\bf d})]=0$. 
The model-selection efficiency means the estimate $\hat {\boldsymbol\beta}$ provides the prior predictive distribution closest to the true form and is preferred for data predictability. 
The associated lemma is analogous to that of eq.~(\ref{eq:consistencywithextensivecrossentropy});
when
\begin{equation}
    \lim_{N,M\to\infty}{\rm Prob}\left[\left|
    -\frac 1 N\ln P({\bf d};\boldsymbol\beta)-
    \left\langle -\frac 1 N\ln P({\bf d};\boldsymbol\beta)\right\rangle
    \right|>\epsilon
    \right]=0,
\end{equation}
and as far as $h^\prime(\boldsymbol\beta)=\lim_{N,M\to\infty}\left\langle -\frac 1 N\ln P({\bf d};\boldsymbol\beta)\right\rangle$ has the single global minimum [$h^\prime(\boldsymbol\beta)<h^\prime(\boldsymbol\beta_0^\prime)$ for $\boldsymbol\beta\neq\boldsymbol\beta_0^\prime$], 
the MMLE is a model-selection efficient estimator of the hyperparameter set: 
\begin{equation}
    \lim_{N,M\to\infty}{\rm Prob}\left[\left|
    {\boldsymbol\beta}-\boldsymbol\beta_0^\prime
    \right|>\epsilon
    \right]=0.
\end{equation}
We obtain the above in the same manner as eq.~(\ref{eq:consistencywithextensivecrossentropy}) after replacing $\boldsymbol\beta_0$ with 
$\boldsymbol\beta_0^\prime$ and $P({\bf d};\boldsymbol\beta_0)$ with $Q({\bf d})$. 
In summary, when the empirical and generalization cross-entropy losses are effectively equivalent, as in the two-(or one-)hyperparameter general linear models (eqs.~\ref{eq:twohyperparameterform}), 
the MMLE reconciles the consistency with the efficiency in the model selection.

\begin{lemma}\label{le:genconsistencyandefficiencywithequivalenceincrossentropies}
When the empirical cross entropy of $P({\bf d};\boldsymbol\beta)$ relative to $Q({\bf d})$ satisfies the leading-order convergence in probability (eq.~\ref{eq:extensiveempiricalcrossentropy}), 
the MMLE is a consistent and model-selection efficient estimator of the hyperparameter set $\boldsymbol\beta$,
as far as the global minimum is well-defined in the leading order of the generalization cross-entropy loss $\lim_{N\to\infty}N^{-1}\langle -\ln P({\bf d};\boldsymbol\beta)\rangle$. 
\end{lemma}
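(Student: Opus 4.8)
The plan is to run, almost verbatim, the argument already assembled in eqs.~(\ref{eq:extensiveempiricalcrossentropy})--(\ref{eq:consistencywithextensivecrossentropy}), after observing that consistency is nothing but model-selection efficiency specialised to $Q({\bf d})=P({\bf d};\boldsymbol\beta_0)$; so it suffices to prove the efficiency statement $\lim_{N,M\to\infty}{\rm Prob}[|\hat{\boldsymbol\beta}-\boldsymbol\beta_0^\prime|>\epsilon]=0$ in full generality. First I would fix notation, writing $\langle\cdot\rangle=\int d{\bf d}\,\cdot\,Q({\bf d})$ and $h^\prime(\boldsymbol\beta)=\lim_{N,M\to\infty}N^{-1}\langle-\ln P({\bf d};\boldsymbol\beta)\rangle$; this limit exists and is a genuine ($N$-independent) function by the hypothesised leading-order convergence in probability of the empirical cross entropy (eq.~\ref{eq:extensiveempiricalcrossentropy}) together with the assumed well-definedness of its global minimum. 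Since $N^{-1}\langle-\ln P({\bf d};\boldsymbol\beta)\rangle$ and $N^{-1}\mathcal D[P({\bf d}|\boldsymbol\beta)||Q({\bf d})]$ differ only by the $\boldsymbol\beta$-free term $N^{-1}\langle\ln Q({\bf d})\rangle$, the effective true hyperparameter $\boldsymbol\beta_0^\prime$ of eq.~(\ref{eq:defofeffectivetruehyperparameter}) is precisely the global minimiser of $h^\prime$, and by hypothesis a \emph{separated} one: $h^\prime(\boldsymbol\beta)>h^\prime(\boldsymbol\beta_0^\prime)$ for $\boldsymbol\beta\neq\boldsymbol\beta_0^\prime$, with $\boldsymbol\beta_0^\prime$ in the interior of $(0,\infty)^{{\rm dim}(\boldsymbol\beta)}$.

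The core then splits into a global and a local half, both powered solely by eq.~(\ref{eq:extensiveempiricalcrossentropy}). For the global half, I would apply that convergence at $\boldsymbol\beta$ and at $\boldsymbol\beta_0^\prime$ to obtain, for each fixed $\boldsymbol\beta$ with $|\boldsymbol\beta-\boldsymbol\beta_0^\prime|>\epsilon$, that $N^{-1}[\ln P({\bf d};\boldsymbol\beta_0^\prime)-\ln P({\bf d};\boldsymbol\beta)]\xrightarrow{p}h^\prime(\boldsymbol\beta)-h^\prime(\boldsymbol\beta_0^\prime)$, whose infimum over the closed set $|\boldsymbol\beta-\boldsymbol\beta_0^\prime|\geq\epsilon$ is a $\Theta(1)$ strictly positive gap (the analogue of eq.~\ref{eq:schematicextensivityofcrossentropydiff}). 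For the local half, expanding around $\boldsymbol\beta_0^\prime$ as in eq.~(\ref{eq:schematicconvergenceofFisherinfo}) and using the convergence in probability of $N^{-1}\partial_{\beta_i}\ln P({\bf d};\boldsymbol\beta)$ at the $2\,{\rm dim}(\boldsymbol\beta)$ points $\boldsymbol\beta_0^\prime\pm\epsilon_\beta\hat x_{\beta_i}$, I would pin $\pm N^{-1}\partial_{\beta_i}\ln P({\bf d};\boldsymbol\beta_0^\prime\pm\epsilon_\beta\hat x_{\beta_i})$ to a strictly negative leading value of order $\epsilon_\beta$, using the local strict convexity of $h^\prime$ at its separated minimum (exactly as in eq.~\ref{eq:schematicconvergenceofFisherinfo}); a union bound over the $2\,{\rm dim}(\boldsymbol\beta)$ coordinate half-directions then yields, with probability tending to one, an interior maximiser of $\ln P({\bf d};\boldsymbol\beta)$ over the $\epsilon_\beta$-box around $\boldsymbol\beta_0^\prime$, at which necessarily $\ln P\geq\ln P({\bf d};\boldsymbol\beta_0^\prime)$. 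Assembling the two halves exactly as in eq.~(\ref{eq:consistencywithextensivecrossentropy}) — the global gap forces this interior maximiser to dominate $\ln P({\bf d};\cdot)$ at every point outside the $\epsilon$-ball, hence to be the MMLE $\hat{\boldsymbol\beta}$ — and choosing $\epsilon_\beta<\min[\epsilon/\sqrt{{\rm dim}(\boldsymbol\beta)},\min_i\beta_{0i}^\prime]$, I would conclude the efficiency statement; specialising to $Q({\bf d})=P({\bf d};\boldsymbol\beta_0)$, where $\boldsymbol\beta_0^\prime=\boldsymbol\beta_0$, then delivers the consistency statement, and both claims of the lemma follow at once.

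The hard part will be the passage from the \emph{pointwise-in-}$\boldsymbol\beta$ convergence in probability supplied by eq.~(\ref{eq:extensiveempiricalcrossentropy}) to honest control of the \emph{global} maximiser $\hat{\boldsymbol\beta}$. As in \S\ref{sec:proofwithoutassuminguniquestationarity}, I would close this by (i) covering the relevant compact hyperparameter region with a finite grid whose spacing lies below the (below-bounded) convergence radii of the Taylor series of $N^{-1}\ln P({\bf d};\boldsymbol\beta)$ and of its first two $\boldsymbol\beta$-derivatives, so that simultaneous convergence at the finitely many nodes — governed by a single union bound over the nodes — upgrades to the functional convergence actually used above; and (ii) reading the hypothesis ``the global minimum is well-defined in $\lim_{N\to\infty}N^{-1}\langle-\ln P({\bf d};\boldsymbol\beta)\rangle$'' as coercivity of $h^\prime$, i.e.\ precompactness of its sublevel sets, which forbids $\hat{\boldsymbol\beta}$ escaping to the boundary of $(0,\infty)^{{\rm dim}(\boldsymbol\beta)}$ or to infinity — the very obstruction encountered at $\lambda\to\infty$ in the linear model of \S\ref{eq:proof4gaussmodel}, where it forced the extra finite-$\lambda$ assumption. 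Spurious stationary points of $\ln P({\bf d};\boldsymbol\beta)$ need no separate handling: the global gap already prevents any of them from exceeding the interior maximiser near $\boldsymbol\beta_0^\prime$, so the lemma requires no uniqueness-of-stationary-point claim.
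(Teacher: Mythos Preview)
Your proposal is correct and follows essentially the same route as the paper: the argument is exactly that of eqs.~(\ref{eq:schematicextensivityofcrossentropydiff})--(\ref{eq:consistencywithextensivecrossentropy}), with the paper then remarking that efficiency follows ``in the same manner as eq.~(\ref{eq:consistencywithextensivecrossentropy}) after replacing $\boldsymbol\beta_0$ with $\boldsymbol\beta_0^\prime$ and $P({\bf d};\boldsymbol\beta_0)$ with $Q({\bf d})$''. Your only reorganisation is to prove efficiency first and obtain consistency by specialisation, whereas the paper does the reverse; your explicit treatment of the pointwise-to-functional upgrade and of coercivity mirrors (and slightly sharpens) the informal discussion the paper gives in \S\ref{sec:proofwithoutassuminguniquestationarity}.
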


The joint posterior of the model parameters and hyperparameters
$P({\bf a},{\boldsymbol\beta}|{\bf d})$ with a uniform hyperprior 
formally coincides with the product of $P({\bf a}|{\bf d};{\boldsymbol\beta})$ and $P({\bf d};\boldsymbol\beta)$ we have dealt with. Therefore, the consistency of $\hat{\boldsymbol\beta}$ means the following in such fully Bayesian analysis:
\begin{equation}
    P({\bf a},\hat{\boldsymbol\beta}|{\bf d})=P({\bf a}|{\bf d};{\boldsymbol\beta})P({\bf d};\boldsymbol\beta)\to
    P({\bf a}|{\bf d};\hat{\boldsymbol\beta})
    \delta({\boldsymbol\beta}-\hat{\boldsymbol\beta}) \,\,(N,M\to\infty)
    \label{eq:empiricalBaeys}
\end{equation}
The above asymptotic form eq.~(\ref{eq:empiricalBaeys}) is a substitution operation regarded as an approximation in the (parametric) empirical Bayes method~\citep{maritz2018empirical}. The consistency and model-selection efficiency of the MMLE $\hat{\boldsymbol\beta}$ in a large-degree-of-freedom model also mean that eq.~(\ref{eq:empiricalBaeys}) is an asymptotic analysis for a large number of data and model parameters.

\subsection{General consistency of the MMLE in estimating model-parameter-likelihood hyperparameters conjugated with extensive cost functions}
\label{sec:gen2}

Inference problems involve the two sample spaces of the model parameters and data, where the extensive variables are proportional to either of the two large parameters $N=\dim({\bf d})$ and $M=\dim({\bf a})$, as seen in the analysis of the general linear models. 
The cost function $V$ of the prior $P({\bf a};\boldsymbol\beta_{\rm a})$ is $\mathcal O(M)$ both in the prior $P({\bf a};\boldsymbol\beta_{\rm a})$ and posterior $P({\bf a}|{\bf d};\boldsymbol\beta)$, 
while the effective degree of the freedom in the cost function $U$ of the model-parameter likelihood $P({\bf d}|{\bf a};\boldsymbol\beta_{\rm d})$ (or more symmetrically, $\min_{\bf a}U$ and $\min_{\bf d}U$) is $N$ for $P({\bf d}|{\bf a};\boldsymbol\beta_{\rm d})$ and $M$ for $P({\bf a}|{\bf d};\boldsymbol\beta)$. 
This dimensional change of the sample space allows us to develop another proof of consistency in estimating the hyperparameters $\boldsymbol\beta_{\rm d}$ of the model-parameter likelihood. 
The following lemma assures the concavity of the marginal likelihood of the hyperparameters within the subspace spanned by the hyperparameters of the model-parameter likelihood. 
\begin{lemma}\label{le:convexityofthemarginalposteriorwrtmodelparameterlikelihoodhyperparameters}
For any cost function sets ${\bf U}$ of the model-parameter likelihood such that 
\begin{equation}
\begin{aligned}
    {\rm Var}[U_i;\boldsymbol\beta]&=\Theta(N)
    \\
    {\rm Var}[U_i|{\bf d};\boldsymbol\beta]&=\Theta(M)
\end{aligned}
\end{equation}
there is a critical number ratio $c_{*M/N}$ of the model parameters to the data below which 
all the diagonal components of 
$\partial_{\boldsymbol\beta}^2\ln P({\bf d};\boldsymbol\beta)$ are negative: 
\begin{equation}
    \partial_{\beta_{{\rm d}i}}^2\ln P({\bf d};\boldsymbol\beta)
    ={\rm Var}[U_i|{\bf d};\boldsymbol\beta]-{\rm Var}[U_i;\boldsymbol\beta]<0 \hspace{4pt}(M/N<c_{*M/N}).
\end{equation}
Here $c_{*M/N}$ is given by the following form:
\begin{equation}
    c_{*M/N}=\min_{i,\boldsymbol\beta}\frac{N^{-1}{\rm Var}[U_i;\boldsymbol\beta]}{M^{-1}{\rm Var}[U_i|{\bf d};\boldsymbol\beta]}.
    \label{eq:numberratiocutoff}
\end{equation}
\end{lemma}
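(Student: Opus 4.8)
The plan is to read the diagonal second derivative straight off the cumulant-difference identity eq.~(\ref{eq:secondderivlnP}). For the hyperparameter $\beta_{{\rm d}i}$ conjugated with a model-parameter-likelihood cost function $U_i$, the $(i,i)$ entry of $-\partial_{\boldsymbol\beta}^2\ln P({\bf d};\boldsymbol\beta)$ involves only $\mathcal E_i=U_i$, since the cross cumulants of $U_i$ with any $V_j$ vanish; hence eq.~(\ref{eq:secondderivlnP}) collapses to $\partial_{\beta_{{\rm d}i}}^2\ln P({\bf d};\boldsymbol\beta)={\rm Var}[U_i|{\bf d};\boldsymbol\beta]-{\rm Var}[U_i;\boldsymbol\beta]$, which is exactly the identity asserted. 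The whole problem thus reduces to showing that the posterior variance of $U_i$ is strictly below its joint-distribution variance once $M/N$ is small enough.

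Next I would insert the two extensivity hypotheses and factor out the number ratio, writing the variance quotient as ${\rm Var}[U_i|{\bf d};\boldsymbol\beta]/{\rm Var}[U_i;\boldsymbol\beta]=(M/N)\cdot(M^{-1}{\rm Var}[U_i|{\bf d};\boldsymbol\beta])/(N^{-1}{\rm Var}[U_i;\boldsymbol\beta])$, where both rescaled variances in the second factor are $\Theta(1)$, i.e.\ bounded away from $0$ and $\infty$. Defining $c_{*M/N}$ by eq.~(\ref{eq:numberratiocutoff}) as the infimum over $i$ and over the hyperparameter domain of the reciprocal ratio $(N^{-1}{\rm Var}[U_i;\boldsymbol\beta])/(M^{-1}{\rm Var}[U_i|{\bf d};\boldsymbol\beta])$, the quotient is at most $(M/N)/c_{*M/N}$, which is $<1$ precisely when $M/N<c_{*M/N}$; therefore ${\rm Var}[U_i|{\bf d};\boldsymbol\beta]<{\rm Var}[U_i;\boldsymbol\beta]$ for every $i$ and every $\boldsymbol\beta$, and $\partial_{\beta_{{\rm d}i}}^2\ln P({\bf d};\boldsymbol\beta)<0$, which is the claim. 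Equivalently, by the scalar form of the variance decomposition eq.~(\ref{eq:variancedecomposition}), the gap ${\rm Var}[U_i;\boldsymbol\beta]-{\rm Var}[U_i|{\bf d};\boldsymbol\beta]$ equals the interclass variance ${\rm Var}[\langle U_i\rangle_{\boldsymbol\beta|{\bf d}};\boldsymbol\beta]$ plus the (self-averaging, hence $o(M)$) fluctuation of the posterior variance about its $\boldsymbol\beta$-average, and the interclass variance is of order $\Theta(N)-\Theta(M)$, positive exactly when $M/N$ undercuts the $\Theta(1)$ ratio of leading coefficients.

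The main obstacle is not the algebra but the uniformity behind the $\Theta$-symbols: one must ensure that $N^{-1}{\rm Var}[U_i;\boldsymbol\beta]$ is bounded below and $M^{-1}{\rm Var}[U_i|{\bf d};\boldsymbol\beta]$ bounded above \emph{uniformly} over the relevant hyperparameter range, so that the infimum defining $c_{*M/N}$ is a strictly positive $\Theta(1)$ constant rather than collapsing to $0$ at the boundary of the $\boldsymbol\beta$-domain, and that the $\Theta(M)$ leading term of the posterior variance ${\rm Var}[U_i|{\bf d};\boldsymbol\beta]$ is asymptotically non-random, so the comparison is independent of the data realization; in the Gaussian model this holds exactly because the rescaled posterior cost is $\chi^2$-distributed independently of ${\bf d}$, and in general it follows from the same concentration estimates used for eqs.~(\ref{eq:fluctuationprobabilitybound_genlikelihood})--(\ref{eq:fluctuationprobabilitybound_genprior}). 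With the sign of the diagonal curvature pinned down, combining this lemma with Lemma~\ref{le:almostsurealmoststationarity} and the argument of Theorem~\ref{th:genconsistencywithsufficientconvexity} yields consistency of the MMLE for the $\boldsymbol\beta_{\rm d}$ components, but that lies beyond the statement proved here.
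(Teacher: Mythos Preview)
Your argument is correct and matches the paper's approach: the paper does not give a separate proof of this lemma, treating it as an immediate consequence of the cumulant-difference identity eq.~(\ref{eq:secondderivlnP}) together with the defining formula eq.~(\ref{eq:numberratiocutoff}), and your write-up simply spells out that one-line rearrangement. One small remark: the aside about vanishing cross cumulants of $U_i$ with $V_j$ is unnecessary here, since for the diagonal entry $\partial_{\beta_{{\rm d}i}}^2$ the identity eq.~(\ref{eq:secondderivlnP}) already reduces to ${\rm Var}[U_i|{\bf d};\boldsymbol\beta]-{\rm Var}[U_i;\boldsymbol\beta]$ with $i=j$ and $\mathcal E_i=U_i$, without any appeal to cross terms.
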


The unbiased estimate of population variance provides an example of $c_{*M/N}=1$. 
Although $U_i$ may be proportional to the subspace dimension $N_i$, $U_i$ is treated as $\Theta (N)$ in Lemma~\ref{le:convexityofthemarginalposteriorwrtmodelparameterlikelihoodhyperparameters} for brevity. 
As Lemma~\ref{le:convexityofthemarginalposteriorwrtmodelparameterlikelihoodhyperparameters} leads to eq.~(\ref{eq:assumptionofsufficientglobalconvexity}), 
when additionally $\langle U_i\rangle=\Theta (N)$ is met, 
Lemma~\ref{le:convexityofthemarginalposteriorwrtmodelparameterlikelihoodhyperparameters} is followed by
the consistency of $\hat{\boldsymbol\beta}_{\rm d}$ 
through Theorem~\ref{th:genconsistencywithsufficientconvexity} if $\boldsymbol\beta_{\rm a0}$ is known. 

\begin{theorem}[Consistency of the MMLE for the model-parameter-likelihood hyperparameters]\label{th:genconsistencyformodelparameterlikelihoodhyperparameters}
For $M/N<c_{*M/N}$ with known $\boldsymbol\beta_{\rm a0}$, 
the MMLE is a consistent estimator for the hyperparameters $\boldsymbol\beta_{\rm d}$ conjugated with the cost functions ${\bf U}$ with extensive model means and variances $\langle U_i\rangle=\Theta (N)$, ${\rm Var}[U_i;\boldsymbol\beta]=\Theta(N)$ and posterior variances ${\rm Var}[U_i|{\bf d};\boldsymbol\beta]=\Theta(M)$: 
\begin{equation}
    \lim_{N,M\to\infty}{\rm Prob}\left[\left|
    \hat{\boldsymbol\beta}_{\rm d}-\boldsymbol\beta_{\rm d0}
    \right|>\epsilon
    \right]=0.
\end{equation}
\end{theorem}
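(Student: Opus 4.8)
The plan is to deduce Theorem~\ref{th:genconsistencyformodelparameterlikelihoodhyperparameters} directly from Lemma~\ref{le:convexityofthemarginalposteriorwrtmodelparameterlikelihoodhyperparameters} and Theorem~\ref{th:genconsistencywithsufficientconvexity}, so that the real work is only to check that freezing the prior hyperparameters at $\boldsymbol\beta_{\rm a0}$ leaves a problem to which the latter applies. First I would note that, with $\boldsymbol\beta_{\rm a}=\boldsymbol\beta_{\rm a0}$ fixed, the only free hyperparameters are $\boldsymbol\beta_{\rm d}$, so the object of the MMLE is $P({\bf d};\boldsymbol\beta_{\rm d},\boldsymbol\beta_{\rm a0})$ as a function of $\boldsymbol\beta_{\rm d}$ alone and the relevant curvature is the $\boldsymbol\beta_{\rm d}$-block of $\partial_{\boldsymbol\beta}^2\ln P({\bf d};\boldsymbol\beta)$. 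Consequently the conditions on the prior cost functions $V_j$ that enter Lemma~\ref{le:almostsurealmoststationarity} (and hence Theorem~\ref{th:genconsistencywithsufficientconvexity}) are vacuous here, since no $\beta_{{\rm a}j}$-derivatives of $\ln P$ are taken, and it suffices to produce the two ingredients Theorem~\ref{th:genconsistencywithsufficientconvexity} needs for the $\boldsymbol\beta_{\rm d}$ directions: the extensive model means and variances of $U_i$, i.e.\ the $U$-part of eq.~(\ref{eq:assumptionofalmostsurealmoststationarity}), which is assumed outright ($\langle U_i\rangle=\Theta(N)$, ${\rm Var}[U_i;\boldsymbol\beta]=\Theta(N)$), and the sufficient-concavity condition eq.~(\ref{eq:assumptionofsufficientglobalconvexity}) restricted to the $\beta_{{\rm d}i}$ components.

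The second ingredient is exactly what Lemma~\ref{le:convexityofthemarginalposteriorwrtmodelparameterlikelihoodhyperparameters} supplies. By that lemma, for $M/N<c_{*M/N}$ each diagonal entry equals $\partial_{\beta_{{\rm d}i}}^2\ln P({\bf d};\boldsymbol\beta)={\rm Var}[U_i|{\bf d};\boldsymbol\beta]-{\rm Var}[U_i;\boldsymbol\beta]$, hence $\frac1N\partial_{\beta_{{\rm d}i}}^2\ln P({\bf d};\boldsymbol\beta)=\frac MN\cdot\frac1M{\rm Var}[U_i|{\bf d};\boldsymbol\beta]-\frac1N{\rm Var}[U_i;\boldsymbol\beta]$. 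Reading $c_{*M/N}$ from eq.~(\ref{eq:numberratiocutoff}) as an infimum over $i$ and $\boldsymbol\beta$ gives $\frac1N{\rm Var}[U_i;\boldsymbol\beta]\ge c_{*M/N}\,\frac1M{\rm Var}[U_i|{\bf d};\boldsymbol\beta]$, so the displayed quantity is at most $\big(\frac MN-c_{*M/N}\big)\frac1M{\rm Var}[U_i|{\bf d};\boldsymbol\beta]$; since $\frac MN-c_{*M/N}$ is a fixed negative constant and $\frac1M{\rm Var}[U_i|{\bf d};\boldsymbol\beta]=\Theta(1)$ is bounded below by a positive constant, $\frac1N\partial_{\beta_{{\rm d}i}}^2\ln P({\bf d};\boldsymbol\beta)$ lies below a strictly negative constant uniformly in $\boldsymbol\beta$, which is eq.~(\ref{eq:assumptionofsufficientglobalconvexity}). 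Theorem~\ref{th:genconsistencywithsufficientconvexity} then delivers $\lim_{N,M\to\infty}{\rm Prob}[|\hat{\boldsymbol\beta}_{\rm d}-\boldsymbol\beta_{\rm d0}|>\epsilon]=0$, which is the claim.

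I expect the main obstacle to be precisely the uniformity in that last estimate. The domain $(0,\infty)^{n_{\rm d}}$ is noncompact and ${\rm Var}[U_i|{\bf d};\boldsymbol\beta]$ is a posterior variance that depends on the realized ${\bf d}$, so one must argue that $\big(\frac MN-c_{*M/N}\big)\frac1M{\rm Var}[U_i|{\bf d};\boldsymbol\beta]$ stays below a fixed negative level for all $\boldsymbol\beta$ and for ${\bf d}$ outside a null set. This is exactly the content of interpreting $c_{*M/N}$ in eq.~(\ref{eq:numberratiocutoff}) as an infimum over $i$, $\boldsymbol\beta$ (and, strictly, over the ${\bf d}$-dependence of the posterior variance, which however concentrates about a ${\bf d}$-independent leading order, cf.\ eq.~(\ref{eq:posteriorcovariancereduction})), combined with the extensivity hypotheses that force $\frac1M{\rm Var}[U_i|{\bf d};\boldsymbol\beta]$ to be $\Theta(1)$ rather than merely positive. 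A subsidiary point: if $U_i$ is actually extensive in a proper sub-dimension $N_i$ instead of in $N$, one should let $N_i\to\infty$ with $N_i/N$ bounded below, so the $\Theta(N)$ bookkeeping inherited from Lemma~\ref{le:almostsurealmoststationarity} and Theorem~\ref{th:genconsistencywithsufficientconvexity} remains valid. Granting these provisos, the reduction to Theorem~\ref{th:genconsistencywithsufficientconvexity} is complete.
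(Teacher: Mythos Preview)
Your proposal is correct and follows essentially the same route as the paper: the paper's argument is the one-line remark preceding the theorem, namely that Lemma~\ref{le:convexityofthemarginalposteriorwrtmodelparameterlikelihoodhyperparameters} yields eq.~(\ref{eq:assumptionofsufficientglobalconvexity}) for the $\beta_{{\rm d}i}$ directions, and then with $\langle U_i\rangle=\Theta(N)$ and $\boldsymbol\beta_{\rm a0}$ known, Theorem~\ref{th:genconsistencywithsufficientconvexity} applies. Your write-up is in fact more explicit than the paper's on why the $c_{*M/N}$ bound converts the variance difference into a uniform $\Theta(1)$ negative curvature, and your caveats about uniformity over the noncompact domain and the ${\bf d}$-dependence of the posterior variance are issues the paper does not spell out either.
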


The derivation of Theorem~\ref{th:genconsistencyformodelparameterlikelihoodhyperparameters} can be a proof of consistency for the hyperparameter MMLE in Bayesian models with fixed model-parameter-prior hyperparameters, as well as for the marginal likelihood method in frequentist approaches without the model-parameter prior, below the critical number ratio $c_{*M/N}$.  
Theorem~\ref{th:genconsistencyformodelparameterlikelihoodhyperparameters} may be regarded as an extension of the proof of the MMLE for ``parametric'' limits: $M/N\to0$. 
When $M/N\to0$, the extremal form (eq.~\ref{eq:Ibarel}) of the selection condition of the MMLE is simplified to $\min_{{\bf a}|{\bf d}}U_i/N=\langle U_i\rangle_{\boldsymbol\beta}/N$, as long as the first and second cumulant of the cost functions are extensive near $\boldsymbol\beta=\boldsymbol\beta_0$. 
It makes the MMLE and more simply $\min_{{\bf a}|{\bf d}}U_i=\langle U_i\rangle_{\boldsymbol\beta}$ the consistent estimators, because 
$\langle U_i\rangle_{\boldsymbol\beta}$ is a monotonically decreasing function of $\beta_{{\rm d}i}$ and thus $\min_{{\bf a}|{\bf d}}U_i=\langle U_i\rangle_{\boldsymbol\beta}$ has a unique solution. 
The change in the effective degree of freedom from $N$ in the likelihood to $M$ in the posterior seems to provide an intrinsic difference of the likelihood cost function $U_i$ from the prior cost function $V_i$ with $M$ degrees of freedom invariant between the prior and posterior. 

\subsection{Consistency and model-selection efficiency of the MMLE under the asymptotic normality of the predictive distribution applicable to non-exponential families}
\label{sec:gen_efficiency_app}
Lemma~\ref{le:genconsistencyandefficiencywithequivalenceincrossentropies} of the consistency and model-selection efficiency in \S\ref{sec:gen_efficiency} do not postulate the specific properties of the Gibbs distributions, thus principally applicable to the other model families. 
Here we see the MMLE is widely consistent and  model-selection efficient under the asymptotic normality~\citep{fujikoshi2011multivariate} of the data generator. 
The analysis in this section is not limited to the exponential family, but the same notation continues for brevity.

The leading-order equivalence between the empirical and generalization cross-entropy losses (eq.~\ref{eq:extensiveempiricalcrossentropy}) can follow the asymptotic normality of the prior predictive distribution $P({\bf d};\boldsymbol\beta)$ with respect to ${\bf d}$, which yields
\begin{equation}
\begin{aligned}
-\ln P({\bf d};\boldsymbol\beta)  \to
\langle -\ln P({\bf d};\boldsymbol\beta) \rangle_{\boldsymbol\beta}
+\frac 1 {2} 
({\bf d}-\langle {\bf d}\rangle_{\boldsymbol\beta})^{\rm T}{\bf C}_{\boldsymbol\beta}^{-1} ({\bf d}-\langle {\bf d}\rangle_{\boldsymbol\beta}) -\frac N 2 +o(N)
\\
({\rm asymptotic\,normality\,of\,the\,predictive\,distribution}),
\end{aligned}
\label{eq:asymptoticnormality}
\end{equation}
where ${\bf C}_{\boldsymbol\beta}$ denotes the covariance matrix of $P({\bf d};\boldsymbol\beta)$ such that
$(C_{\boldsymbol\beta})_{ij}={\rm Cov}[d_i:d_j;\boldsymbol\beta]$, here positive definite. 
When the true hyperparameter set $\boldsymbol\beta_0$ exists,
eq.~(\ref{eq:asymptoticnormality}) yields
\begin{equation}
\begin{aligned}
    -\frac 1 N \ln P({\bf d};\boldsymbol\beta)  \xrightarrow{p} 
    &
    \frac 1 N 
    \langle -\ln P({\bf d};\boldsymbol\beta) \rangle_{\boldsymbol\beta}
    \\
    &
    +\frac 1 {2N} 
    {\rm Tr}\{{\bf C}_{\boldsymbol\beta}^{-1}[{\bf C}_{\boldsymbol\beta_0}+(\langle {\bf d}\rangle_{\boldsymbol\beta}-\langle {\bf d}\rangle_{\boldsymbol\beta_0})(\langle {\bf d}\rangle_{\boldsymbol\beta}-\langle {\bf d}\rangle_{\boldsymbol\beta_0})^{\rm T}]\} -\frac 1 2 
\end{aligned}
\label{eq:extensivecrossentropyforasymptoticnormality_conditional}
\end{equation}
for $N\to\infty$ 
from Chebyshev's inequality with the following cumulant evaluation of $E_f:=({\bf d}-\langle {\bf d}\rangle_{\boldsymbol\beta})^{\rm T}{\bf C}_{\boldsymbol\beta}^{-1} ({\bf d}-\langle {\bf d}\rangle_{\boldsymbol\beta})/2$:
\begin{equation}
\begin{aligned}
    &\langle E_f
 \rangle_{\boldsymbol\beta_0}
=
\frac 1 2 {\rm Tr}\{{\bf C}_{\boldsymbol\beta}^{-1}[{\bf C}_{\boldsymbol\beta_0}+(\langle {\bf d}\rangle_{\boldsymbol\beta}-\langle {\bf d}\rangle_{\boldsymbol\beta_0})(\langle {\bf d}\rangle_{\boldsymbol\beta}-\langle {\bf d}\rangle_{\boldsymbol\beta_0})^{\rm T}]\}
+o(N)
\\
&
{\rm Var}[E_f;\boldsymbol\beta_0]
=
\frac 1 2
{\rm Tr}\{[{\bf C}_{\boldsymbol\beta}^{-1}({\bf C}_{\boldsymbol\beta_0}+(\langle {\bf d}\rangle_{\boldsymbol\beta}-\langle {\bf d}\rangle_{\boldsymbol\beta_0})(\langle {\bf d}\rangle_{\boldsymbol\beta}-\langle {\bf d}\rangle_{\boldsymbol\beta_0})^{\rm T})]^2\}
+o(N)
\end{aligned}
\end{equation}
which are both $\Theta (N)$. 
Equation~(\ref{eq:extensivecrossentropyforasymptoticnormality_conditional}) shows the leading-order equivalence between the empirical and  generalization cross-entropy losses premised to realize the consistency of the MMLE in Lemma~\ref{le:genconsistencyandefficiencywithequivalenceincrossentropies} (\S\ref{sec:gen_efficiency}). 
Even not assuming the existence of $\boldsymbol\beta_0$, the asymptotic normality of $P({\bf d};\boldsymbol\beta)$ and $Q({\bf d})$ realizes eq.~(\ref{eq:extensiveempiricalcrossentropy}):
\begin{equation}
\begin{aligned}
    -\frac 1 N \ln P({\bf d};\boldsymbol\beta)  \xrightarrow{p}& 
\frac 1 N 
\langle -\ln P({\bf d};\boldsymbol\beta) \rangle_{\boldsymbol\beta}
+
\frac 1 {2N} 
    \langle({\bf d}-\langle {\bf d}\rangle)^{\rm T}{\bf C}_{\boldsymbol\beta}^{-1}( {\bf d}-\langle {\bf d}\rangle)\rangle
    \\&+\frac 1 {2N} 
    (\langle {\bf d}\rangle_{\boldsymbol\beta}-\langle {\bf d}\rangle)^{\rm T}{\bf C}_{\boldsymbol\beta}^{-1}
    (\langle {\bf d}\rangle_{\boldsymbol\beta}-\langle {\bf d}\rangle)
    -\frac 1 2,
    \end{aligned}
    \label{eq:extensivecrossentropyforasymptoticnormality}
\end{equation}
which is obtained similarly to eq.~(\ref{eq:extensivecrossentropyforasymptoticnormality_conditional}). 
Equation~(\ref{eq:extensivecrossentropyforasymptoticnormality}) upholds the prerequisite for the model-selection efficiency of the MMLE in \S\ref{sec:gen_efficiency}.

The leading-order equivalence between the empirical and generalization  cross-entropy losses as above follows the asymptotic normality of the prior and true predictive distributions. 
Therefore, 
Lemma~\ref{le:genconsistencyandefficiencywithequivalenceincrossentropies} (with the argument in \S\ref{sec:gen_efficiency}) applicable to non-exponential families concludes 
the MMLE is there consistent and efficient when
the global minimum is well-defined in the leading-order functions of the cross entropy, respectively $h(\boldsymbol\beta)$ and $h^\prime(\boldsymbol\beta)$. 
\begin{theorem}[Consistency and model-selection efficiency of the MMLE for asymptotically normal predictives]\label{th:genconsistencyundePAN}
When $P({\bf d};\boldsymbol\beta)$ and $Q({\bf d})$ satisfy the asymptotic normality, 
the MMLE $\hat {\boldsymbol\beta}$ is consistent and model-selection efficient insofar as 
their normalized generalization cross-entropy loss $h^\prime(\boldsymbol\beta)=\lim_{N\to\infty}N^{-1}\langle -\ln P({\bf d};\boldsymbol\beta)\rangle$ has the single global minimum. 
\end{theorem}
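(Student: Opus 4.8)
The plan is to reduce Theorem~\ref{th:genconsistencyundePAN} to Lemma~\ref{le:genconsistencyandefficiencywithequivalenceincrossentropies}, whose sole hypothesis is the leading-order convergence in probability of the empirical cross entropy (eq.~\ref{eq:extensiveempiricalcrossentropy}); everything in this theorem beyond that lemma is the verification that the asymptotic normality premise (eq.~\ref{eq:asymptoticnormality}) supplies exactly that convergence. First I would insert the asymptotic-normality expansion of $-\ln P({\bf d};\boldsymbol\beta)$ into $-N^{-1}\ln P({\bf d};\boldsymbol\beta)$, isolating the only genuinely random term, the Gaussian quadratic form $E_f:=({\bf d}-\langle {\bf d}\rangle_{\boldsymbol\beta})^{\rm T}{\bf C}_{\boldsymbol\beta}^{-1}({\bf d}-\langle {\bf d}\rangle_{\boldsymbol\beta})/2$. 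Averaging ${\bf d}$ under $Q({\bf d})$ — itself asymptotically normal, so that only its first two cumulants ${\bf C}_{\boldsymbol\beta_0}$ (or $\langle{\bf d}\rangle$, ${\bf C}$) enter the leading order — and applying the standard Wick/Isserlis moment identities for a quadratic form in a Gaussian vector gives $\langle E_f\rangle=\Theta(N)$ and ${\rm Var}[E_f]=\Theta(N)$, as already recorded below eqs.~(\ref{eq:extensivecrossentropyforasymptoticnormality_conditional}) and (\ref{eq:extensivecrossentropyforasymptoticnormality}). Chebyshev's inequality then yields $\bigl|-N^{-1}\ln P({\bf d};\boldsymbol\beta)-\langle -N^{-1}\ln P({\bf d};\boldsymbol\beta)\rangle\bigr|=\mathcal O_p(N^{-1/2})$, which is precisely eq.~(\ref{eq:extensiveempiricalcrossentropy}); running the same computation with $P({\bf d};\boldsymbol\beta_0)$ in place of $Q({\bf d})$ (eq.~\ref{eq:extensivecrossentropyforasymptoticnormality_conditional}) covers the case where a true $\boldsymbol\beta_0$ exists, and the general case (eq.~\ref{eq:extensivecrossentropyforasymptoticnormality}) covers model-selection efficiency.

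Next I would promote this pointwise-in-$\boldsymbol\beta$ statement to the functional form invoked inside Lemma~\ref{le:genconsistencyandefficiencywithequivalenceincrossentropies}. Since $\dim(\boldsymbol\beta)$ is finite, the probability of an $\epsilon$-deviation at any point of a finite grid is bounded by the sum of the individual deviation probabilities (the union-bound argument used in \S\ref{sec:proofwithoutassuminguniquestationarity}), so the convergence holds simultaneously on any finite collection of hyperparameter values and hence — refining the grid below the Taylor radius of $N^{-1}\langle -\ln P({\bf d};\boldsymbol\beta)\rangle$ — as a function on any fixed compact hyperparameter subdomain. Then, exactly along the lines of eq.~(\ref{eq:consistencywithextensivecrossentropy}) with $\boldsymbol\beta_0$ replaced by $\boldsymbol\beta_0^\prime$ (eq.~\ref{eq:defofeffectivetruehyperparameter}) and $P({\bf d};\boldsymbol\beta_0)$ replaced by $Q({\bf d})$, the leading term $h^\prime(\boldsymbol\beta)=\lim_{N\to\infty}N^{-1}\langle -\ln P({\bf d};\boldsymbol\beta)\rangle$ — which by the Kullback--Leibler inequality is minimized at $\boldsymbol\beta_0^\prime$, assumed here to be its unique global minimizer — dominates the $\mathcal O_p(N^{-1/2})$ fluctuation, so the argmax $\hat{\boldsymbol\beta}$ of $\ln P({\bf d};\boldsymbol\beta)$ is pinned to the $\epsilon$-neighborhood of $\boldsymbol\beta_0^\prime$; when a true $\boldsymbol\beta_0$ exists, $\boldsymbol\beta_0^\prime=\boldsymbol\beta_0$ and this is consistency.

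The main obstacle I anticipate is uniformity in $\boldsymbol\beta$: the $o(N)$ remainder in eq.~(\ref{eq:asymptoticnormality}) must be controlled uniformly on the relevant compact subdomain, the Taylor radius of $N^{-1}\langle -\ln P({\bf d};\boldsymbol\beta)\rangle$ must be bounded below there, and ${\bf C}_{\boldsymbol\beta}$ must stay uniformly positive definite so that $E_f$ is well defined — without these the grid-refinement step is only heuristic. Under the stated hypothesis that $h^\prime(\boldsymbol\beta)$ has a single, well-separated global minimum these are mild regularity conditions on the model family, but turning them into a rigorous uniform-convergence argument (rather than the finite-grid sketch) is where the real work lies; the remainder is routine Gaussian-quadratic-form moment bookkeeping together with the now-familiar Chebyshev-plus-union-bound argument and an appeal to Lemma~\ref{le:genconsistencyandefficiencywithequivalenceincrossentropies}.
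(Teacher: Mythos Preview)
Your proposal is correct and follows essentially the same route as the paper: expand $-\ln P({\bf d};\boldsymbol\beta)$ via the asymptotic-normality hypothesis (eq.~\ref{eq:asymptoticnormality}), use Gaussian moment identities to show the quadratic form $E_f$ has mean and variance both $\Theta(N)$ so that Chebyshev gives eq.~(\ref{eq:extensiveempiricalcrossentropy}) (this is exactly the content of eqs.~\ref{eq:extensivecrossentropyforasymptoticnormality_conditional}--\ref{eq:extensivecrossentropyforasymptoticnormality}), and then invoke Lemma~\ref{le:genconsistencyandefficiencywithequivalenceincrossentropies}. Your explicit flagging of the uniformity-in-$\boldsymbol\beta$ regularity conditions (uniform control of the $o(N)$ remainder, bounded-below Taylor radius, uniform positive definiteness of ${\bf C}_{\boldsymbol\beta}$) is more careful than the paper, which leaves these implicit.
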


The coverage of Theorem~\ref{th:genconsistencyundePAN} includes $P({\bf d};\boldsymbol\beta)$ and $Q({\bf d})$ belonging to
the two-(or one-)hyperparameter general linear models (eqs.~\ref{eq:twohyperparameterform}), where the cross entropy has a sufficiently convex unique stationary point (ineq.~\ref{eq:strongmodality}) as seen in \S\ref{eq:proof4gaussmodel}. 
Another example is the set of $P({\bf d};\boldsymbol\beta)$ and $Q({\bf d})$ that provides $\langle -\ln P({\bf d};\boldsymbol\beta)\rangle\propto N$, for which $h^\prime(\boldsymbol\beta)$ [comprehending $h(\boldsymbol\beta)$] becomes $\langle -\ln P({\bf d};\boldsymbol\beta)\rangle/N$ minimized at the minimum of $\langle -\ln P({\bf d};\boldsymbol\beta)\rangle$ (i.e. $\boldsymbol\beta_0$ and $\boldsymbol\beta_0^\prime$). 
$\langle -\ln P({\bf d};\boldsymbol\beta)\rangle\propto N$ holds in parametric models ($M=const.$) for independent data and also in nonparametric models ($M/N=const.$) with recursive structures often seen in the partial differential equation modeling. The following includes a one-dimensional case for the latter. 

\begin{example}
The cross entropy $\langle-\ln P({\bf d};\boldsymbol\beta) \rangle $ of $P({\bf d};\boldsymbol\beta)$ relative to $Q({\bf d})$ is proportional to $N$ and the global minimum (minima) of $h^\prime(\boldsymbol\beta)$ is (are) $\boldsymbol\beta_0^\prime$ 
for independent data such that $P({\bf d}|{\bf a};\boldsymbol\beta)=\prod_i P(d_i|{\bf a};\boldsymbol\beta)$ and $Q({\bf d})=\prod_i Q(d_i)$ in a model with fixed $M$. 
The same holds in a model of constant $M/N\in\mathbb N$ under the model translation symmetry: $P({\bf d},{\bf a})=P({\bf d},{\bf a})|_{{\bf d}=\boldsymbol{\mathcal C}{\bf d},{\bf a}=\boldsymbol{\mathcal C}^{M/N}{\bf a}}$, where $\boldsymbol{\mathcal C}$ denotes the circulant matrix.
\end{example}

The asymptotic normality of the posterior in the inverted and true models is the basis of the derivation of AIC and BIC~\citep{akaike1974new,schwarz1978estimating}. 
Besides AIC and BIC assume the data independent of each other, 
which corresponds to the above case of $\langle -\ln P({\bf d};\boldsymbol\beta)\rangle\propto N$ with finite $M$, yielding the consistency and model-selection efficiency of the MMLE in the hyperparameter inference under the asymptotic normality of the predictive distributions; we note the proofs in \S\ref{sec:gen_efficiency} that have focused on the $M\propto N$ cases can be repeated with constant $M$. 
The asymptotic normality, giving different criteria in the dimension decision of parametric models, 
provides a unified representation for the minimum empirical and generalization cross-entropy loss criteria in the hyperparameter inference, which is the MMLE. 

\section{Discussion and conclusions}

We have shown the consistency of the MMLE for the natural parameters in a range of the exponential families, which cover various applications such as the generalized linear model~\citep{dobson2018introduction}, or more specifically the Gibbs distributions, as well as more generally for the hyperparameter determination in the non-exponential families with asymptotically normal data predictive distributions. 
The consistency of the MMLE is applicable to $M/N>1$, where the maximum likelihood estimate is ill-posed. 
We detailed the proof in a general linear model and extended to various cases with highlighting the essence. 
An intrinsic postulate of the proof is found in the extensive properties of the cost functions that hold in various Gibbsian models. 
When the unknown hyperparameters are not included in the model-parameter prior, 
the extensive property of the cost functions in the model-parameter likelihood is a unique postulate of the consistency below the number-ratio ($M/N$) cutoff of the model parameters to the data. 
Another key relation is the leading-order equivalence between the empirical and generalization cross-entropy losses, 
and under the asymptotic normality of the predictive distribution applicable to the detailed two-hyperparameter general linear models, more widely the model-selection efficiency holds with applicability to the true data distribution outside the model space. 
Our proof of consistency and model-selection efficiency also verifies the multi-stage inference of the model parameters involving the hyperparameter point estimation of the MMLE, namely the empirical Bayes method, which has been experimentally successful yet often lacking theoretical foundations. 
We find the empirical Bayes method allows the (conditional) posterior of the model parameters combines the stability and asymptotic unbiasedness. Examples include the regularized least square estimate using the hyperparameters of the MMLE, or here equivalently, ABIC. 
Because the marginal likelihood of the hyperparameters is the empirical cross entropy for a single data set, 
these proofs for the consistency and model-selection efficiency of the MMLE ensure the same also for the method of minimum (empirical) cross entropy common in cross-validations~\citep{friedman2001elements}.

One robust result is the asymptotic almost sure leading-order stationarity (eqs.~\ref{eq:generalalmostsurealmoststationarity_costfunctionform} and \ref{eq:generalalmostsurealmoststationarity}) of the marginal likelihood of the hyperparameters at the true hyperparameter set. 
It immediately follows the extensive properties of the cost functions (eq.~\ref{eq:assumptionofalmostsurealmoststationarity}) as Lemma~\ref{le:almostsurealmoststationarity}, thus expected to be universal. 
Lemma~\ref{le:almostsurealmoststationarity} implies the true hyperparameter set may be close to at least either of the stationary points in the marginal likelihood of the hyperparameters when the extensive properties are satisfied by the mean and variances of the conjugated cost functions; at least, the MMLE is consistent for the globally sufficiently concave marginal likelihood of the hyperparameters  (Theorem~\ref{th:genconsistencywithsufficientconvexity}). 
Limitation of Lemma~\ref{le:almostsurealmoststationarity} may arise from the first-order phase transition that violates the extensive property of the cost-function variance. 

The extensive properties of the cost functions (eq.~\ref{eq:assumptionofalmostsurealmoststationarity}) premised in 
Lemma~\ref{le:almostsurealmoststationarity} 
also yield
\begin{equation}
\begin{aligned}
P({\bf U}/N,{\bf V}/M;\boldsymbol\beta)
&\to \delta({\bf U}/N-\langle {\bf U}/N\rangle_{\boldsymbol\beta})\delta({\bf V}/M-\langle {\bf V}/M\rangle_{\boldsymbol\beta}),
\\
P({\bf U}/N,{\bf V}/M|{\bf d};\boldsymbol\beta)
&\to \delta({\bf U}/N-\langle {\bf U}/N\rangle_{\boldsymbol\beta|{\bf d}})\delta({\bf V}/M-\langle {\bf V}/M\rangle_{\boldsymbol\beta|{\bf d}}).
\end{aligned}
\end{equation}
That is, the posterior $P({\bf a}|{\bf d};\boldsymbol\beta)$ giving reasonable extensive cost function values in view of the original statistical model $P({\bf a},{\bf d};\boldsymbol\beta)$ is limited to the stationary point $\boldsymbol\beta_*\ni\boldsymbol\beta_0,\boldsymbol\beta_0^\prime$ of the marginal likelihood of the hyperparameters:
\begin{equation}
    \mathcal D[P(\boldsymbol{\mathcal E};\boldsymbol\beta)||P(\boldsymbol{\mathcal E}|{\bf d};\boldsymbol\beta)]\to 
        \infty  \hspace{4pt}(N\to\infty;\boldsymbol\beta\not\in\boldsymbol\beta_*).
\end{equation}
Here we can reasonably request the observation invariance of the cost function mean (eq.~\ref{eq:Ibarel}), that is the extremal representation of the MMLE. 
Equation~(\ref{eq:Ibarel}) is actually true on average as the mean invariance under the probability decomposition (eq.~\ref{eq:avIbarel2}) 
and derives from the free entropy difference nature of the marginal likelihood of the hyperparameters (eq.~\ref{eq:logPPMasfreenetropydiff}). 
Several studies have pointed out the correspondence between the marginal likelihood of the hyperparameters (the model evidence) and free entropy~\citep[e.g.,][]{iba1989bayesian,watanabe2018mathematical}. Yet, the model evidence may require further investigation as the free entropy difference. 

The assumption of a unique stationary point is unnecessary in the present proof of consistency of the MMLE. Even the proof of the model-selection efficiency using the asymptotic normality of the predictive distribution with respect to data in \S\ref{sec:gen_efficiency} covers the marginal likelihood of the hyperparameters with multiple hyperparameter stationary points. 
Meanwhile, the analysis in \S\ref{eq:proof4gaussmodel} proves as Theorem~\ref{th:uniquestationarityingenlinearmodels} the 
asymptotic almost sure uniqueness of the stationary point in the marginal likelihood of the hyperparameters for the two-hyperparameter general linear models (eqs.~\ref{eq:twohyperparameterform}). 
We also saw the marginal likelihood of the hyperparameters is concave with respect to the hyperparameters of the model-parameter likelihood conjugated with extensive cost functions below a critical point of the number ratio $N/M$, 
even with the nonlinear multimodal model-parameter likelihood and prior without the asymptotic normality (Lemma~\ref{le:convexityofthemarginalposteriorwrtmodelparameterlikelihoodhyperparameters}; \S\ref{sec:gen2}). 
These suggest the marginalization may invest the likelihood with a well-behaved probability landscape. 
A fully Bayesian analysis~\citep{sato2022appropriate} of the two-hyperparameter general linear models (eqs.~\ref{eq:twohyperparameterform}) 
actually finds the well-behavedness of the marginal posterior of the hyperparameters contrasted with the pathology in the joint posterior of the model parameters and hyperparameters, which causes the well-posedness of the ABIC estimate despite the ill-posedness of the maximum a posteriori estimate. 
However, those preferable natures of the marginal likelihood would not apply to any parameter subsets, since this study and Ref.~\citep{sato2022appropriate} owe most of the results to the special properties of the natural parameters in the exponential family. 
Another exception may be within the empirical Bayes approach using an infinite number of hyperparameters [s.t. ${\rm dim}(\boldsymbol\beta)/N=const.\cup \infty$], as this study and Ref.~\citep{sato2022appropriate} also utilize the finiteness of the hyperparameter dimension.

As is known in the decision problem of the model-parameter dimension, the consistency in selecting the true model-parameters is often incompatible with the model-selection efficiency in approximating the true data generative distribution~\citep{watanabe2018mathematical,yang2005can}. 
AIC aims the minimum Kullback-Leibler divergence between the prior and true predictive distributions (i.e. the minimum generalization cross-entropy loss, $\langle -\ln P({\bf d};\boldsymbol\beta) \rangle$)~\citep{akaike1974new} and is model-selection efficient and minimax-rate optimal~\citep{barron1999risk} in the squared error loss minimization for the true data distribution outside the model space~\citep{shibata1981optimal,shibata1983asymptotic}. 
BIC follows the MMLE (minimum $-\ln P({\bf d};\boldsymbol\beta)$)~\citep{schwarz1978estimating} and is consistent within the scope of its approximation~\citep{nishii1984asymptotic}. 
Since AIC and BIC select different numbers of model parameters, the model-selection efficiency and consistency are not satisfied at once in the dimension decision of the parametric models. 
The closeness to the true distribution in the data prediction is there another direction of optimization from the knowledge acquisition on the true parameters~\citep{watanabe2018mathematical}. 
In contrast, ABIC derived from the minimum Kullback-Leibler divergence similarly to AIC is equivalent to the MMLE yielding BIC in the inference of a fixed number of hyperparameters. 
Therefore in these models, the data predictability and knowledge acquisition on the true parameters disable their difference in optimization functions, 
and as such, the predictability and knowledge acquisition are naturally likely to be compatible in well-posed hierarchical inferences. 
This study in fact shows even the cross entropy for one data set duplicates the generalization cross entropy in its leading order with probability 1, in a range of Gibbsian models or under the asymptotic normality of the data predictive distribution. 
It verifies the MMLE as a consistent besides model-selection efficient criterion, that is as an optimum both for the data predictability and knowledge acquisition, and thus would found the experimental successfulness of the empirical Bayes method.





\begin{funding}
This study was partly supported by JSPS KAKENHI Grant Number 21J01694. 
\end{funding}

\begin{supplement}
\stitle{Derivation of ineq.~(\ref{eq:strongmodality})}
\sdescription{The attached supplement presents the derivation of ineq.~(\ref{eq:strongmodality}).}
\end{supplement}


\bibliographystyle{imsart-number} 


\section*{Derivation of ineq.~(\ref{eq:strongmodality})}
\renewcommand{\theequation}{S.\arabic{equation}}
\setcounter{equation}{0}
Inequality~(\ref{eq:strongmodality}) is derived in the $\boldsymbol\beta^\prime$ space. 
\footnote{
The following derivation is intended to appear in a supplemental file: Ref.~\citep{sato2022supplement}. 
}
For $|\beta_{\rm d}-\tilde\beta(\lambda)|>\tilde\epsilon_{\beta_{\rm d}}$, 
\begin{equation}
\begin{aligned}
    \langle\ln L(\boldsymbol\beta)\rangle
    \leq& \max_{\pm=\pm1}\langle\ln L(\tilde\beta_{\rm d}(\lambda)\pm\tilde\epsilon_{\beta_{\rm d}},\lambda)\rangle
    \\=&
    \langle\ln L(\boldsymbol\beta_0)\rangle
    +\max_{\pm=\pm1}
    \int^{\tilde\epsilon_{\beta_{\rm d}}}_0 d \Delta\beta_{\rm d} \int^{\Delta\beta_{\rm d}}_0 d \Delta\beta_{\rm d}^\prime \left(\frac{\partial^2\langle\ln L(\tilde\beta_{\rm d}(\lambda)+\Delta\beta_{\rm d}^\prime,\lambda)\rangle}{\partial(\Delta\beta_{\rm d}^\prime)^2}\right)_\lambda
    \\\leq&
    \langle\ln L(\boldsymbol\beta_0)\rangle
    +\max_{|\beta_{\rm d}-\tilde\beta(\lambda)|\leq\tilde\epsilon_{\beta_{\rm d}}} \left(\frac{\partial^2\langle\ln L(\boldsymbol\beta)\rangle}{\partial\beta_{\rm d}^2}\right)_\lambda\frac{\tilde\epsilon_{\beta_{\rm d}}^2}{2}
    \\\leq&
    \langle\ln L(\boldsymbol\beta_0)\rangle- \frac 1 2 \frac{N+P-M }{(\tilde\beta_{\rm d}+\tilde\epsilon_{\beta_{\rm d}})^2}\tilde\epsilon_{\beta_{\rm d}}^2;
\end{aligned}
\label{eq:lnPboundbetad}
\end{equation}
the first transform utilizes the concavity of $\ln P({\bf d};\boldsymbol\beta)$ with fixed $\lambda$ values (eq.~\ref{eq:secondderivinbetaprimespace}), and the second to fourth ones evaluate the Taylor series using eq.~(\ref{eq:secondderivinbetaprimespace}). 
For $|\lambda-\lambda_0|>\epsilon_\lambda$, 
\begin{equation}
\begin{aligned}
    \langle\ln L(\boldsymbol\beta)\rangle
    \leq& \langle\ln L(\tilde\beta_{\rm d}(\lambda),\lambda)\rangle
    \leq
    \max_{\pm=\pm1}\langle\ln L(\tilde\beta_{\rm d}(\lambda_0\pm\epsilon_\lambda),\lambda_0\pm\epsilon_\lambda)\rangle
    \\\leq&
    \langle\ln L(\boldsymbol\beta_0)\rangle
    +\max_{\Delta\lambda\leq\epsilon_\lambda} \frac{d^2\langle\ln L(\tilde\beta_{\rm d}(\lambda_0+\Delta\lambda^\prime),\lambda_0+\Delta\lambda^\prime)\rangle}{d(\Delta\lambda^\prime)^2}\frac{\epsilon_\lambda^2}{2}
    \\\leq&
    \langle\ln L(\boldsymbol\beta_0)\rangle
    -\frac 1 2 \frac{c_{\rm det}(\lambda_0)c_{\rm TS}^\prime(N+P-M)}{2\lambda_0^2}\epsilon_\lambda^2,
\end{aligned}
\label{eq:lnPboundlambda}
\end{equation}
where
\begin{equation}
\begin{aligned}
    c_{\rm TS}^\prime&:=\max_{\Delta\lambda\leq\epsilon_\lambda} \frac{d^2\langle\ln L(\tilde\beta_{\rm d}(\lambda_0+\Delta\lambda^\prime),\lambda_0+\Delta\lambda^\prime)\rangle}{d(\Delta\lambda^\prime)^2}\left/\frac{d^2\langle\ln L(\tilde\beta_{\rm d}(\lambda),\lambda)\rangle}{d\lambda^2}\right|_{\lambda=\lambda_0}
    \\&=1+\mathcal O(\epsilon_{\lambda});
\end{aligned}
\end{equation}
the first transform utilizes the concavity of $\langle\ln P({\bf d};\boldsymbol\beta)\rangle$ with fixed $\lambda$ values (eq.~\ref{eq:secondderivinbetaprimespace}), 
the second transform utilizes the monotonicity of $\langle\ln P({\bf d};\tilde\beta_{\rm d},\lambda)\rangle$ within $\lambda<\lambda_0$ and $\lambda>\lambda_0$, derived from the uniqueness of the stationary point in $\langle\ln P({\bf d};\tilde\beta_{\rm d},\lambda)\rangle$ (eq.~\ref{eq:uniquestationarity_lambda}), and the left part evaluates the Taylor series with eq.~(\ref{eq:differentiatedprofilelikelihoodlikefunc}). 
The following differentiated form of  eq.~(\ref{eq:differentiatedprofilelikelihoodlikefunc}) may be useful: 
\begin{equation}
    \begin{aligned}
    &
    \partial^2_\lambda\langle\ln P({\bf d};\tilde \beta_{\rm d},\lambda)\rangle_{\boldsymbol\beta_0}
    \\=&
    \{2-{\rm Tr}[{\bf D}({\bf I}-{\bf D}){\bf D}_0^\dag]/{\rm Tr}[{\bf D}{\bf D}_0^\dag]\}\partial_\lambda\langle\ln P({\bf d};\tilde \beta_{\rm d},\lambda)\rangle_{\boldsymbol\beta_0}/\lambda
    \\&-
    \frac{N^\prime}{2\lambda^2}\frac{2{\rm Tr}[{\bf D}^3{\bf D}_0^\dag]-{\rm Tr}[{\bf D}^2{\bf D}_0^\dag]{\rm Tr}[{\bf D}]/N^\prime-{\rm Tr}[{\bf D}{\bf D}_0^\dag]{\rm Tr}[{\bf D}^2]/N^\prime}{{\rm Tr}[{\bf D}{\bf D}_0^\dag]}
    \\
    \to&-
    \frac{N^\prime c_{\rm det}}{2\lambda^2}\hspace{4pt} (\lambda\to\lambda_0).
    \end{aligned}
\end{equation}


\pagestyle{empty}

We use the above inequalities with mapping $|\boldsymbol\beta-\boldsymbol\beta_0|>\epsilon$ to its superset rectangle in the $\boldsymbol\beta^\prime$ space $S_{\beta^\prime}(\epsilon_{\beta_{\rm d}},\epsilon_\lambda):=|\beta_{\rm d}-\beta_{\rm d0}|>\epsilon_{\beta_{\rm d}} \cup|\lambda-\lambda_0|>\epsilon_{\lambda}$. 
The sides $\epsilon_{\beta_{\rm d}}$ and $\epsilon_\lambda$ of the rectangle $S_{\beta^\prime}(\epsilon_{\beta_{\rm d}},\epsilon_\lambda)$ are constrained as follows: 
\begin{equation}
    \begin{aligned}
    &|\boldsymbol\beta-\boldsymbol\beta_0|>\epsilon\in 
    |\beta_{\rm d}-\beta_{\rm d0}|>\epsilon_{\beta_{\rm d}} \cup|\lambda-\lambda_0|>\epsilon_{\lambda}
    \\
    \Leftarrow&
    |\boldsymbol\beta-\boldsymbol\beta_0|\leq \epsilon\ni 
    (\beta_{\rm d},\lambda)=(\beta_{\rm d0}+\epsilon_{\beta_{\rm d}},\lambda_0+\epsilon_{\lambda})
    \\
    \Leftrightarrow&
    \epsilon^2\geq \epsilon_{\beta_{\rm d}}^2+(\lambda_0\epsilon_{\beta_{\rm d}}+\beta_{\rm d0}\epsilon_\lambda+\epsilon_{\beta_{\rm d}}\epsilon_\lambda)^2.
    \end{aligned}
    \label{eq:betacirclebetaprimerectanglemapping}
\end{equation}
The first transform requires the apex of the quadrangle $|\beta_{\rm d}-\beta_{\rm d0}|=\epsilon_{\beta_{\rm d}} \cap|\lambda-\lambda_0|=\epsilon_{\lambda}$ farthest from $\boldsymbol\beta=\boldsymbol\beta_0$ to be within the $\epsilon$-neighborhood of $\boldsymbol\beta_0$. 
Then, we evaluate the $\ln P({\bf d};\boldsymbol\beta)$ value in $S_{\beta^\prime}$ separately for $|\lambda-\lambda_0|>\epsilon_\lambda$ where ineq.~(\ref{eq:lnPboundlambda}) holds and 
$|\beta_{\rm d}-\beta_{\rm d0}|>\epsilon_{\beta_{\rm d}}\cap |\lambda-\lambda_0|\leq\epsilon_\lambda$. 
The latter case 
$|\beta_{\rm d}-\beta_{\rm d0}|>\epsilon_{\beta_{\rm d}}\cap |\lambda-\lambda_0|\leq\epsilon_\lambda$ utilizes
the $\beta_{\rm d}$ bound $|\beta_{\rm d}-\tilde \beta_{\rm d}|>\tilde \epsilon_{\beta_{\rm d}}$, considering
\begin{equation}
\begin{aligned}
    |\beta_{\rm d}-\tilde \beta_{\rm d}|
    \geq& |\beta_{\rm d}-\beta_{\rm d0}|-|\tilde \beta_{\rm d}-\beta_{\rm d0}|
    \\
    >&\epsilon_{\beta_{\rm d}}-\max_{|\lambda-\lambda_0|\leq\epsilon_\lambda} \left|\frac{\partial\tilde\beta_{\rm d}}{\partial\lambda}\right|\epsilon_\lambda,
\end{aligned}
\end{equation}
which allows us to use ineq.~(\ref{eq:lnPboundbetad}) with the following form of $\tilde \epsilon_{\beta_{\rm d}}$:
\begin{equation}
    \tilde \epsilon_{\beta_{\rm d}}=\epsilon_{\beta_{\rm d}}-\max_{|\lambda-\lambda_0|\leq\epsilon_\lambda} \left|\frac{\partial\tilde\beta_{\rm d}}{\partial\lambda}\right|\epsilon_\lambda>0.
\label{eq:defoftildeepsilonbetad}
\end{equation}
The use of the above $\tilde \epsilon_{\beta_{\rm d}}$ (eq.~\ref{eq:defoftildeepsilonbetad}) constrains the $\epsilon_{\beta_{\rm d}}$ and $\epsilon_\lambda$ values,
and here we adopt 
\begin{equation}
    \epsilon_\lambda =\frac 1 2 \max_{|\lambda-\lambda_0|\leq\epsilon_\lambda}\left|\frac{d \tilde \beta_{\rm d}}{d \lambda}\right|^{-1}\epsilon_{\beta_{\rm d}}.
    \label{eq:selectedepsilonlambda}
\end{equation}
For this $\epsilon_\lambda$ (eq.~\ref{eq:selectedepsilonlambda}) with $\epsilon_{\rm \beta_{\rm d}}<\beta_{\rm d0}$, logical formula~(\ref{eq:betacirclebetaprimerectanglemapping}) bounds $\epsilon_{\beta_{\rm d}}$ as 
\begin{equation}
\begin{aligned}
    &
    \epsilon_{\beta_{\rm d}}/\epsilon\leq  \left(1+\left(\lambda_0+\frac{\beta_{\rm d0}+\epsilon_{\beta_{\rm d}}}{2\max_{|\lambda-\lambda_0|\leq\epsilon_\lambda}\left|\frac{d \tilde \beta_{\rm d}}{d \lambda}\right|}\right)^2\right)^{-1/2}
    \\
    \Leftarrow&
    \epsilon_{\beta_{\rm d}}/\epsilon\leq  \left(1+\left(\lambda_0+\frac{\beta_{\rm d0}}{\max_{|\lambda-\lambda_0|\leq\epsilon_\lambda}\left|\frac{d \tilde \beta_{\rm d}}{d \lambda}\right|}\right)^2\right)^{-1/2}.
\end{aligned}
\end{equation}
Here we select
\begin{equation}
    \epsilon_{\beta_{\rm d}}=  \left(1+\lambda_0+\beta_{\rm d0}\max_{|\lambda-\lambda_0|\leq\epsilon_\lambda}\left|\frac{d \tilde \beta_{\rm d}}{d \lambda}\right|^{-1}\right)^{-1}\frac {\epsilon}{\sqrt 2}.
\end{equation}
Further using eqs.~(\ref{eq:defoftildeepsilonbetad}) and (\ref{eq:selectedepsilonlambda}),
we have
\begin{equation}
\begin{aligned}
    \tilde\epsilon_{\beta_{\rm d}}&=  \left(1+\lambda_0+\beta_{\rm d0}\max_{|\lambda-\lambda_0|\leq\epsilon_\lambda}\left|\frac{d \tilde \beta_{\rm d}}{d \lambda}\right|^{-1}\right)^{-1}\frac {\epsilon}{2\sqrt 2}
    \\
    \epsilon_{\lambda}&=  \left(\beta_{\rm d0}+(1+\lambda_0)\max_{|\lambda-\lambda_0|\leq\epsilon_\lambda}\left|\frac{d \tilde \beta_{\rm d}}{d \lambda}\right|\right)^{-1}\frac {\epsilon}{2\sqrt 2}.
\end{aligned}
\label{eq:selectedepsilonbetadlambda_unrounded}
\end{equation}
For brevity, we last round $\tilde\epsilon_{\beta_{\rm d}}$ and $\epsilon_{\lambda}$: 
\begin{equation}
\begin{aligned}
    \tilde\epsilon_{\beta_{\rm d}}&=  \left(1+\lambda_0+\beta_{\rm d0}\left|\frac{d \tilde \beta_{\rm d}}{d \lambda}\right|_{\lambda=\lambda_0}^{-1}\right)^{-1}\frac {\epsilon}{3}
    \\
    \epsilon_{\lambda}&=  \left(\beta_{\rm d0}+(1+\lambda_0)\left|\frac{d \tilde \beta_{\rm d}}{d \lambda}\right|_{\lambda=\lambda_0}\right)^{-1}\frac {\epsilon}{3},
\end{aligned}
\label{eq:selectedepsilonbetadlambda}
\end{equation}
where we consider sufficiently small $\epsilon$ for the replacement of the denominator, such that
\begin{equation}
\begin{aligned}
    &\frac {2\sqrt 2}{3}\leq \left(1+\lambda_0+\beta_{\rm d0}\left|\frac{d \tilde \beta_{\rm d}}{d \lambda}\right|_{\lambda=\lambda_0}^{-1}\right)\left(1+\lambda_0+\beta_{\rm d0}\max_{|\lambda-\lambda_0|\leq\epsilon_\lambda}\left|\frac{d \tilde \beta_{\rm d}}{d \lambda}\right|^{-1}\right)^{-1}
    \left(=1+\mathcal O(\epsilon)\right)
    \\
    &\frac {2\sqrt 2}{3}\leq 
    \left(\beta_{\rm d0}+(1+\lambda_0)\left|\frac{d \tilde \beta_{\rm d}}{d \lambda}\right|_{\lambda=\lambda_0}\right)\left(\beta_{\rm d0}+(1+\lambda_0)\max_{|\lambda-\lambda_0|\leq\epsilon_\lambda}\left|\frac{d \tilde \beta_{\rm d}}{d \lambda}\right|\right)^{-1}
    \left(=1+\mathcal O(\epsilon)\right),
\end{aligned}
\end{equation}
which ensures
$S_{\beta^\prime}(\epsilon_{\beta_{\rm d}},\epsilon_\lambda)$ of eqs.~(\ref{eq:selectedepsilonbetadlambda}) includes
$S_{\beta^\prime}(\epsilon_{\beta_{\rm d}},\epsilon_\lambda)$ of eqs.~(\ref{eq:selectedepsilonbetadlambda_unrounded}) and thus also $|\boldsymbol\beta-\boldsymbol\beta|>\epsilon$. 
Substituting eqs.~(\ref{eq:selectedepsilonbetadlambda}) 
into 
ineqs.~(\ref{eq:lnPboundbetad}) and (\ref{eq:lnPboundlambda}) 
with 
the derivative of $\tilde \beta_{\rm d}$ (eq.~\ref{eq:implicativebetadform}),
\begin{equation}
    \frac{d\tilde\beta_{\rm d}}{d\lambda}
    = 
    -\frac{\beta_{\rm d0}}{\lambda}\frac{N^\prime{\rm Tr}[({\bf I}-{\bf D}){\bf DD}_0^\dag]}{[{\rm Tr}({\bf DD}_0^\dag)]^2} 
    \to
    -\frac{\beta_{\rm d0}}{\lambda_0}\frac{{\rm Tr}[({\bf I}-{\bf D}_0){\bf D}_0{\bf D}_0^\dag]}{N^\prime}
    \hspace{4pt}(\lambda\to\lambda_0),
\end{equation}
we find ineq.~(\ref{eq:strongmodality}); 
for sufficiently small $\epsilon$, for when
$|\beta_{\rm d}-\beta_{\rm d0}|>\epsilon_{\beta_{\rm d}}\cap |\lambda-\lambda_0|\leq\epsilon_\lambda$,
\begin{equation}
    \left(\frac{\tilde\epsilon_{\beta_{\rm d}}/\epsilon}{\tilde \beta_{\rm d}+\tilde \epsilon_{\beta_{\rm d}}}\right)^2
    \geq \left(\frac{\tilde\epsilon_{\beta_{\rm d}}/\epsilon}{2\beta_{\rm d0}}\right)^2
    =\left[6\beta_{\rm d0}\left(1+\lambda_0+\frac{N^\prime \lambda_0}{{\rm Tr}[({\bf I}-{\bf D}_0){\bf D}_0{\bf D}_0^\dag]}\right)\right]^{-2},
\end{equation}
and for when $|\lambda-\lambda_0|>\epsilon_\lambda$,
\begin{equation}
    \frac{c_{\rm det}c_{\rm TS}^\prime}{2\lambda_0^2}(\epsilon_\lambda/\epsilon)^2
    \geq\frac{c_{\rm det}}{3\lambda_0^2}(\epsilon_\lambda/\epsilon)^2
    =\frac{c_{\rm det}}{27}[\beta_{a0}+(\beta_{\rm d0}+\beta_{\rm a0}){\rm Tr}[({\bf I}-{\bf D}_0){\bf D}_0{\bf D}_0^\dag]/N^\prime]^{-2}.
\end{equation}

\end{document}